\documentclass[12pt]{amsart}

\usepackage{geometry}
\usepackage{fullpage}
\usepackage{amsmath,amssymb,amsthm,mathtools}
\usepackage{enumerate}
\usepackage{graphicx}
\usepackage{hyperref}
\usepackage{xspace}

\usepackage{tikz}
\usetikzlibrary{positioning}
\usetikzlibrary{arrows.meta}
\usetikzlibrary{backgrounds, calc, fit}
\usetikzlibrary{decorations.pathreplacing}

\theoremstyle{plain}
\newtheorem{theorem}{Theorem}[section]
\newtheorem{proposition}[theorem]{Proposition}
\newtheorem{lemma}[theorem]{Lemma}
\newtheorem{corollary}[theorem]{Corollary}
\newtheorem*{claim}{Claim}  

\theoremstyle{definition}
\newtheorem{definition}[theorem]{Definition}
\newtheorem{remark}[theorem]{Remark}
\newtheorem{example}[theorem]{Example}

\newcommand{\cS}{\ensuremath{\mathcal{S}}\xspace}
\newcommand{\abs}[1]{\ensuremath{\left|#1\right|}}
\newcommand{\littleO}[1]{\ensuremath{o\left(#1\right)}}

\begin{document}

\title{Burning Signed Graphs}

\author{Shaun Fallat}
\address{Department of Mathematics and Statistics, University of Regina, Canada}
\email{shaun.fallat@uregina.ca}
\author{Himanshu Gupta}
\address{Department of Mathematics and Statistics \and Department of Computer Science, University of Regina, Canada}
\email{himanshu.gupta@uregina.ca}
\author{Kamyar Khodamoradi}
\address{Department of Computer Science, University of Regina, Canada}
\email{kamyar.khodamoradi@uregina.ca}
\author{Sandra Zilles}
\address{Department of Computer Science, University of Regina, Canada \and Alberta Machine Intelligence Institute}
\email{sandra.zilles@uregina.ca}

\begin{abstract}
We introduce and analyze a new model of graph burning, in which two competing fires (coloured yellow and green) ignite vertices of a given signed graph and propagate along its edges. The boolean sign of an edge determines whether a fire spreading along the edge changes colour or not. In each step, a player ignites a new vertex in a colour of their choice, while previously activated fires continue to spread. Given a signed graph $\Gamma$, the objective is to burn the maximum possible number of vertices in a single colour; the optimal achievable value is called the \emph{plurality number}\/ of $\Gamma$. We express the plurality number through Hamming distances to a binary code associated with $\Gamma$. In particular, the minimum plurality number over the switching class of $\Gamma$ equals the number of vertices minus the covering radius of this code.

Under certain conditions on the signature, we determine exact values of the plurality number of signed paths. By contrast, we prove hardness results for multiple variants of the problem of determining or approximating the plurality number.
\end{abstract}

\keywords{Graph burning \and Signed graphs \and Computational complexity.}

\maketitle

\section{Introduction}\label{sec:intro}
Dynamic propagation processes on graphs often lead to parameters that encode how local structures within a graph impact more global graph or network behaviour. One such process is known as {\em graph burning}, which is a discrete-time model introduced to study the spread of influence or contagion across a network. Despite its simple formulation, graph burning gives rise to a rich collection of structural, extremal, and algorithmic problems in graph theory.

In the standard graph burning process \cite{Bonato21,Bonato2016HowToBurn}, all vertices of a finite, simple, undirected graph are initially unburned. At each time step, the fire simultaneously spreads from the previously burned vertices to their unburned neighbours, and one unburned vertex is chosen as a new source and ignited. 
The minimum number of time steps required to burn all vertices of a graph $G$ is called the burning number. This parameter measures the efficiency with which expanding neighbourhoods centred at selected vertices can eventually cover all the vertices in this graph. A prominent conjecture is that the burning number of a connected graph on $n$ vertices never exceeds $\lceil\sqrt{n}\rceil$ \cite{Bonato2016HowToBurn}. Multiple studies explored the computational complexity of calculating the burning number, leading to non-trivial bounds or efficient (exact or approximation) algorithms for specific classes of graphs~\cite{Bessy2018Bounds,Bonato2014Contagion,Bonato2016HowToBurn,Bonato2019Approx} as well as a variety of hardness results~\cite{BessyBJRR17}.
Some model variations involve two players working against one another. For example, Chiarelli et al.~\cite{ChiarelliCJKM26} proposed a setting in which the burning process is slowed down by a \emph{cooling}\/ process. By contrast, in the \emph{builder and arsonist}\/ model, a builder keeps adding new vertices to the graph, while an arsonist aims to burn the graph as quickly as possible~\cite{GundersonKNP25}. 

In this paper, we propose and study a new model of graph burning, motivated by the idea of two competing groups spreading information in social networks. To model situations in which a user (a vertex) either reinforces or counteracts information shared by another user, we use the concept of a signed graph. Each edge between two vertices is signed either positively or negatively, while burned vertices are coloured either yellow or green. Information spreading from a vertex $v$ burned in colour $c$, along an edge $e$, to an uncoloured vertex $v'$ will burn $v'$ in colour $c$ if  $e$ is positive (thus reinforcing the information), and in the opposite colour if $e$ is negative (thus counteracting the information).\footnote{Special rules apply when fires of opposite colour reach an unburned vertex simultaneously; see Section~\ref{sec:model} for a formal definition.} This model gives rise to notions of two-player and one-player games and thus to various mathematically interesting parameters of a graph. Our focus is on a one-player version of the game. As in standard graph burning, starting with only unburned vertices, in each round the already burning fires spread to their neighbours while one new vertex is ignited. However,  the player now has to decide in which colour to ignite the vertex. The largest number of vertices that can be burned in the same colour is the parameter of interest to our study; we dub it the \emph{plurality number}.

Determining the plurality number of a given signed graph is in general a difficult problem. We provide exact values for a subclass of signed paths, while the problem remains open for general classes of graphs with a more complex structure. On the algorithmic side, we prove several hardness results on the problem of computing or approximating the plurality number, including variants with directed graphs and variants in which the burning process is set to terminate after a predefined number of rounds.

Of independent interest is a connection to coding theory, which we reveal through an algebraic formulation of signed graph burning. In particular, over a class $[\Gamma]$ of switching-equivalent signed graphs, the minimal plurality number can be expressed in terms of the covering radius of a binary code that we show to be associated with $[\Gamma]$.

\section{Signed Burning with Two Colours}\label{sec:model}
Let $G$ be a finite simple graph with the vertex set $V(G)$ and the edge set $E(G)$. A \emph{signed graph} $\Gamma = (G, \sigma)$ consists of a graph $G$ together with a \emph{signature} $\sigma \colon E(G) \to \{\pm 1\}$ assigning a positive or negative sign to each edge. 
The graph $G$ is called the \emph{underlying graph} of $\Gamma$. A signed graph whose signature is identically $+1$ or identically $-1$ is denoted $\Gamma_+$ or $\Gamma_-$, respectively; in particular, the unsigned graph $G$ is identified with $\Gamma_+$.
A vertex $u$ is a \emph{positive neighbour} of $v$ if $\sigma(uv) = 1$, and a \emph{negative neighbour} of $v$ if $\sigma(uv) = -1$. The sets of positive and negative neighbours of $v$ are denoted $N^+(v)$ and $N^-(v)$, respectively. The \emph{sign} of a path $u_1 u_2 \cdots u_k$ is the product $\prod_{i=1}^{k-1} \sigma(u_i u_{i+1})$; the path is called \emph{positive} or \emph{negative} according to whether its sign is $+1$ or $-1$. The same terminology applies to cycles. A signed graph is \emph{balanced} if every cycle is positive and \emph{unbalanced} otherwise.

Signed graphs were introduced by Harary \cite{Harary1953}. Originally used to model structural balance in social groups \cite{CartwrightHarary1956}, they have since found applications in many areas. In statistical physics, unbalanced cycles correspond to frustration in spin glass models \cite{Toulouse1977}. In data science, signed graphs underlie correlation clustering \cite{BBC2004} and the study of social networks with positive and negative ties \cite{TCAL2016}. They also have a rich spectral theory \cite{BCKW2018}. We refer to \cite{Zaslavsky1982} for further reading.

Given a signed graph $\Gamma = (G, \sigma)$ and a subset $U \subseteq V(G)$, the \emph{switching} of $\Gamma$ by $U$ is the resulting signed graph $\Gamma^U = (G, \sigma^U)$, where
\[
\sigma^U(uv) =
\begin{cases}
\phantom{-}\sigma(uv), &\text{if } u, v \in U \text{ or } u, v \notin U,\\
-\sigma(uv), &\text{otherwise}.
\end{cases}
\]
Two signed graphs $\Gamma = (G, \sigma)$ and $\Gamma' = (G, \sigma')$ are called \emph{switching equivalent}, written $\Gamma \sim \Gamma'$, if there exists a subset $U \subseteq V(G)$ such that $\Gamma' = \Gamma^U$. Switching equivalence is an equivalence relation on the set of all signatures of $G$. A well-known result characterizes the property of being balanced in terms of switching: a signed graph $\Gamma$ is balanced if and only if it is switching equivalent to $\Gamma_+$
(see \cite{Harary1953} and \cite[Corollary~3.3]{Zaslavsky1982}). Since a forest contains no cycles, it is evident that every signed forest is balanced and therefore is switching equivalent to $\Gamma_+$.

We now define a burning process on a signed graph in which the fire carries one of two colours, yellow and green.\footnote{The choice of these two colours is a nod to the University of Regina.}

\begin{definition}[Signed Graph Burning Process]\label{defn:signed_burning}
Let $\Gamma = (G, \sigma)$ be a signed graph. At any given time, each vertex is in exactly one of three states: unburned, burned yellow, or burned green. Initially ($t=0$) every vertex is unburned; once a vertex is burned (yellow or green), it remains in that state. For each time $t\geq 1$, the following two steps are performed in order.
\begin{enumerate}
    \item \textbf{Spread:} For each unburned vertex $v$, compute the yellow influence
    $$s_t^{y}(v) = |\{u \in N^{+}(v) : u \text{ is yellow}\}| + |\{u \in N^{-}(v) : u \text{ is green}\}|,$$
    and the green influence
    $$s_t^{g}(v) = |\{u \in N^{+}(v) : u \text{ is green}\}| + |\{u \in N^{-}(v) : u \text{ is yellow}\}|.$$
 Then update simultaneously every unburned vertex $v$ as follows:
    \begin{enumerate}
        \item burn $v$ yellow if $s_t^{y}(v) > s_t^{g}(v)$,
        \item burn $v$ green if $s_t^{y}(v) < s_t^{g}(v)$,
        \item leave $v$ unburned if $s_t^{y}(v) = s_t^{g}(v)$.
    \end{enumerate}
    \item \textbf{Ignition:} Pick an unburned vertex, if one exists, and burn it yellow or green.
\end{enumerate}
This process terminates when every vertex is either burned yellow or burned green.
\end{definition}

\begin{remark}[Arithmetic form of the process]\label{rem:arithmetic}
Identify yellow with $1$, green with $-1$, unburned with $0$, and let $\nu_t \colon V(G) \to \{0,1,-1\}$ record the state of every vertex at the end of time $t$, so that $\nu_0 \equiv 0$. A burned neighbour $u$ of $v$ contributes to $s_t^{y}(v)$ iff $\sigma(vu)\nu_{t-1}(u) = 1$ and to $s_t^{g}(v)$ iff $\sigma(vu)\nu_{t-1}(u) = -1$, while an unburned neighbour contributes
$\sigma(vu)\nu_{t-1}(u) = 0$ to neither. Hence, writing
\[
  h_t(v) \; :=\; \sum\nolimits_{u \in N(v)} \sigma(vu)\,\nu_{t-1}(u),
\]
we have $h_t(v) = s_t^{y}(v) - s_t^{g}(v)$ for unburned $v$, and the spread step results in 
\[
  \nu_t(v) \;=\;
  \begin{cases}
    \nu_{t-1}(v), & \text{if } \nu_{t-1}(v) \neq 0,\\[2pt]
    \operatorname{sgn} (h_t(v)), & \text{if } \nu_{t-1}(v) = 0,
  \end{cases}
\]
with 
$\operatorname{sgn} (0) := 0$. The ignition step then changes a single remaining $0$ to either $1$ or $-1$, producing $\nu_t$. The process ends once $\nu_T \in \{\pm 1\}^{V(G)}$ for some~$T$.
\end{remark}

A complete signed burning process is encoded by a \emph{burning sequence} $$\mathcal{I} = \bigl(v_1^{c_1},\, v_2^{c_2},\, \ldots,\, v_k^{c_k}\bigr),$$ where $v_t \in V(G)$ is the vertex ignited at time $t$, $c_t \in \{y, g\}$ is its colour, and every vertex is burned by the end of the process. The \emph{termination time} $T=T(\mathcal{I})$ is the first time at the end of which every vertex is burned. The spread step precedes the ignition step, so the final round may have no vertex left to ignite. Hence $k=|\mathcal{I}|$ equals $T$ or $T-1$. 

For $0\leq t \leq T$, let $\nu_{\mathcal{I},t}$ be the \emph{configuration} of $\mathcal{I}$ at time $t$, that is, the function $\nu_t$ of Remark~\ref{rem:arithmetic} determined by $\mathcal{I}$. Let $Y_{\mathcal{I},t}=\{v:\nu_{\mathcal{I},t}(v)=1\}$ and $G_{\mathcal{I},t}=\{v:\nu_{\mathcal{I},t}(v)=-1\}$ be the sets of vertices burned yellow and burned green at the end of time $t$. When $t = T(\mathcal{I})$ is clear from context we drop the second subscript and write  $\nu_{\mathcal{I}}$, $\mathcal{Y}_{\mathcal{I}}$ and $\mathcal{G}_{\mathcal{I}}$. Thus $\nu_{\mathcal{I}} \in \{\pm 1\}^{V(G)}$, and we call it the \emph{final colouring} of $\mathcal{I}$; $\{\mathcal{Y}_{\mathcal{I}}, \mathcal{G}_{\mathcal{I}}\}$ is a partition of $V(G)$, possibly with one part empty. We define  $\nu_{\mathcal{I},t} = \nu_{\mathcal{I}}$, $\mathcal{Y}_{\mathcal{I},t}=\mathcal{Y}_{\mathcal{I}}$ and $\mathcal{G}_{\mathcal{I},t} = \mathcal{G}_{\mathcal{I}}$ for all $T<t\leq |V(G)|$. Let $\mathcal{B}(\Gamma)$ denote the set of all burning sequences of $\Gamma$.  We define $\mathcal{R}(\Gamma) \;:=\; \bigl\{\, \nu_{\mathcal{I}} \ :\
\mathcal{I} \in \mathcal{B}(\Gamma) \,\bigr\} \;\subseteq\; \{\pm 1\}^{V(G)}$ to be the set of \emph{reachable colourings} of $\Gamma$.

For $U \subseteq V(G)$ define $\Phi_U \in \{\pm 1\}^{V(G)}$ by $\Phi_U(v) = -1$ if $v \in U$ and $\Phi_U(v) = 1$ otherwise. So the switched signature $\sigma^U$ satisfies
\begin{equation}\label{eqn:switch_product}
  \sigma^U(vu) \;=\; \Phi_U(v)\,\Phi_U(u)\,\sigma(vu).
\end{equation}
Products of vectors below are taken coordinate-wise, and we define $\Phi_U \cdot \mathcal{M} := \{\Phi_U \cdot \nu : \nu \in \mathcal{M}\}$ for a set $\mathcal{M} \subseteq \{\pm 1\}^{V(G)}$.

Let $\mathcal{I}=(v_1^{c_1},\ldots,v_k^{c_k})$ be a burning sequence of $\Gamma$, and let $U\subseteq V(G)$. The \emph{switched burning sequence}
$\mathcal{I}^U=(v_1^{c'_1},\ldots,v_k^{c'_k})$ ignites the same vertices. Its colours are $c'_t=\bar c_t$ if $v_t\in U$, and $c'_t=c_t$ if $v_t\notin U$. Here $\bar{y} = g$ and $\bar{g} = y$. Equivalently, $c_t' = \Phi_U(v_t)\,c_t$. 

The following useful lemma is proven by induction on $t$.

\begin{lemma}\label{lem:switching_burning}
Let $\Gamma = (G,\sigma)$ be a signed graph, $U \subseteq V(G)$, and $\mathcal{I} \in \mathcal{B}(\Gamma)$. Then $\mathcal{I}^U \in \mathcal{B}(\Gamma^U)$, $T(\mathcal{I}^U) = T(\mathcal{I})$, and  for all $0 \leq t \leq T = T(\mathcal{I})$,
\begin{equation}\label{eqn:switching_set_identities}
  \nu_{\mathcal{I}^U\!,\,t} \;=\; \Phi_U \cdot \nu_{\mathcal{I},t}.
\end{equation}
Equivalently,
$
\mathcal{Y}_{\mathcal{I}^U\!,\,t} = \bigl(\mathcal{Y}_{\mathcal{I},t} \setminus U\bigr) \cup \bigl(\mathcal{G}_{\mathcal{I},t} \cap U\bigr)$, and
$
\mathcal{G}_{\mathcal{I}^U\!,\,t} = \bigl(\mathcal{G}_{\mathcal{I},t} \setminus U\bigr) \cup \bigl(\mathcal{Y}_{\mathcal{I},t} \cap U\bigr).
$
\end{lemma}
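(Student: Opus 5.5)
We argue by induction on $t$ that $\nu_{\mathcal{I}^U,t} = \Phi_U\cdot\nu_{\mathcal{I},t}$ for all $0\le t\le T$, where $\nu_{\mathcal{I}^U,t}$ denotes the configuration produced by running the process on $\Gamma^U$ with ignitions $\mathcal{I}^U$. The base case $t=0$ is immediate: both configurations are identically $0$, and $\Phi_U\cdot 0 = 0$.

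For the inductive step, assume the identity at time $t-1$, and first treat the spread step using the arithmetic form of Remark~\ref{rem:arithmetic}. For a vertex $v$, the local field in $\Gamma^U$ is
\[
h^U_t(v)=\sum_{u\in N(v)}\sigma^U(vu)\,\Phi_U(u)\nu_{\mathcal{I},t-1}(u).
\]
By \eqref{eqn:switch_product}, $\sigma^U(vu)=\Phi_U(v)\Phi_U(u)\sigma(vu)$, and $\Phi_U(u)^2=1$, so
\[
h^U_t(v)=\Phi_U(v)\,h_t(v).
\]
Moreover $\Phi_U(v)\nu_{\mathcal{I},t-1}(v)=0$ if and only if $\nu_{\mathcal{I},t-1}(v)=0$. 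Hence the same vertices are unburned in both processes. Already-burned vertices keep their values, which by induction are $\Phi_U$ times the original ones. An unburned $v$ receives $\operatorname{sgn}(\Phi_U(v)h_t(v))=\Phi_U(v)\operatorname{sgn}(h_t(v))$, because $\Phi_U(v)=\pm1$ and $\operatorname{sgn}(0)=0$. So after the spread step the identity holds.

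Next comes the ignition step. Because the sets of unburned vertices coincide, the vertex $v_t$ is unburned in the switched process exactly when it is unburned in the original one, so the ignition is legal. It receives colour $c_t'=\Phi_U(v_t)c_t$, which preserves the identity. The only bookkeeping point is that the ignition at time $t$ must be checked against the state after the spread step, and the coincidence of zero sets gives exactly this.

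It follows that $\mathcal{I}^U$ is a valid sequence of ignitions on $\Gamma^U$. Since the zero sets agree at every time, both processes first become fully burned at the same time, so $\mathcal{I}^U\in\mathcal{B}(\Gamma^U)$ and $T(\mathcal{I}^U)=T(\mathcal{I})$. Finally, the set identities are a direct reading of \eqref{eqn:switching_set_identities} coordinatewise: $\Phi_U$ flips a colour exactly on the vertices of $U$.

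I expect no real obstacle here. The only care needed is the sign identity $\operatorname{sgn}(\Phi_U(v)h)=\Phi_U(v)\operatorname{sgn}(h)$ and the observation that the zero sets agree, which is what justifies both the legality of each ignition and the equality of the termination times.
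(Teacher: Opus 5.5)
Your proposal is correct and follows the paper's proof essentially step for step. Both argue by induction on $t$ via the identity $h^U_t(v)=\Phi_U(v)\,h_t(v)$, use that the zero sets coincide to make each ignition legal at the same rounds, and use the oddness of $\operatorname{sgn}$ to get equal termination times and the coordinatewise set identities.
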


\begin{proof}
Write $\nu_t = \nu_{\mathcal{I},t}$ for the process $\mathcal{I}$ on $\Gamma$ and
$\nu_t' = \nu_{\mathcal{I}^U\!,\,t}$ for the process $\mathcal{I}^U$ on $\Gamma^U$, and let
$h_t$ and $h_t'$ be the corresponding influence sums from Remark~\ref{rem:arithmetic}. We prove
\eqref{eqn:switching_set_identities} by induction on $t$, the case $t = 0$ being clear since
$\nu_0 = \nu_0' \equiv 0$. Assume $\nu_{t-1}' = \Phi_U \cdot \nu_{t-1}$ for some $t \geq 1$. By
\eqref{eqn:switch_product} and $\Phi_U(u)^2 = 1$,
\begin{align*}  
h_t'(v) &= \sum_{u \in N(v)} \sigma^U(vu)\,\nu_{t-1}'(u) = \sum_{u \in N(v)} \Phi_U(v)\Phi_U(u)\sigma(vu)\,\Phi_U(u)\nu_{t-1}(u)\\
&= \Phi_U(v)\,h_t(v).
\end{align*}

A vertex is unburned in one process exactly when it is unburned in the other, since
$\nu_{t-1}'(v) = 0$ if and only if $\nu_{t-1}(v) = 0$. If $\nu_{t-1}(v) \neq 0$ then both
processes keep the colour of $v$, so $\nu_t'(v) = \nu_{t-1}'(v) = \Phi_U(v)\nu_{t-1}(v) = \Phi_U(v)\nu_t(v)$ by the induction hypothesis. If $\nu_{t-1}(v) = 0$ the spread step gives $\nu_t'(v) = \operatorname{sgn} (h_t'(v)) = \Phi_U(v)\operatorname{sgn} (h_t(v)) = \Phi_U(v)\nu_t(v)$, because $\operatorname{sgn}$ is an odd function. In particular, the two processes have the same unburned set after the spread step, so a vertex is available for ignition in one precisely when it is available in the other. If an ignition occurs at time $t$ it is at the same vertex $v_t$ in both processes, receiving the colour $c_t$ in $\mathcal{I}$ and $\Phi_U(v_t)c_t$ in
$\mathcal{I}^U$, which again agrees with \eqref{eqn:switching_set_identities}. This completes the induction. Consequently the two processes burn their last vertex at the same time and ignite at the same
rounds, so $T(\mathcal{I}^U) = T(\mathcal{I})$ and
$\mathcal{I}^U \in \mathcal{B}(\Gamma^U)$. Moreover, the equivalence follows immediately.
\end{proof}

\begin{corollary}\label{cor:R_switching}
For every signed graph $\Gamma$ and every $U \subseteq V(G)$ we have
$\mathcal{R}(\Gamma^U) = \Phi_U \cdot \mathcal{R}(\Gamma)$. In particular
$\mathcal{R}(\Gamma) = -\mathcal{R}(\Gamma)$.
\end{corollary}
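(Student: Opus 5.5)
The plan is to read the corollary straight off Lemma~\ref{lem:switching_burning}, proving the two inclusions separately and then specializing to $U = V(G)$.

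For the inclusion $\Phi_U \cdot \mathcal{R}(\Gamma) \subseteq \mathcal{R}(\Gamma^U)$, take any $\nu \in \mathcal{R}(\Gamma)$ and write it as $\nu = \nu_{\mathcal{I}}$ for some $\mathcal{I} \in \mathcal{B}(\Gamma)$. The lemma gives $\mathcal{I}^U \in \mathcal{B}(\Gamma^U)$ and $T(\mathcal{I}^U) = T(\mathcal{I}) = T$. Applying \eqref{eqn:switching_set_identities} at $t = T$ then yields $\nu_{\mathcal{I}^U} = \Phi_U \cdot \nu_{\mathcal{I}}$, so $\Phi_U\cdot\nu \in \mathcal{R}(\Gamma^U)$.

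For the reverse inclusion, the key observation is that switching is an involution: $(\Gamma^U)^U = \Gamma$. This holds because $\sigma^U$ flips exactly the edges with one endpoint in $U$, and flipping them twice restores $\sigma$; equivalently, apply \eqref{eqn:switch_product} twice and use $\Phi_U^2 \equiv 1$. Applying the first inclusion to the signed graph $\Gamma^U$ with the same set $U$ gives $\Phi_U\cdot\mathcal{R}(\Gamma^U) \subseteq \mathcal{R}(\Gamma)$. Multiplying coordinatewise by $\Phi_U$ then gives $\mathcal{R}(\Gamma^U)\subseteq \Phi_U\cdot\mathcal{R}(\Gamma)$, and the two inclusions together give equality.

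For the particular case, take $U = V(G)$. Then every edge has both endpoints in $U$, so $\sigma^{V(G)} = \sigma$ and $\Gamma^{V(G)} = \Gamma$. Since $\Phi_{V(G)} \equiv -1$, the equality just proved becomes $\mathcal{R}(\Gamma) = -\mathcal{R}(\Gamma)$.

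There is no real obstacle here; the only point needing care is matching the final colouring with the configuration at time $T(\mathcal{I}^U)$. This is exactly what the equality of termination times in the lemma provides.
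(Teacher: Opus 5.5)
Your proposal is correct and follows essentially the paper's proof: both get one inclusion from Lemma~\ref{lem:switching_burning} at $t=T$, get the reverse by applying the lemma again via the involution ($(\mathcal{I}^U)^U=\mathcal{I}$ in the paper, $(\Gamma^U)^U=\Gamma$ in yours), and then specialize to $U=V(G)$, where $\Phi_U\equiv -1$ and $\Gamma^{V(G)}=\Gamma$. Phrasing the involution on the graph rather than on burning sequences is only a cosmetic difference.
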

\begin{proof}
The map $\mathcal{I} \mapsto \mathcal{I}^U$ is a bijection $\mathcal{B}(\Gamma) \to \mathcal{B}(\Gamma^U)$, since $(\mathcal{I}^U)^U = \mathcal{I}$ and Lemma~\ref{lem:switching_burning} applies for both implications; the first claim is now \eqref{eqn:switching_set_identities} at $t = T$. Taking $U = V(G)$ gives $\Phi_U \equiv -1$, and $\Gamma^{V(G)} = \Gamma$ by \eqref{eqn:switch_product}, whence $\mathcal{R}(\Gamma) = -\mathcal{R}(\Gamma)$.
\end{proof}

\begin{definition}[Signed Burning Parameters]\label{defn:parameters}
Let $\Gamma = (G,\sigma)$ be a signed graph. Then the signed burning number is $b(\Gamma)= \min_{\mathcal{I}\,\in\,\mathcal{B}(\Gamma)} T(\mathcal{I})$ and the plurality number is $Y(\Gamma) = \max_{\mathcal{I}\,\in\,\mathcal{B}(\Gamma)}\left(\max\bigl\{|\mathcal{Y}_{\mathcal{I}}|,\, |\mathcal{G}_{\mathcal{I}}|\bigr\}\right)$.
\end{definition}

With the aid of Corollary~\ref{cor:R_switching}, we have the following lemma.

\begin{lemma}\label{lem:symmetry}
For any signed graph $\Gamma$ on $n$ vertices,
\[
    Y(\Gamma)
    \;=\; \max_{\mathcal{I}\,\in\,\mathcal{B}(\Gamma)} |\mathcal{Y}_{\mathcal{I}}|
    \;=\; \max_{\mathcal{I}\,\in\,\mathcal{B}(\Gamma)} |\mathcal{G}_{\mathcal{I}}|
    \;\geq\; \lceil n/2 \rceil.
\]
\end{lemma}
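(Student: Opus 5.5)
The plan is to use the global colour-flip symmetry from Corollary~\ref{cor:R_switching} with $U = V(G)$. Switching by $V(G)$ leaves the signature unchanged, so $\Gamma^{V(G)} = \Gamma$, and $\Phi_{V(G)} \equiv -1$. Lemma~\ref{lem:switching_burning} then says that for every $\mathcal{I} \in \mathcal{B}(\Gamma)$, the sequence $\mathcal{I}^{V(G)}$ is again a burning sequence of $\Gamma$. Its final colouring is $-\nu_{\mathcal{I}}$, which gives
\[
\mathcal{Y}_{\mathcal{I}^{V(G)}} = \mathcal{G}_{\mathcal{I}}, \qquad \mathcal{G}_{\mathcal{I}^{V(G)}} = \mathcal{Y}_{\mathcal{I}} .
\]
The map $\mathcal{I} \mapsto \mathcal{I}^{V(G)}$ is an involution on $\mathcal{B}(\Gamma)$. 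It therefore exchanges the multisets $\{|\mathcal{Y}_{\mathcal{I}}|\}$ and $\{|\mathcal{G}_{\mathcal{I}}|\}$, so $\max_{\mathcal{I}} |\mathcal{Y}_{\mathcal{I}}| = \max_{\mathcal{I}} |\mathcal{G}_{\mathcal{I}}|$.

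For the first equality, note that $Y(\Gamma)$ is the maximum over $\mathcal{I}$ of $\max\{|\mathcal{Y}_{\mathcal{I}}|, |\mathcal{G}_{\mathcal{I}}|\}$. A maximum of maxima can be rearranged as
\[
Y(\Gamma) = \max\Bigl\{ \max_{\mathcal{I}} |\mathcal{Y}_{\mathcal{I}}|,\; \max_{\mathcal{I}} |\mathcal{G}_{\mathcal{I}}| \Bigr\}.
\]
Since the two inner maxima are equal by the previous paragraph, both coincide with $Y(\Gamma)$. All maxima here are over a finite nonempty set. Finiteness holds because there are finitely many sequences of length at most $n$. Nonemptiness holds because one can always ignite some unburned vertex each round, and the process then terminates within $n$ rounds.

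For the lower bound, pick any $\mathcal{I} \in \mathcal{B}(\Gamma)$. Its final colouring partitions $V(G)$ into $\mathcal{Y}_{\mathcal{I}}$ and $\mathcal{G}_{\mathcal{I}}$, so $|\mathcal{Y}_{\mathcal{I}}| + |\mathcal{G}_{\mathcal{I}}| = n$. Hence $\max\{|\mathcal{Y}_{\mathcal{I}}|, |\mathcal{G}_{\mathcal{I}}|\} \ge n/2$, and since this maximum is an integer it is at least $\lceil n/2 \rceil$.

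There is no real obstacle. The only points needing care are checking that $\mathcal{B}(\Gamma)$ is nonempty and recognising that switching by all of $V(G)$ fixes $\Gamma$; both are already recorded in Corollary~\ref{cor:R_switching}.
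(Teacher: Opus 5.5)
Your proposal is correct and matches the paper's proof: the global flip $U=V(G)$ from Corollary~\ref{cor:R_switching} exchanges the yellow and green sets, which makes the two maxima equal, and $|\mathcal{Y}_{\mathcal{I}}|+|\mathcal{G}_{\mathcal{I}}|=n$ gives the lower bound. You state the involution on burning sequences rather than on $\mathcal{R}(\Gamma)$, and you spell out the max-of-max rearrangement and the nonemptiness of $\mathcal{B}(\Gamma)$, which the paper leaves implicit.
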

\begin{proof}
By Corollary~\ref{cor:R_switching}, $\nu \mapsto -\nu$ is a bijection of $\mathcal{R}(\Gamma)$ onto itself,  exchanging $\{v : \nu(v) = 1\}$ with $\{v : \nu(v) = -1\}$. Hence $\max_{\mathcal{I}} |\mathcal{Y}_{\mathcal{I}}| = \max_{\mathcal{I}} |\mathcal{G}_{\mathcal{I}}|=Y(\Gamma)$.
The bound $Y(\Gamma) \geq \lceil n/2 \rceil$ follows since $|\mathcal{Y}_{\mathcal{I}}| + |\mathcal{G}_{\mathcal{I}}| = n$ for every $\mathcal{I} \in \mathcal{B}(\Gamma)$.
\end{proof}

By Lemma~\ref{lem:symmetry}, we adopt $Y(\Gamma) = \max_{\mathcal{I}\,\in\,\mathcal{B}(\Gamma)} |\mathcal{Y}_{\mathcal{I}}| 
\ = \max_{\nu \in \mathcal{R}(\Gamma)} |\{v : \nu(v) = 1\}|$; that is, the plurality number is the maximum yellow count over all burning sequences of $\Gamma$. In standard graph burning \cite{Bonato2014Contagion,Bonato21} (see also  \cite{Bessy2018Bounds,Bonato2016HowToBurn,Bonato2019Approx} for bounds and approximation algorithms for the burning number; and see \cite{Bonato2014Contagion,GarciaDiaz2022,Pereira2024} for applications) on an unsigned graph, at each time step every unburned vertex adjacent to a previously burned vertex becomes burned, and then an unburned vertex is ignited, if one exists. The (unsigned) burning number $b(G)$ is the minimum number of steps required to burn every vertex of $G$. 

The following proposition relates the signed to the unsigned burning number (statements (ii) and (iii)); it is proven by inductively analyzing optimal burning sequences. Statement (i), which shows that switching-equivalent graphs have the same signed burning number, is a straightforward consequence of Lemma~\ref{lem:switching_burning}.

\begin{proposition}\label{prop:burning_bound}
Let $G$ be a graph, and let $\Gamma = (G, \sigma)$ be a signed graph.
\begin{enumerate}[{\upshape(i)}]
    \item If $\Gamma' = (G, \sigma')$ is a signed graph with $\Gamma' \sim \Gamma$, then $b(\Gamma') = b(\Gamma)$.
    \item The (unsigned) burning number satisfies $b(G) \leq b(\Gamma)$.
    \item If $\Gamma$ is a balanced signed graph, then $b(G) = b(\Gamma)$. In particular, if $G$ is a forest, then $b(G) = b(\Gamma)$.
\end{enumerate}
\end{proposition}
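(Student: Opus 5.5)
Statement (i) follows from Lemma~\ref{lem:switching_burning}. If $\Gamma'=\Gamma^U$, the map $\mathcal{I}\mapsto\mathcal{I}^U$ is a bijection $\mathcal{B}(\Gamma)\to\mathcal{B}(\Gamma')$ preserving termination time (as in Corollary~\ref{cor:R_switching}). Hence the two minima defining $b(\Gamma)$ and $b(\Gamma')$ range over the same multiset of values.

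For (ii), I compare the signed process with the unsigned one. Let $\mathcal{I}=(v_1^{c_1},\dots,v_k^{c_k})$ be optimal for $\Gamma$. I would show by induction on $t$ that the set $B_t$ of vertices burned (in either colour) by the end of time $t$ in $\Gamma$ is contained in the set $B'_t$ burned in an unsigned process on $G$. The unsigned process ignites $v_t$ at time $t$ whenever $v_t$ is still unburned, and otherwise ignites an arbitrary unburned vertex or does nothing. The key monotonicity is this: in the signed spread step, an unburned vertex $v$ becomes burned only if $h_t(v)\ne 0$, which requires a burned neighbour in $B_{t-1}\subseteq B'_{t-1}$. Then $v\in B'_t$ after the unsigned spread. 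Ignitions preserve the inclusion. One subtlety is that the unsigned rules require igniting exactly one unburned vertex per round if one exists. I would handle this by the standard fact that extra ignitions never hurt, or equivalently by the known formulation that $b(G)\le T$ whenever $G$ is covered by balls $N_{T-t}[x_t]$ around some $x_1,\dots,x_T$. Since $B_T=V(G)$, we get $B'_T=V(G)$, so $b(G)\le T(\mathcal{I})=b(\Gamma)$.

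For (iii), by (i) and the characterization of balance (Harary), it suffices to treat $\Gamma_+$. Given an optimal unsigned burning sequence on $G$, ignite the same vertices in the signed graph $\Gamma_+$, all in yellow. By induction, no green vertex ever appears. An unburned vertex with a burned neighbour then has $h_t(v)>0$, so it burns yellow, exactly as in the unsigned process. Thus the two processes coincide, and $b(\Gamma_+)\le b(G)$; combined with (ii) this gives equality. The forest case follows because forests are balanced.

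The main obstacle is the bookkeeping in (ii): in the unsigned simulation, a vertex $v_t$ may already be burned when the signed sequence wants to ignite it. I would resolve this via the ball-covering characterization: $V(G)=\bigcup_t B_{T-t}$-neighbourhoods of the $v_t$ in the signed process, since signed fire from $v_t$ spreads at most one step per round.
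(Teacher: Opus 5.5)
Your proposal is correct and follows the paper's proof in all three parts: the switching bijection for (i), the inductive inclusion $B_t\subseteq B'_t$ of the signed burned sets into those of a simulated standard process for (ii), and the all-yellow ignition on $\Gamma_+$ combined with (i) and Harary's characterization for (iii). The bookkeeping issue you call the main obstacle in (ii) is already handled inside your induction, as in the paper: if $v_t$ is already burned, it lies in $B'_{t-1}\subseteq B'_t$, and igniting some other unburned vertex instead only enlarges $B'_t$, so you need neither the ball-covering characterization nor a separate monotonicity fact.
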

\begin{proof}
\begin{enumerate}[{\upshape(i)}]
\item We have $\Gamma' = \Gamma^U$ for some $U \subseteq V(G)$ since $\Gamma' \sim \Gamma$. Consider a burning sequence $\mathcal{I} \in \mathcal{B}(\Gamma)$ with $T(\mathcal{I}) = b(\Gamma)$. Lemma~\ref{lem:switching_burning} implies that $\mathcal{I}^U \in \mathcal{B}(\Gamma')$ with $T(\mathcal{I}^U) = T(\mathcal{I}) = b(\Gamma)$, so $b(\Gamma') \leq b(\Gamma)$. The reverse inequality follows by symmetry. Therefore, $b(\Gamma') = b(\Gamma)$.

\item Fix an optimal burning sequence $\mathcal{I} = (v_1^{c_1}, \ldots, v_k^{c_k})$ for $\Gamma$. So $T = T(\mathcal{I}) = b(\Gamma)$, and write $\nu_t = \nu_{\mathcal{I},t}$ and $h_t$ for the associated configurations and influence sums. Run standard burning of $G$ with the ignitions $(v_1,\ldots,v_k)$. When some $v_t$ is already burned, ignite an arbitrary unburned vertex instead. We show that this process burns every vertex by time $T$. Let $Z_t$ denote the set of burned vertices after time $t$ in that standard process, and let $W_t = \mathcal{Y}_{\mathcal{I},t} \cup \mathcal{G}_{\mathcal{I},t}$ denote the set of all burned vertices in the signed burning process after time $t$. We claim $W_t \subseteq Z_t$ for all $t$, by induction on $t$. The base case $t = 0$ is immediate. Suppose $W_{t-1} \subseteq Z_{t-1}$. Let $v$ burn during the spread step of the signed process at time $t$. Then $h_t(v)\neq0$, so some term $\sigma(uv)\nu_{t-1}(u)$ is nonzero. Thus
$v$ has a burned neighbour in $W_{t-1}\subseteq Z_{t-1}$. Standard burning at time $t$ burns every unburned neighbour of $Z_{t-1}$, so $v\in Z_t$. At time $t \leq k$ the signed process ignites $v_t$. The standard process ignites $v_t$ as well if it is still unburned, and otherwise ignites any unburned vertex. In both cases $v_t \in Z_t$, since $v_t \in Z_{t-1}$ whenever it is already burned. Hence $W_t \subseteq Z_t$. Since $W_T = V(G)$, we get $Z_T = V(G)$ and $b(G) \leq b(\Gamma)$.

\item If $\Gamma$ is balanced then $\Gamma \sim \Gamma_+$ and by (i), $b(\Gamma) = b(\Gamma_+)$. By (ii), $b(G) \leq b(\Gamma_+)$. Fix an optimal burning sequence $(v_1, \ldots, v_k)$ for $G$, so that standard burning terminates at time $T = b(G)$. Run the signed burning process on $\Gamma_+$ with ignition sequence $(v_1^y, \ldots, v_k^y)$. Since all edges of $\Gamma_+$ are positive, an easy induction on $t$ shows that every configuration of this process is nonnegative: the ignitions are yellow, and $\operatorname{sgn} (h_t) \geq 0$ whenever $\nu_{t-1} \geq 0$. Hence $h_t(v)$ is the number of burned neighbours of $v$ and no green-burned vertex ever arises; the spreading rule thus reduces to that of standard graph burning. Hence $b(\Gamma_+) \leq b(G)$, giving $b(\Gamma_+) = b(G)$. Consequently, $b(G) = b(\Gamma)$. A forest has no cycles, so any signed forest is balanced. \qedhere
\end{enumerate}
\end{proof}

\begin{example}\label{ex:strict}
Let $\Gamma = (G,\sigma)$ be the signed graph in Figure~\ref{fig:strict_burning_number_example}, in which the dashed edge is negative and every other edge is positive. Its unique cycle is negative, so $\Gamma$ is unbalanced. By computer verification, $b(G) = 3$ and $b(\Gamma) = 4$. Hence the inequality in Proposition~\ref{prop:burning_bound}\,(ii) can be strict.
\begin{figure}[ht]
    \centering
    \begin{tikzpicture}[scale=0.5,
  vtx/.style={circle,draw,fill=black,inner sep=1.6pt},
  lab/.style={font=\small,inner sep=2pt},
  posedge/.style={thick}, negedge/.style={thick,dashed}]
  \node[vtx,label={[lab]above:}]        (v1)  at (0,1) {};
  \node[vtx,label={[lab]above:}]        (v2)  at (1.4,1) {};
  \node[vtx,label={[lab]below:}]        (v3)  at (1.4,0) {};
  \node[vtx,label={[lab]below:}]        (v4)  at (0,0) {};
  \node[vtx,label={[lab]above right:}]  (v5)  at (-1.2,1) {};
  \node[vtx,label={[lab]above:}]        (v6)  at (-2.4,1) {};
  \node[vtx,label={[lab]right:}]        (v8)  at (2.8,0) {};
  \node[vtx,label={[lab]above left:}]   (v7)  at (-1.2,0) {};
  \node[vtx,label={[lab]above left:}]   (v9)  at (-2.4,0) {};
  \node[vtx,label={[lab]below:}]     (v10) at (-3.6,0) {};
  \draw[posedge] (v1)--(v2); \draw[posedge] (v2)--(v3); \draw[posedge] (v1)--(v4);
  \draw[negedge] (v3)--(v4);
  \draw[posedge] (v1)--(v5); \draw[posedge] (v5)--(v6);
  \draw[posedge] (v3)--(v8);
  \draw[posedge] (v4)--(v7); \draw[posedge] (v7)--(v9); \draw[posedge] (v9)--(v10);
\end{tikzpicture}
    \caption{An unbalanced signed graph $\Gamma = (G,\sigma)$ with $3=b(G)< b(\Gamma) = 4$.}
    \label{fig:strict_burning_number_example}
\end{figure}
\end{example}

\begin{proposition}\label{prop:switching_distance}
Let $\Gamma$ and $\Gamma'$ be signed graphs on the same underlying graph $G$ on $n$ vertices, and suppose $\Gamma' = \Gamma^{\,U}$ for some $U \subseteq V(G)$. Then
\[
  Y(\Gamma') = \max\nolimits_{\,\nu \,\in\, \mathcal{R}(\Gamma)} \bigl|\{v : \nu(v) = \Phi_U(v)\}\bigr|
  = n - \min\nolimits_{\,\nu \,\in\, \mathcal{R}(\Gamma)} d_H\bigl(\nu, \Phi_U\bigr),
\]
where $d_H(\nu,\nu') = |\{v : \nu(v) \neq \nu'(v)\}|$ denotes the Hamming distance.
\end{proposition}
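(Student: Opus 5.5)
The plan is to combine the adopted form of the plurality number (after Lemma~\ref{lem:symmetry}) with Corollary~\ref{cor:R_switching}, and then translate a count of agreements into a Hamming distance.

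First, by Lemma~\ref{lem:symmetry} applied to $\Gamma'$,
\[
Y(\Gamma') = \max_{\nu' \in \mathcal{R}(\Gamma')} \bigl|\{v : \nu'(v) = 1\}\bigr|.
\]
Corollary~\ref{cor:R_switching} gives $\mathcal{R}(\Gamma') = \mathcal{R}(\Gamma^U) = \Phi_U \cdot \mathcal{R}(\Gamma)$. Since $\Phi_U\cdot$ is a bijection, I will reparametrize each $\nu' \in \mathcal{R}(\Gamma')$ uniquely as $\nu' = \Phi_U \cdot \nu$ with $\nu \in \mathcal{R}(\Gamma)$.

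Next I will rewrite the yellow set. All entries are $\pm 1$ and $\Phi_U(v)^2 = 1$, so $\Phi_U(v)\nu(v) = 1$ holds exactly when $\nu(v) = \Phi_U(v)$. Hence
\[
\bigl|\{v : (\Phi_U\cdot\nu)(v) = 1\}\bigr| = \bigl|\{v : \nu(v) = \Phi_U(v)\}\bigr|,
\]
and maximizing over $\nu \in \mathcal{R}(\Gamma)$ yields the first equality.

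For the second equality, every reachable colouring is a final colouring and so lies in $\{\pm1\}^{V(G)}$. Each vertex therefore either agrees or disagrees with $\Phi_U$, which gives $|\{v : \nu(v) = \Phi_U(v)\}| = n - d_H(\nu,\Phi_U)$. Maximizing the left side is the same as minimizing $d_H$, which gives the second expression.

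There is no real obstacle here. The only point to be careful about is that $\mathcal{R}(\Gamma)$ contains only total colourings with no zero entries; this holds because reachable colourings are taken at termination. That fact is what makes both the agreement/disagreement split and the sign identity valid.
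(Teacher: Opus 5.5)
Your proposal is correct and follows essentially the same route as the paper: Corollary~\ref{cor:R_switching} identifies $\mathcal{R}(\Gamma')$ with $\Phi_U\cdot\mathcal{R}(\Gamma)$, the identity $\Phi_U(v)\nu(v)=1 \iff \nu(v)=\Phi_U(v)$ turns the yellow set of $\Phi_U\cdot\nu$ into the agreement set of $\nu$ and $\Phi_U$, and agreements are converted to $n-d_H(\nu,\Phi_U)$. Your explicit remark that reachable colourings lie in $\{\pm1\}^{V(G)}$ is a step the paper leaves implicit, and it is what justifies splitting the vertices into agreements and disagreements.
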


\begin{proof}
By Lemma~\ref{lem:switching_burning} and Corollary~\ref{cor:R_switching} the map $\nu \mapsto \mu:=\Phi_U \cdot \nu$ is a bijection $\mathcal{R}(\Gamma) \to \mathcal{R}(\Gamma')$, and $\mu(v)=\Phi_U(v)\nu(v) = 1$ if and only if $\nu(v) = \Phi_U(v)$. Hence
\[
  Y(\Gamma') = \max\nolimits_{\mu \in \mathcal{R}(\Gamma')} \bigl|\{v : \mu(v) = 1\}\bigr|
  = \max\nolimits_{\nu \in \mathcal{R}(\Gamma)} \bigl|\{v : \nu(v) = \Phi_U(v)\}\bigr|,
\]
since $\nu$ and $\Phi_U$ agree at exactly $n - d_H(\nu,\Phi_U)$ vertices.
\end{proof}

\begin{remark}\label{rem:covering_radius}
(i) Identify $\{\pm1\}$ with the field $\mathbb{F}_2$ via $x\mapsto(1-x)/2$. Then $R(\Gamma)$ becomes a binary code of length $n$. Proposition~\ref{prop:switching_distance} says
that $n-Y(\Gamma^U)$ is the distance from the word $\Phi_U$ to this code. As $U$ ranges over all subsets of $V(G)$, the vector $\Phi_U$ ranges over all of $\{\pm 1\}^{V(G)}$ and the signed graph $\Gamma^{\,U}$ ranges over the whole switching class of $\Gamma$, so
    \[
    \min\nolimits_{\Gamma' \sim \Gamma} Y(\Gamma')
    = n - \max\nolimits_{U \subseteq V(G)} \ \min\nolimits_{\nu \in \mathcal{R}(\Gamma)} d_H\bigl(\nu, \Phi_U\bigr)
    = n - \operatorname{cr}\bigl(\mathcal{R}(\Gamma)\bigr),
    \]
    where $\operatorname{cr}(C) = \max_{\Phi \in \{\pm 1\}^{V(G)}} \min_{\nu \in C} d_H(\nu,\Phi)$ denotes the covering radius of a code $C \subseteq \{\pm 1\}^{V(G)}$. Determining the signature that minimises the plurality number within a switching class is thus a covering radius problem for $\mathcal{R}(\Gamma)$.
    
    (ii) Taking $\Gamma' = \Gamma$ and $U = \emptyset$ recovers the definition of $Y(\Gamma)$, so the content of Proposition~\ref{prop:switching_distance} lies entirely in changing the representative of the switching class. By Corollary~\ref{cor:R_switching} the set $\mathcal{R}$ is determined by that class up to coordinate-wise multiplication by a sign vector, so the choice of representative is immaterial.
\end{remark}

\section{Plurality Number of Signed Path Graphs}\label{sec:paths}

Let $\Gamma=(P_n,\sigma)$ be a signed path of order $n\geq 2$ with $V(P_n)=\{v_1,\ldots,v_n\}$ and $E(P_n)=\{v_iv_{i+1}: 1\le i\le n-1\}$. Assume that the vertices are ordered from left to right. We write $\sigma_i=\sigma(v_iv_{i+1})$ for the sign of the edge $v_iv_{i+1}$. It is well known that $b(P_n)=\lceil\sqrt{n}\,\rceil$ (see \cite[Theorem~2.9]{Bonato2016HowToBurn}). Since every
signed path is balanced, Proposition~\ref{prop:burning_bound}\,(iii) gives $b(\Gamma)=\lceil \sqrt{n}\,\rceil$ for every signature $\sigma$, so the signed burning number
of a path is already determined by its order. We therefore devote this section to the plurality number of signed paths.

A \emph{negative segment} of $\Gamma$ is a subpath of $P_n$ with at least one edge, all of whose edges are negative, and which is maximal with respect to these two properties. The negative segments are pairwise vertex disjoint. Consider the subgraph of $P_n$ induced by the vertices that lie in no negative segment. Its components are the \emph{positive segments} of $\Gamma$. The segments therefore partition $V(P_n)$ into subpaths, which we list from left to right as $S_1,S_2,\ldots,S_r$. The \emph{order} of a segment is its number of vertices. When $S\neq S_1$, we write $S'$ for the segment preceding $S$.

Three immediate properties will be used without comment. Every edge of a negative segment is negative and every edge of a positive segment is positive, since a negative edge lying in no negative segment would contradict maximality. Two consecutive segments are never both positive. Indeed, two adjacent
vertices that lie in no negative segment lie in the same component. So one of any two consecutive segments is negative, and the edge joining them is positive. Finally, negative segments have order at least $2$, while positive segments may have order~$1$.

From now on $m_1,\ldots,m_s$ denote the orders of the negative segments of
$\Gamma$, listed from left to right. For a segment $S$ put $e(S)=|\{v_1,v_n\}\cap V(S)|$, and let $e_j=e(S)$, where $S$ is the $j$-th negative segment. Notice that $e_j=0$ unless this segment is $S_1$ or $S_r$,
and $e_j=2$ only when $\Gamma=\Gamma_-$.

Theorems~\ref{thm:path-lower} and ~\ref{thm:upper} provide lower and upper bounds on the plurality number of signed paths, respectively. They are proved in Sections~\ref{sec:lower} and \ref{sec:upper}.

\begin{theorem}\label{thm:path-lower}
Let $\Gamma=(P_n,\sigma)$ be a signed path with segments $S_1,\ldots,S_r$, and let $m_1,\ldots,m_s$ be the orders of its negative segments. Set $\varepsilon=1$ if $S_1$ and $S_r$ are negative segments whose orders are congruent to $2$ modulo $3$, and $\varepsilon=0$ otherwise. Then
\[
  Y(\Gamma)\;\ge\; n-\sum_{i=1}^{s}\Big\lfloor \frac{m_i}{3}\Big\rfloor-\varepsilon .
\]
\end{theorem}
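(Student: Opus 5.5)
By Proposition~\ref{prop:switching_distance} I may replace $\Gamma$ by any switching-equivalent signed graph, as long as I measure agreement with the corresponding sign vector. Every signed path is balanced, so $\Gamma=\Gamma_+^{\,U}$ for a suitable $U$. Then $Y(\Gamma)=n-\min_{\nu\in\mathcal{R}(\Gamma_+)} d_H(\nu,\Phi_U)$. Normalising $\Phi_U(v_1)=1$, the target word $\Phi_U$ is constant on each positive segment, alternates along each negative segment, and keeps its value across each (positive) edge joining two segments. So it suffices to exhibit one burning sequence of the unsigned path $\Gamma_+$ whose final colouring disagrees with $\Phi_U$ in at most $\sum_i\lfloor m_i/3\rfloor+\varepsilon$ vertices.

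First I would describe reachable colourings of $\Gamma_+$ on a path. Fires keep their colour, and a vertex whose two neighbours have opposite colours at the same moment stays unburned (a tie) until I ignite it in any colour I like. Hence a monochromatic run of length $1$ can survive only if, at the moment it is ignited or burned, each neighbour is already burned or is tied from the other side.

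The core is a gadget for one negative segment of order $m$, where the target alternates $a\,b\,a\,b\cdots$. Processing the segment in consecutive triples, I aim to reproduce two of every three target colours exactly and sacrifice the third. The sacrificed vertex is the one whose neighbour would otherwise spread its colour. It is used to create a tie (or to be overwritten by a spread), which lets the next vertex be ignited in the required colour.

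I would schedule ignitions segment by segment from left to right, always one step ahead of the advancing fire front. Positive segments are simply swept by one fire of the correct colour, which costs nothing. Inside a negative segment, the triple pattern is timed so that the front entering from the left meets a freshly ignited vertex of the opposite colour at the right moment to create a tie. Checking the remainders $m\equiv0,1,2\pmod 3$, I expect the boundary vertices, which are shared in colour with the adjacent positive segment, to absorb the leftover one or two vertices for free. The exception is when the segment has nothing on one side, that is, when it is $S_1$ or $S_r$.

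I expect the main obstacle to be exactly this boundary analysis. A terminal negative segment whose order is $\equiv 2\pmod 3$ lacks an outside neighbour to provide the tie, so it seems to need one extra error. The claim behind $\varepsilon$ is that this extra error is needed only when both ends are of that type. When just one end is bad, I would start the whole process at that end: the first ignition there is free to take any colour, so that end costs nothing extra. When both ends are bad, only one of them can be the starting end, which produces the additional $-1$. The delicate part is verifying that the fixed timing of spreads never forces an additional mismatch at a segment junction. I would handle this by induction on the number of segments, with the invariant that when the front reaches the first vertex of a new segment, that vertex has its target colour and nothing to its right has burned yet.
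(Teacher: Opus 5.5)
\paragraph{The gap: you have the two ends the wrong way round.} Your framework matches the paper's. You switch to $\Gamma_+$ with target word $\Phi_U$, sweep the path once with ignitions kept just ahead of the fire front, sacrifice one vertex in every three on each negative segment, and let ties create held vertices that are ignited correctly at the end. The problem is the end analysis, which is exactly what produces $\varepsilon$.

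In a left-to-right sweep the costly end is the \emph{starting} one. At the spread step of time $2$ the first fire is the only burned vertex, so it spreads unopposed and no tie can form. The freedom in its colour only decides which vertex ends up wrong. The finishing end needs no outside neighbour. There, the fire arriving from the left, together with an ignition of $v_n$ itself, already colours the last vertices correctly; for a segment of order $2$ this works by holding its first vertex in a tie.

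Your rule already fails for $n=3$ and $\sigma=(-,+)$. Here $S_1=v_1v_2$ is negative of order $2$, $S_2=\{v_3\}$ is positive, $\varepsilon=0$, and the theorem asserts $Y(\Gamma)=3$. If you start at $S_1$, the first ignition is $v_1$ or $v_2$. At time $2$ the other vertex of $S_1$ burns across the negative edge in the opposite colour, so you get at most $2$. If you start at $v_3$ instead, $v_2$ burns yellow at time $2$ and you ignite $v_1$ yellow in the same time step, reaching $3$. So your plan loses a vertex precisely when exactly one end is bad, which is where the theorem promises no loss. With two bad ends your count of one extra error is right, but it is paid at the start, not at the finish.

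\paragraph{How the paper handles it, and what else is missing.} The paper reflects the path so that a lone bad end is $S_r$, then sweeps from $v_1$. It shows that the only disagreement beyond the running entries is at $v_1$, and that this occurs exactly when $S_1$ is negative of order $\equiv 2\pmod 3$, that is, when $\varepsilon=1$.

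Apart from this, the proposal leaves the actual schedule and the junction checks as expectations. Your proposed invariant also does not fit a schedule that keeps pace with the front. When the front reaches the first vertex of a new segment, the next ignition often already lies to its right; this is how your ties arise. That first vertex may also be held rather than burned in its target colour, as for a negative segment of order $2$ entered from a positive one.

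The paper's invariant is stated per ignition. After the spread step of time $k+1$, every vertex to the left of the right neighbour $x_k$ of $w_k$ is burned or held, nothing to the right of $x_k$ is burned, and the next entry lies $\delta\in\{1,2,3\}$ places beyond $x_k$. What makes these checks go through are the explicit lists $L(S)$. These start at $u_3$ when $m\ge 5$ and $m\equiv 2\pmod 3$, and they ignite a positive singleton segment that follows a segment of order divisible by $3$.
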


\begin{theorem}
\label{thm:upper}
Let $\Gamma=(P_n,\sigma)$ be a signed path with segments $S_1,\ldots,S_r$, and let $m_1,\ldots,m_s$ be the orders of its negative segments. Then
\[
  Y(\Gamma)\le n-\sum_{i=1}^{s}\Big\lfloor\frac{m_i-1+e_i}{3}\Big\rfloor .
\]
\end{theorem}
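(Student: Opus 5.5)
The plan is to prove the upper bound one negative segment at a time. Fix an arbitrary burning sequence $\mathcal{I}$ with final colouring $\nu=\nu_{\mathcal{I}}$. Since $Y(\Gamma)=\max_{\nu\in\mathcal{R}(\Gamma)}|\{v:\nu(v)=1\}|$ (Lemma~\ref{lem:symmetry}), and the negative segments are pairwise vertex disjoint, it suffices to prove the following. Every negative segment $S$ of order $m$ with $e=e(S)$ contains at least $\lfloor (m-1+e)/3\rfloor$ green vertices of $\nu$. Summing over the $s$ negative segments then gives the theorem.

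I would first record a local rule for paths, read off from Remark~\ref{rem:arithmetic}. Call an edge $uv$ \emph{consistent} for $\nu$ if $\nu(u)\nu(v)=\sigma(uv)$; inside a negative segment this means $u$ and $v$ get different colours. A vertex $v$ that is burned in a spread step at time $t$ satisfies $h_t(v)\neq 0$. Since $v$ has degree at most $2$, this forces some neighbour $u$, burned by time $t-1$, with $uv$ consistent. In particular, a vertex whose incident edges are all inconsistent must have been ignited. Moreover, at the time $t$ it is ignited it survived the spread step, so its neighbours burned by time $t-1$ either cancel each other ($h_t=0$) or do not exist. A further timing fact: a vertex $x$ ignited at time $t$ spreads, at time $t+1$, to every neighbour that is still unburned, unless that neighbour is held at a tie by its other neighbour.

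Next I would apply these rules to a maximal run $R$ of consecutive yellow vertices lying inside $S$. All edges of $S$ are negative, so all edges internal to $R$ are inconsistent. Hence each interior vertex of $R$ is ignited. So is any end of $R$ whose only other neighbour is a path end ($v_1$ or $v_n$), or is green inside $S$. Using the timing facts, I would bound the length of $R$. An interior ignited vertex $x$ of $R$ needs its two neighbours in $R$ to be unburned before its spread step and burned no later than time $t$. Chaining this along $R$ should show that $R$ has at most two vertices, except possibly where $R$ touches a boundary edge of $S$. That boundary edge is positive, so it can feed a consistent yellow fire into $S$. This gives a pattern in which the yellow runs are separated by green vertices. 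From the pattern, a counting argument over the $m$ vertices of $S$ yields at least $\lfloor (m-1+e)/3\rfloor$ greens. Each interior endpoint of $S$, adjacent to a positive segment, can absorb one extra yellow, which is where the "$-1$" comes from. An endpoint equal to $v_1$ or $v_n$ cannot absorb it, which accounts for $e$.

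The main obstacle will be the timing analysis that limits yellow runs. Simultaneous burning is subtle here. Two vertices $w,z$ flanking $x$ can be burned in the same spread step from outside, with $x$ kept unburned and then ignited yellow. So short inconsistent triples are not excluded by a purely static argument. I would first try to prove directly that a run of length $3$ can occur only when both flanking fires come from outside $S$. Failing that, I would charge each such triple to a forced green vertex nearby in $S$, or to a boundary edge of $S$. I would then check small cases ($m=2,3,4,5$, with $e\in\{0,1,2\}$) by hand to confirm that the charging matches the floor exactly.
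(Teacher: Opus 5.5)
\paragraph{The structural claims are false.} Your reduction to counting green vertices on each negative segment is sound. After switching, it is the same reduction the paper makes: the greens of $\Gamma$ on $S$ are the vertices where the $\Gamma_+$-colouring disagrees with the alternating vector $\Phi_U$. The gap is in the structural claims meant to drive the count. Take $\Gamma_-$ on $P_7$, so $S=P_7$ and $e=2$, with the sequence $(v_7^{y},v_2^{g},v_4^{y})$:
\begin{itemize}
\item at time $2$, $v_6$ burns green;
\item at time $3$, $v_1,v_3,v_5$ burn yellow, while $v_4$ has no burned neighbour and is ignited yellow.
\end{itemize}
The final colouring is $y\,g\,y\,y\,y\,g\,y$. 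It is optimal, since $Y(\Gamma_-)=\lfloor 15/3\rfloor=5$. Here $v_3v_4v_5$ is a yellow run of length three strictly inside $S$. Its ends were burned by fire from $v_2,v_6\in S$. The end $v_3$, whose other neighbour is green inside $S$, was not ignited, because that edge is consistent. So three of your claims fail:
\begin{itemize}
\item runs have at most two vertices away from a boundary edge;
\item a run of three needs both flanking fires from outside $S$;
\item an end of $R$ next to a green vertex of $S$ must be ignited.
\end{itemize}

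\paragraph{The fallback cannot be local.} What remains is your fallback, ``charge each triple to a nearby forced green''. This is where the whole difficulty lies, and it cannot be done locally. The true constraint is the paper's Lemma~\ref{lem:chain}, translated into your picture. A block of the $\Gamma_+$-colouring corresponds to a maximal alternating stretch of $S$, and a singleton block corresponds to the centre of a monochromatic triple (yellow or green). The constraint is: between two consecutive triple centres, some maximal alternating stretch has length $2$ or at least $4$. In particular $y\,y\,y\,g\,(y\,y\,g)^K\,y\,y\,y$ is impossible for every $K\ge 0$. The reason propagates along the entire chain of length-$3$ stretches. The paper proves it by an induction along the chain, tracking burn times, held versus free vertices, and the fact that distinct ignited vertices have distinct burn times (Lemmas~\ref{lem:wave} and~\ref{lem:short}).

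Excluding these words only for $K$ up to a fixed bound does not suffice. The word obtained by repeating $y\,y\,y\,g\,(y\,y\,g)^{K+1}$ avoids all of them, has no run of four and no adjacent triple centres, and has green density $(K+2)/(3K+7)<1/3$. It therefore violates $\lfloor (m-1+e)/3\rfloor$ on long segments, and a hand check of $m\le 5$ cannot detect this.

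\paragraph{The missing counting step.} Given the chain lemma, the paper still needs a global count (Lemma~\ref{lem:odd}):
\begin{itemize}
\item weight a stretch of length $L$ by $L-3$ if $L$ is odd and by $L$ if $L$ is even;
\item give each consecutive pair of triple centres its own stretch of weight at least $2$;
\item use that a length-$1$ stretch which is not a triple centre can only sit at an end of $S$ other than $v_1,v_n$ (this is where $e$ enters).
\end{itemize}
This gives $3q\le m+6-2e$ for the number $q$ of odd stretches. Since a stretch of length $L$ carries at least $\lfloor L/2\rfloor$ greens, the number of greens is at least $(m-q)/2\ge (m+e-3)/3$. Your proposal contains neither this chain lemma nor an equivalent global argument, so as written it does not reach the bound.
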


Combining Theorems~\ref{thm:path-lower} and \ref{thm:upper}, a case distinction on the number of negative segments proves the following result determining $Y(\Gamma)$ for certain signed paths:

\begin{theorem}\label{thm:exact}
Let $\Gamma=(P_n,\sigma)$ be a signed path with segments $S_1,\ldots,S_r$, let $m_1,\ldots,m_s$ be the orders of its negative segments, and let $\varepsilon$ be as in Theorem~\ref{thm:path-lower}. Then 
\[
  Y(\Gamma)=n-\sum_{i=1}^{s}\Big\lfloor \frac{m_i}{3}\Big\rfloor-\varepsilon = n-\sum_{i=1}^{s}\Big\lfloor\frac{m_i-1+e_i}{3}\Big\rfloor
\]
if and only if either $\Gamma=\Gamma_-$, or both of the following hold:
\begin{enumerate}[{\upshape(i)}]
\item every negative segment other than $S_1$ and $S_r$ has order not divisible by $3$;
\item $\varepsilon=0$.
\end{enumerate}
In particular, $Y(\Gamma_-) = \Big\lfloor \frac{2n+1}{3}\Big\rfloor$.
\end{theorem}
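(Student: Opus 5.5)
The proof will be purely arithmetic. Write $L = n-\sum_i\lfloor m_i/3\rfloor-\varepsilon$ for the lower bound of Theorem~\ref{thm:path-lower} and $U = n-\sum_i\lfloor (m_i-1+e_i)/3\rfloor$ for the upper bound of Theorem~\ref{thm:upper}. Since $L\le Y(\Gamma)\le U$, both stated equalities hold exactly when $L=U$. So the ``if and only if'' reduces to characterising when
\[
\sum_{i=1}^s\Big\lfloor\frac{m_i}{3}\Big\rfloor+\varepsilon=\sum_{i=1}^s\Big\lfloor\frac{m_i-1+e_i}{3}\Big\rfloor .
\]
Here $L=U$ is sufficient for both equalities. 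It is also necessary, because the displayed statement asserts that $L$ and $U$ are equal as numbers.

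The plan is to compare termwise. For $e_i=1$ the $i$-th terms agree. For $e_i=0$ (a segment not containing an endpoint) we have $\lfloor (m_i-1)/3\rfloor\le\lfloor m_i/3\rfloor$, with strict inequality (difference exactly $1$) iff $3\mid m_i$. For $e_i=2$, which happens only when $\Gamma=\Gamma_-$, $s=1$, and $m_1=n$, the right-hand term is $\lfloor (n+1)/3\rfloor$. So I split into two cases.

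\emph{Case $\Gamma\ne\Gamma_-$.} Every $e_i\in\{0,1\}$, so the right-hand side is at most $\sum_i\lfloor m_i/3\rfloor$ and the left-hand side is at least that. Equality therefore forces $\varepsilon=0$ and every $e_i=0$ segment to have $3\nmid m_i$. The $e_i=0$ segments are exactly the negative segments other than $S_1$ and $S_r$, so this gives conditions (i) and (ii). Conversely, (i) and (ii) make the two sides equal termwise.

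\emph{Case $\Gamma=\Gamma_-$.} Here $s=1$, $m_1=n$, and $S_1=S_r$ is a single segment. I read $\varepsilon=1$ iff $n\equiv 2\pmod 3$, since ``$S_1$ and $S_r$ are negative of order $\equiv2$'' is then a single condition; this reading is the step I expect to need the most care. The claim is $\lfloor n/3\rfloor+\varepsilon=\lfloor (n+1)/3\rfloor$, which I check for $n\bmod 3$. For $n=3k$ both sides are $k$. For $n=3k+1$ both sides are $k$. For $n=3k+2$ the left side is $k+1$ and the right side is $\lfloor(3k+3)/3\rfloor=k+1$. So equality always holds, matching the clause ``either $\Gamma=\Gamma_-$''. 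Finally,
\[
Y(\Gamma_-)=n-\Big\lfloor\frac{n+1}{3}\Big\rfloor=\Big\lceil\frac{3n-n-1}{3}\Big\rceil=\Big\lceil\frac{2n-1}{3}\Big\rceil=\Big\lfloor\frac{2n+1}{3}\Big\rfloor,
\]
using $\lceil a/3\rceil=\lfloor (a+2)/3\rfloor$ for integers $a$.

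Apart from this interpretation of $\varepsilon$ for $\Gamma_-$, and checking that $e_i=2$ cannot occur outside $\Gamma_-$ (noted in the paper), the argument is routine floor arithmetic.
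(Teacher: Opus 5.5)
Your proof is correct and follows essentially the same route as the paper. You reduce to $\mathrm{LB}=\mathrm{UB}$ via $\mathrm{LB}\le Y(\Gamma)\le \mathrm{UB}$, compare the floor terms segment by segment according to $e_i\in\{0,1,2\}$, and treat $\Gamma_-$ separately, reading $\varepsilon=1$ iff $n\equiv 2 \pmod 3$ exactly as the paper does. The only differences are cosmetic: your case $\Gamma\neq\Gamma_-$ absorbs the paper's separate case $s=0$, and you obtain $n-\lfloor (n+1)/3\rfloor=\lfloor (2n+1)/3\rfloor$ from a ceiling identity rather than by checking residues.
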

\begin{proof}
Write $\mathrm{LB}$ and $\mathrm{UB}$ for the bounds in Theorems~\ref{thm:path-lower} and~\ref{thm:upper}, respectively. Since $\mathrm{LB}\le Y(\Gamma)\le\mathrm{UB}$, the two equalities in the statement hold exactly when $\mathrm{LB}=\mathrm{UB}$. Put $\delta_i=\lfloor m_i/3\rfloor-\lfloor (m_i-1+e_i)/3\rfloor$, so that
\[
  \mathrm{UB}-\mathrm{LB}=\sum_{i=1}^{s}\delta_i+\varepsilon .
\]
Checking the residues of $m_i$ modulo $3$ gives
\[
  \delta_i=\begin{cases}
    1 & \text{if } e_i=0 \text{ and } m_i\equiv 0 \, (\bmod \, 3),\\
   -1 & \text{if } e_i=2 \text{ and } m_i\equiv 2 \, (\bmod \, 3),\\
    0 & \text{otherwise.}
  \end{cases}
\]
If $s=0$, then $\Gamma=\Gamma_+$ and $\mathrm{LB}=\mathrm{UB}=n$, and (i) and (ii) hold trivially. If $s\ge 1$ and $r=1$, then $\Gamma=\Gamma_-$. The only negative segment has $m_1=n$ and $e_1=2$. Moreover $\varepsilon=1$ exactly when $n\equiv 2\, (\bmod \, 3)$, that is, exactly when $\delta_1=-1$. So $\mathrm{LB}=\mathrm{UB}$. If $s\geq 1$ and $r\ge 2$, then every $e_i$ is at most $1$. Then every $\delta_i$ is nonnegative, and so is $\varepsilon$. Hence $\mathrm{LB}=\mathrm{UB}$ if and only if every $\delta_i$ vanishes and $\varepsilon=0$. Since $e_i=0$ exactly for the negative segments other than $S_1$ and $S_r$, this is the conjunction of (i) and (ii). For $\Gamma=\Gamma_-$ we obtain $Y(\Gamma_-)=n-\lfloor (n+1)/3\rfloor$, which equals $\lfloor (2n+1)/3\rfloor$ in each of the cases $n\equiv 0,1,2\, (\bmod \, 3)$.
\end{proof}

Theorems~\ref{thm:path-lower} and \ref{thm:exact} yield the following corollary.

\begin{corollary}\label{cor:minimum}
Let $\Gamma=(P_n,\sigma)$ be a signed path. Then
$Y(\Gamma)\;\ge\;Y(\Gamma_-)\;=\;\big\lfloor\frac{2n+1}{3}\big\rfloor$. If $n\equiv 2\, (\bmod \, 3)$, then equality holds if and only if $\Gamma=\Gamma_-$.
\end{corollary}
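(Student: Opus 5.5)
The plan is to use Theorem~\ref{thm:path-lower} directly, with the value $Y(\Gamma_-)=\lfloor (2n+1)/3\rfloor$ taken from Theorem~\ref{thm:exact}. Write $\mathrm{LB}(\Gamma)=n-\sum_i\lfloor m_i/3\rfloor-\varepsilon$. It then suffices to show that $\mathrm{LB}(\Gamma)\ge \lfloor(2n+1)/3\rfloor = n-\lfloor (n+1)/3\rfloor$ for every signature, i.e.
\[
  \sum_{i=1}^{s}\Big\lfloor\frac{m_i}{3}\Big\rfloor+\varepsilon\;\le\;\Big\lfloor\frac{n+1}{3}\Big\rfloor ,
\]
with strict inequality when $n\equiv 2\pmod 3$ and $\Gamma\neq\Gamma_-$. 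The case $\Gamma=\Gamma_-$ is trivial, and $s=0$ gives $Y=n$.

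For the inequality, use that the negative segments are vertex disjoint, so $\sum m_i\le n$. Superadditivity of the floor function then gives $\sum\lfloor m_i/3\rfloor\le\lfloor \sum m_i/3\rfloor\le\lfloor n/3\rfloor$, which settles the case $\varepsilon=0$.

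If $\varepsilon=1$, then $S_1$ and $S_r$ are distinct negative segments, since $r=1$ would mean $\Gamma=\Gamma_-$, which is excluded here. Both have orders $\equiv 2\pmod 3$. Hence $\lfloor m_1/3\rfloor+\lfloor m_s/3\rfloor=(m_1+m_s-4)/3$, while every other term satisfies $\lfloor m_i/3\rfloor\le m_i/3$. Moreover, two consecutive negative segments are separated by at least one positive-segment vertex, so $n\ge\sum m_i+(s-1)$ with $s\ge2$. Together these give
\[
  \sum\lfloor m_i/3\rfloor+1\le \frac{\sum m_i-4}{3}+1=\frac{\sum m_i-1}{3}\le \frac{n-s}{3}\le\frac{n-2}{3},
\]
so the left side is at most $\lfloor n/3\rfloor$ in all cases. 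Since $\lfloor n/3\rfloor\le\lfloor (n+1)/3\rfloor$, this yields $Y(\Gamma)\ge Y(\Gamma_-)$.

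For the equality statement, suppose $n\equiv 2\pmod 3$ and $\Gamma\ne\Gamma_-$. Then $\lfloor(n+1)/3\rfloor=(n+1)/3$, whereas the bound just proved gives at most $\lfloor n/3\rfloor=(n-2)/3$, which is strictly smaller. Hence $\mathrm{LB}(\Gamma)>Y(\Gamma_-)$, and therefore $Y(\Gamma)>Y(\Gamma_-)$. The converse direction (equality when $\Gamma=\Gamma_-$) is trivial.

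The only delicate step is the $\varepsilon=1$ case, where the extra $+1$ must be absorbed. The key is to combine the residue-$2$ slack of the two end segments with the fact that at least one positive vertex separates them.
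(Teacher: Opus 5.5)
Your strategy matches the paper's. You reduce, via Theorem~\ref{thm:path-lower} and $Y(\Gamma_-)=n-\lfloor (n+1)/3\rfloor$, to showing $\sum_i\lfloor m_i/3\rfloor+\varepsilon\le\lfloor (n+1)/3\rfloor$. You handle $\varepsilon=0$ by superadditivity of the floor, and you use the residue-$2$ slack of $S_1$ and $S_r$ when $\varepsilon=1$.

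However, one step in the $\varepsilon=1$ case is false: ``two consecutive negative segments are separated by at least one positive-segment vertex, so $n\ge\sum m_i+(s-1)$.'' The structural fact available is only that two \emph{positive} segments are never consecutive. Two negative segments can be adjacent, joined by a single positive edge, with no positive segment between them. For instance, take $\sigma=(-,+,-)$ on $P_4$. Every vertex lies in a negative segment, so the segments are $S_1=v_1v_2$ and $S_2=v_3v_4$, both negative of order $2$. Then $\varepsilon=1$, $s=2$ and $\sum m_i=n=4$. In this case your displayed chain asserts $1\le (n-s)/3=2/3$, which is false.

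The repair is easy and is exactly the paper's argument. Use only disjointness, $\sum m_i\le n$, to get
\[
\sum_i\Big\lfloor\frac{m_i}{3}\Big\rfloor+1\;\le\;\frac{\sum m_i-1}{3}\;\le\;\frac{n-1}{3}.
\]
Since the left side is an integer, it is at most $\lfloor (n-1)/3\rfloor\le\lfloor n/3\rfloor$. The integrality step is genuinely needed, because $(n-1)/3>(n-2)/3=\lfloor n/3\rfloor$ when $n\equiv 2\pmod 3$. With it, $\lfloor (n-1)/3\rfloor=(n-2)/3=\lfloor (n+1)/3\rfloor-1$ for $n\equiv 2\pmod 3$. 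So your strictness argument for $\Gamma\ne\Gamma_-$, and hence the equality characterization, goes through unchanged.
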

\begin{proof}
By Theorem~\ref{thm:exact} we have $Y(\Gamma_-)=n-\lfloor (n+1)/3\rfloor$, so by
Theorem~\ref{thm:path-lower} it suffices to show that
$\sum_{i=1}^{s}\lfloor m_i/3\rfloor+\varepsilon\le\lfloor (n+1)/3\rfloor$.
The negative segments are pairwise vertex disjoint, so $\sum_{i=1}^{s} m_i\le n$.
If $\varepsilon=0$, then $\sum_{i=1}^{s}\lfloor m_i/3\rfloor\le\lfloor\sum_{i=1}^{s} m_i/3\rfloor\le\lfloor n/3\rfloor$.
If $\varepsilon=1$ and $s=1$, then $\Gamma=\Gamma_-$ and the two sides are equal.
If $\varepsilon=1$ and $s\ge 2$, then $S_1$ and $S_r$ are distinct negative segments with
$m_1\equiv m_s\equiv 2\, (\bmod \, 3)$, whence
\[
  \sum_{i=1}^{s}\Big\lfloor\frac{m_i}{3}\Big\rfloor+\varepsilon
  \;\le\;\frac{m_1-2}{3}+\frac{m_s-2}{3}+\sum_{i\neq 1,s}\frac{m_i}{3}+1
  \;\le\;\frac{n-1}{3}.
\]
Since the left side is an integer, it is at most $\lfloor (n-1)/3\rfloor$, which proves the inequality. Now let $n\equiv 2\, (\bmod \, 3)$ and $\Gamma\neq\Gamma_-$. Then
$\lfloor n/3\rfloor=\lfloor (n-1)/3\rfloor=(n-2)/3=\lfloor (n+1)/3\rfloor-1$, so the first and
the third case give $\sum_{i=1}^{s}\lfloor m_i/3\rfloor+\varepsilon\le\lfloor (n+1)/3\rfloor-1$.
Theorem~\ref{thm:path-lower} now gives $Y(\Gamma)\ge Y(\Gamma_-)+1$.
\end{proof}

\begin{remark}
Every signed path is balanced, so the signatures of $P_n$ form a single switching class.
By Remark~\ref{rem:covering_radius}(i),
the minimum of $Y$ over that class is $n-\operatorname{cr}(\mathcal{R}(\Gamma_+))$.
Corollary~\ref{cor:minimum} therefore computes the covering radius of the code of reachable
colourings of the path,
\[
  \operatorname{cr}\bigl(\mathcal{R}(\Gamma_+)\bigr)=\Big\lfloor\frac{n+1}{3}\Big\rfloor ,
\]
and the alternating vector $\Phi^\ast$ given by $\Phi^\ast(v_i)=(-1)^{i+1}$, which is the one belonging to $\Gamma_-$, is a deep hole of this code.
For $n\equiv 2\, (\bmod \, 3)$ it is the only deep hole, up to a global change of sign.
\end{remark}

Both bounds are proved on the all positive path, as follows. Let $U$ be the set of vertices $v_i$ for which the path $v_1v_2\cdots v_i$ is negative, where $2\leq i\leq n$. Note that the paths $v_1\cdots v_j$ and $v_1\cdots v_{j+1}$ have different signs exactly when $\sigma_j=-1$, so $\Phi_U(v_j)\Phi_U(v_{j+1})=\sigma_j$ for every $1\leq j\leq n-1$. By \eqref{eqn:switch_product} we have $\Gamma=(\Gamma_+)^U$. In particular, $\Phi_U$ alternates along every negative segment, it is constant on every positive segment, and it takes the same value at the two ends of the edge joining two consecutive segments. Proposition~\ref{prop:switching_distance}, applied to $\Gamma_+$ and $(\Gamma_+)^U=\Gamma$, gives
\begin{equation}\label{eqn:path_reduction}
  n-Y(\Gamma)=\min\nolimits_{\nu\in\mathcal{R}(\Gamma_+)} d_H(\nu,\Phi_U).
\end{equation}
By Lemma~\ref{lem:switching_burning}, a burning sequence $\mathcal{I}$ of $\Gamma_+$ with final colouring $\nu$ yields the burning sequence $\mathcal{I}^U$ of $\Gamma$ with final colouring $\Phi_U\cdot\nu$. Its green vertices are the vertices $v$ with $\nu(v)\neq\Phi_U(v)$. So Theorem~\ref{thm:path-lower} amounts to exhibiting one reachable colouring of $\Gamma_+$ close to $\Phi_U$, and Theorem~\ref{thm:upper} to bounding the distance to $\Phi_U$ from below for all of them.

On $\Gamma_+$ the influence sum of Remark~\ref{rem:arithmetic} is $h_t(v)=\sum_{u\in N(v)}\nu_{t-1}(u)$, so the spread rule reads: an unburned vertex takes the strict majority colour of its burned neighbours, and stays unburned in case of a tie. On a path this says that an unburned vertex with exactly one burned neighbour takes the colour of that neighbour, and that an unburned vertex with two burned neighbours takes their common colour if they agree, and stays unburned if they do not.

Let $\mathcal{I}\in\mathcal{B}(\Gamma_+)$ and write $\nu=\nu_{\mathcal{I}}$. For a vertex $v$ let $\beta(v)=\min\{t:\nu_{\mathcal{I},t}(v)\neq 0\}$ be the time at the end of which $v$ becomes burned, and put $g(v)=\min_{u\in N(v)}\beta(u)$. The spread step at time $t$ consults exactly the neighbours $u$ of $v$ with $\beta(u)\le t-1$. We say that $v$ is \emph{ignited} if $v$ occurs in $\mathcal{I}$; its ignition time is then $\beta(v)$. 

An ignited vertex $v$ is unburned with two burned neighbours of opposite colours at the end of some time step $t<\beta(v)$ if and only if it has two neighbours
$u,u'$ with $\nu(u)\neq\nu(u')$ and $\beta(u)=\beta(u')<\beta(v)$. Indeed, if the two burn times differed, then $v$ would have exactly one burned neighbour at the
end of the earlier of them and would be burned by spreading at the next time step, giving $\beta(v)\le\beta(u')\le t$. We call such a $v$ \emph{held}, and we call an ignited vertex that is not held \emph{free}. Note that $v$ is free when the tie occurs only at the spread step of time step $\beta(v)$.

\subsection{Proof of Theorem~\ref{thm:path-lower}}\label{sec:lower}
We will construct an ignition sequence witnessing the claimed lower bound.

The map $v_i\mapsto v_{n+1-i}$ is an isomorphism of $\Gamma$ onto the signed path with the reversed signature. It preserves $n$, the orders of the negative segments, the value of $\varepsilon$ and the plurality number. So we may assume throughout this subsection that
if  $S_1$ is a negative segment  with $m_1 \equiv 2  \, (\bmod \, 3)$,
then $\varepsilon=1$. 

Let $S=u_1u_2\cdots u_m$ be a segment, written from left to right, and define a list $L(S)$ of vertices of $S$ by
\[
  L(S)=\begin{cases}
    (u_1), &m=1 \text{ and either } |S'| \equiv 0 \, (\bmod \, 3) \text{ or } S=S_1,\\
    (\,), &m=1 \text{ otherwise},\\
    (u_2), &m=2,\\
    (u_2,u_5,\ldots,u_{m-1}), &m\ge 3 \text{ and } m\equiv 0 \, (\bmod \, 3),\\
    (u_2,u_5,\ldots,u_{m-2},u_m), &m\ge 4 \text{ and } m\equiv 1 \, (\bmod \, 3),\\
    (u_3,u_6,\ldots,u_{m-2},u_m), &m\ge 5 \text{ and } m\equiv 2 \, (\bmod \, 3).
  \end{cases}
\]
The entries of the progression are called the \emph{running entries} of $S$. The entry $u_m$, where present, is called the \emph{closing entry} of $S$. Thus the lists for $m=1$ and $m=2$ consist of a closing entry alone, and a running entry is never the last vertex of its segment. We record four properties, each immediate from the definition.

\smallskip
(K1) The list $L(S)$ has exactly $\lfloor m/3\rfloor$ running entries, and $L(S_1)$ is nonempty.

(K2) Consecutive entries of $L(S)$ are three places apart, except that a closing entry lies two places after its predecessor.

(K3) The first entry of $L(S)$, if there is one, is $u_1$, $u_2$ or $u_3$. It is $u_1$ only when $m=1$, and it is $u_3$ only when $m\ge 5$ and $m\equiv 2\, (\bmod \, 3)$.

(K4) The last entry of $L(S)$ is $u_m$, except when $m\equiv 0\, (\bmod \, 3)$, in which case it is $u_{m-1}$.
\smallskip

Let $L=(w_1,\ldots,w_\ell)$ be the concatenation of $L(S_1),L(S_2),\ldots,L(S_r)$, so that the entries of $L$ increase from left to right. Define a sequence $\mathcal{I}$ of ignitions on $\Gamma_+$ as follows. At time step $k\le\ell$ ignite $w_k$, with the colour $-\Phi_U(w_k)$ if $w_k$ is a running entry of a negative segment, and with the colour $\Phi_U(w_k)$ otherwise. Once $L$ is exhausted, ignite each held vertex $v$ with the colour $\Phi_U(v)$, one per time step. On $\Gamma$ itself this ignites the running entries of the negative segments as green and every other ignited vertex as yellow.  For an entry $w$ of $L$, let $c(w)$ be the colour that $\mathcal{I}$ prescribes for $w$, that is,
\begin{equation}\label{eq:cw}
c(w)=\begin{cases}
-\Phi_U(w), & \text{if $w$ is a running entry of a negative segment},\\
\phantom{-}\Phi_U(w), & \text{otherwise.}
\end{cases}
\end{equation}

We show that $\mathcal{I} \in \mathcal{B}(\Gamma_+)$ whose final colouring $\nu$ differs from $\Phi_U$ exactly at the running entries of the negative segments, and at $v_1$ when $\varepsilon=1$. 
Along the way we show that each $w_k$ is unburned at the ignition step of time step $k$. It is then ignited there, so $\nu(w_k)=c(w_k)$.

\begin{example}\label{ex:path-lower}
Let $n=11$ and $\sigma=(+,+,-,-,-,-,+,+,-,+)$. The segments are
$S_1=v_1v_2$ (positive, order $2$), $S_2=v_3v_4v_5v_6v_7$ (negative, order $5$), $S_3=v_8$ (positive, order $1$), $S_4=v_9v_{10}$ (negative, order $2$) and $S_5=v_{11}$ (positive, order $1$). Here $\varepsilon=0$ and the bound is
$11-\lfloor 5/3\rfloor-\lfloor 2/3\rfloor=10$.
 
The segments $S_1$ and $S_4$ have order $2$, so each contributes its second vertex. The segment $S_2$ has order $5\equiv 2\, (\bmod \, 3)$, so it contributes $u_3=v_5$ and then $u_5=v_7$. The segments $S_3$ and $S_5$ have order $1$ and are preceded by segments of order $\not\equiv 0\, (\bmod \, 3)$, so they contribute nothing. Hence $L=(v_2,v_5,v_7,v_{10})$. We display the process on $\Gamma$ itself, that is, the burning sequence $\mathcal I^U$ of Lemma~\ref{lem:switching_burning}. Its colours are those of $\mathcal I$ multiplied by $\Phi_U$. So a vertex is yellow exactly where $\nu$ agrees with $\Phi_U$. Each line shows one time step: the vertex ignited, and the state at the end of the step. Recall that the spread step comes before the ignition. Writing $\cdot$ for an unburned vertex:
\[
\begin{array}{lllllll}
t=1: & \text{ignite } v_2^{\,y} & \cdot y\cdot\cdot\cdot\cdot\cdot\cdot\cdot\cdot\cdot&\ \ \ &t=4: & \text{ignite } v_{10}^{\,y} & yyy\cdot gyyy\cdot y\cdot\\
t=2: & \text{ignite } v_5^{\,g} & yyy\cdot g\cdot\cdot\cdot\cdot\cdot\cdot
&\ \ \ &t=5: & \text{ignite } v_4^{\,y} & yyyygyyy\cdot yy\\
t=3: & \text{ignite } v_7^{\,y} & yyy\cdot gyy\cdot\cdot\cdot\cdot
&\ \ \ &
t=6: & \text{ignite } v_9^{\,y} & yyyygyyyyyy
\end{array}
\]
The final colouring has ten yellow vertices, meeting the bound. The vertices $v_4$ and $v_9$ are held, at the end of the time steps $2$ and $4$ respectively, and are ignited yellow at no cost in the last phase. 
\end{example}

\begin{lemma}\label{lem:delivery}
Let $w$ be an entry of $L$ and let $z$ be a neighbour of $w$. Then
\[
  c(w)\,\Phi_U(z)=
  \begin{cases}
    -1, &\parbox[t]{4.95in}{if $w$ is the closing entry of a negative segment and $z$ is its left neighbour,}\\
    \phantom{-}1, & \text{otherwise.}
\end{cases}
\]
In particular, if $z$ is burned by spreading with the colour $c(w)$, then
$\nu(z)=-\Phi_U(z)$ in the first case and $\nu(z)=\Phi_U(z)$ in the second.
\end{lemma}
\begin{proof}
Let $S=u_1\cdots u_m$ be the segment containing $w$. Write $c(w)=\theta\,\Phi_U(w)$, so that $\theta=-1$ exactly when $w$ is a running entry of a negative segment and $\theta=1$ otherwise, by \eqref{eq:cw}. Now
$
c(w)\,\Phi_U(z)=\theta\,\Phi_U(w)\Phi_U(z)=\theta\,\sigma(wz)
$. 

Suppose first that $w=u_j$ is a running entry. By the definition of $L(S)$ we have $2\le j\le m-1$, so both neighbours of $w$ lie in $S$ and $wz$ is an edge of $S$. Hence $\sigma(wz)=-1$ exactly when $S$ is negative, which is exactly when $\theta=-1$. In both cases $\theta\,\sigma(wz)=1$. 

Suppose now that $w=u_m$ is the closing entry, so that $\theta=1$ and $c(w)\Phi_U(z)=\sigma(wz)$. If $z$ is the right neighbour of $w$, then $z$ is the first vertex of the segment following $S$. So $wz$ joins two consecutive segments, and $\sigma(wz)=1$. Let $z$ be the left neighbour of $w$. If $m\ge 2$, then $z=u_{m-1}$ and $wz$ is an edge of $S$. So $\sigma(wz)=-1$ when $S$ is negative and $\sigma(wz)=1$ when $S$ is positive. If $m=1$, then $S$ is positive, since negative segments have order at least $2$. Then $z$ lies in the segment preceding $S$, so again $\sigma(wz)=1$. Thus $c(w)\Phi_U(z)=-1$ exactly in the stated case. 

The last assertion follows by using $\Phi_U(z)^2 = 1$.
\end{proof}

\begin{proof}[Proof of Theorem~\ref{thm:path-lower}]
Run $\mathcal{I}$ on $\Gamma_+$, and for $1\le k<\ell$ let $x_k$ be the right neighbour of
$w_k$, which exists because $w_k$ lies to the left of $w_{k+1}$. We prove by induction on
$k\le\ell-1$:

\smallskip
\noindent $(\mathrm{A}_k)$ $\beta(w_k)=k$ and $\beta(x_k)=k+1$. At the end of the spread step of time step $k+1$, every vertex to the left of $x_k$ is burned or held,
and no vertex to its right is burned. Finally, $\nu(z)=\Phi_U(z)$ for every burned vertex $z$ that is not a running entry of a negative segment.

\smallskip
\noindent Two amendments are needed though, both concerning $v_1$ and both affecting $(A_1)$ only. $(\mathrm{a})$ If $m_1\ge5$ and $m_1\equiv2\pmod 3$, that is, if $w_1$ is the third vertex of $S_1$, then $v_1$ is neither burned nor held at the end of the spread step of time step $2$, its burn time being $\beta(v_1)=3$. In this case $L(S_1)$ has a running and a closing entry, so $\ell\ge2$ and $v_1$ is burned by the end of the spread step of time step $\ell+1$. $(\mathrm{b})$ If $S_1$ is negative with $m_1\equiv2\pmod 3$, hence only if $\varepsilon=1$, then $\nu(v_1)=-\Phi_U(v_1)$.

Nothing is burned before the ignition step of time step~$1$. So $w_1$ is ignited there, $\beta(w_1)=1$ and $\nu(w_1)=c(w_1)$. At the spread step of time step $2$ the neighbours of $w_1$ are the only unburned vertices with a burned
neighbour, and each takes the colour $\nu(w_1)$. In particular, the right neighbour $x_1$ of $w_1$, if it exists, burns there with colour $\Phi_U(x_1)$, by Lemma~\ref{lem:delivery}. For $\ell\ge2$ this gives
$\beta(x_1)=2$ and $\nu(x_1)=\Phi_U(x_1)$. By (K3) we have $w_1=u_j$ in $S_1$ with $j\le3$.

Let $j=1$. Then $w_1=v_1$, and no vertex lies to its left. Let $j=2$. Then $\beta(v_1)=2$ and
$\nu(v_1)=\nu(w_1)$, which by Lemma~\ref{lem:delivery} equals $\Phi_U(v_1)$, unless $w_1$ is
the closing entry of a negative segment, that is unless $S_1$ is negative of order $2$. Let
$j=3$. Then $m_1\ge5$ and $m_1\equiv2\,(\bmod\,3)$ by (K3), so $\beta(u_2)=2$ and
$\nu(u_2)=\Phi_U(u_2)$ by Lemma~\ref{lem:delivery}, while $v_1$ has no burned neighbour. Here
$L(S_1)$ has a running entry and a closing entry, so $\ell\ge2$. At the spread step of time
step $3$ the only burned neighbour of $v_1$ is $u_2$, whence $\beta(v_1)=3$ and
$\nu(v_1)=\nu(u_2)=\Phi_U(u_2)$. The edge $v_1u_2$ lies in $S_1$, so this is $\Phi_U(v_1)$ when
$S_1$ is positive and $-\Phi_U(v_1)$ when $S_1$ is negative.

In both cases calling for the two amendments, $S_1$ is negative with $m_1\equiv2\,(\bmod\,3)$, as
announced. Moreover $\nu(w_1)=\Phi_U(w_1)$ unless $w_1$ is a running entry of a negative
segment, by \eqref{eq:cw}. So $(\mathrm{A}_1)$ holds when $\ell\ge2$. When $\ell=1$, the same computation gives the following at the end of the spread step of time step $2$. Every vertex to the left of $w_1$ is burned or held, and so is the right neighbour of $w_1$, if it exists. No other vertex is burned.

Let $1\le k<\ell$ and assume $(\mathrm{A}_k)$. Put $v_p=x_k$, let $w=w_{k+1}$ and let
$S=u_1\cdots u_m$ be the segment containing $w$. We claim that
\begin{equation}\label{eqn:position}
  w=v_{p+\delta} \text{ with } 1\le\delta\le 3, \text{ and } \delta=3 \text{ only if } w=u_3
  \text{ and } v_p \text{ is the vertex preceding } u_1 .
\end{equation}
Suppose first that $w$ is not the first entry of $L(S)$. Then its predecessor $w_k$ lies in
$S$. By (K2) it is three places to the left of $w$, or two when $w$ is the closing entry. As
$v_p$ is the right neighbour of $w_k$, we get $\delta=2$ or $\delta=1$.

Suppose now that $w$ is the first entry of $L(S)$. Then $S\ne S_1$ by (K1), so $S'$ exists. We
claim that $v_p$ is $u_1$ or the vertex preceding $u_1$. Let $L(S')$ be nonempty. Its last entry is $w_k$, which by (K4) is the last vertex of $S'$, or the second to last one when $3$ divides $|S'|$. So $v_p$ is $u_1$ in the first case and the last vertex of $S'$ in the second. Let $L(S')$ be empty. Then $S'$ has order $1$ and is positive, and $S'\neq S_1$ by (K1). So the segment preceding $S'$ exists and is negative, hence of order at least $2$, and its order is not divisible by $3$ by the definition of $L(S')$. By (K4) its list ends at its last vertex, that vertex is $w_k$, and $v_p$ is the vertex of $S'$. This proves the claim. By (K3) we have $w=u_j$ with $j\le3$. Hence $\delta=j-1$ when $v_p=u_1$, and $\delta=j$ when $v_p$ precedes $u_1$. It remains to show $\delta\neq 0$. By the above, $\delta=0$ would require $v_p=u_1$ and $j=1$. Suppose $j=1$. By (K3), $m=1$. Since $S\neq S_1$ and $L(S)$ is nonempty, the definition of $L(S)$ gives $|S'|\equiv 0 \pmod 3$. Thus $|S'|\ge 3$, so $L(S')$ is nonempty and its last entry is $w_k$. By
(K4), $w_k$ is the second to last vertex of $S'$. Hence $v_p$ is the last vertex of $S'$, which precedes $u_1$. So $v_p\neq u_1$, and $\delta\ge 1$. This proves \eqref{eqn:position}.

By $(\mathrm{A}_k)$ no vertex to the right of $v_p$ is burned at the end of the spread step of time step $k+1$. So $w$ is ignited at time step $k+1$, $\beta(w)=k+1$ and $\nu(w)=c(w)$. Moreover $\nu(w)=\Phi_U(w)$ unless $w$ is a running entry of a negative segment, by \eqref{eq:cw}. Consider the spread step of time step $k+2$. Three kinds of unburned vertex have a burned neighbour there. The held ones stay unburned. The $\delta-1\le2$ vertices between $v_p$ and
$w$ are treated below. The right neighbour $z$ of $w$, if it exists, has $w$ as its only burned neighbour, so it is burned at time step $k+2$ with $\nu(z)=\Phi_U(z)$, by
Lemma~\ref{lem:delivery}.

Let $\delta=1$. Then no vertex lies between $v_p$ and $w$. Let $\delta=3$. Then $w=u_3$ and
$v_p$ precedes $u_1$ by \eqref{eqn:position}. So the edge $v_pu_1$ joins two consecutive
segments, and $\Phi_U(u_1)=\Phi_U(v_p)$. At the start of this spread step $u_1$ has $v_p$ as
its only burned neighbour, and $u_2$ has $w$ as its only burned neighbour. Hence
$\nu(u_1)=\nu(v_p)=\Phi_U(u_1)$, and $\nu(u_2)=\nu(w)=\Phi_U(u_2)$ by
Lemma~\ref{lem:delivery}, $w$ being a running entry.

Let finally $\delta=2$. Then $v_{p+1}$ lies between the burned vertices $v_p$ and $w$, which
deliver the colours $\Phi_U(v_p)$ and $\nu(w)$. If the two deliveries are opposite, then
$v_{p+1}$ is held and there is nothing to prove. If they agree, then $v_{p+1}$ is burned with
the colour $\nu(w)$, and we must check that $\nu(w)=\Phi_U(v_{p+1})$. By
Lemma~\ref{lem:delivery} this holds unless $w$ is the closing entry of a negative segment $S$
and $v_{p+1}$ is its left neighbour. Assume that it is. Then $w$ is the first entry of $L(S)$. Otherwise, by (K2), the closing entry $w$ lies two places after its predecessor, which gives $\delta=1$. So $L(S)$ has no running entry, whence $m\le2$ by (K1), and $m=2$ because $S$ is negative. Thus $w=u_2$ and $v_{p+1}=u_1$. Also $v_p$ precedes $u_1$, because $\delta=2$, so the edge $v_pu_1$ joins two consecutive segments and $\Phi_U(v_p)=\Phi_U(u_1)$. On the other
hand $\nu(w)=\Phi_U(u_2)$ by \eqref{eq:cw}, and $\Phi_U(u_2)=-\Phi_U(u_1)$ because
the edge $u_1u_2$ lies in the negative segment $S$. The two deliveries are therefore opposite,
and this case does not arise.

This proves $(\mathrm{A}_{k+1})$ when $k+1\le\ell-1$. For $k=\ell-1$, the argument gives the following at the end of the spread step of time step $\ell+1$. Every vertex to the left of $w_\ell$ is burned or held, and so is the right neighbour of $w_\ell$, if it exists. No other vertex is burned. By the base case, the same holds when $\ell=1$.

Now $w_\ell\in\{v_{n-1},v_n\}$. Indeed, let $R$ be the segment containing $w_\ell$. Every
segment to the right of $R$ has an empty list, hence order $1$, and is therefore positive. Two consecutive segments are never both positive, so at most one segment lies to the right of $R$. If there is none, then $R=S_r$ and, by (K4), $w_\ell$ is the last or the second to last vertex of $R$. If there is one, say $R''$, then $R''=S_r$ has order $1$, so $R''=\{v_n\}$; moreover $L(R'')$ is empty with $R''\neq S_1$, so $3$ does not divide the order of $R$, and by
(K4) the vertex $w_\ell$ is the last vertex of $R$, that is $v_{n-1}$.

So every vertex is burned or held once the spread step of time step $\ell+1$ is over. The remaining ignitions therefore take only held vertices, one per time
step, and each such vertex $v$ is ignited with colour $\Phi_U(v)$. None of them causes a vertex to burn by spreading. Hence $\mathcal{I}\in\mathcal{B}(\Gamma_+)$.

By the above and \eqref{eq:cw}, the colouring $\nu$ differs from $\Phi_U$ exactly at the running entries of the negative segments and, in the amended case $\mathrm{(b)}$, at $v_1$. That case occurs exactly when $\varepsilon=1$. No running entry is the first vertex of its segment, so
$v_1$ is not one of them. By (K1) a negative segment of order $m$ has $\lfloor m/3\rfloor$
running entries, so
\[
d_H(\nu,\Phi_U)=\sum_{i=1}^{s}\Big\lfloor\frac{m_i}{3}\Big\rfloor+\varepsilon ,
\]
and \eqref{eqn:path_reduction} gives the stated bound.
\end{proof}

\subsection{Proof of Theorem~\ref{thm:upper}}\label{sec:upper}
Let $\mathcal{I}\in\mathcal{B}(\Gamma_+)$ be arbitrary and write $\nu=\nu_{\mathcal{I}}$. The \emph{blocks} of $\nu$ are the maximal subpaths of $P_n$ on which $\nu$ is constant; consecutive blocks have opposite colours, and a
block of order $1$ is a \emph{singleton block}. For a segment $S$, the \emph{pieces} of $\nu$ on $S$ are the nonempty intersections of the blocks with $V(S)$. They are consecutive subpaths of $S$, and consecutive pieces have opposite colours.

\begin{lemma}\label{lem:wave}
Let $v$ be a vertex of $P_n$.
\begin{enumerate}
\item[\textup{(W1)}] If $v$ is not ignited, then $\beta(v)=g(v)+1$, and every neighbour $u$ of $v$ with $\beta(u)=g(v)$ has $\nu(u)=\nu(v)$.
\item[\textup{(W2)}] If $v$ is free, then $\beta(v)\le g(v)$.
\item[\textup{(W3)}] Distinct ignited vertices have distinct values of $\beta$.
\end{enumerate}
\end{lemma}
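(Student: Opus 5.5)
The plan is to argue each item directly from the spread rule on $\Gamma_+$ as specialised to paths just before the statement, together with the definitions of $\beta$, $g$, held and free. Throughout I fix $\mathcal{I}\in\mathcal{B}(\Gamma_+)$ and use two basic facts.

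\begin{itemize}
\item \emph{Spreading needs a burned neighbour.} A vertex $v$ can burn by spreading at time $t$ only if some neighbour $u$ satisfies $\beta(u)\le t-1$. This is because $h_t(v)=\sum_{u\in N(v)}\nu_{t-1}(u)$ vanishes when no neighbour is burned. Hence $\beta(v)\ge g(v)+1$ for every non-ignited $v$.
\item \emph{Ignition needs an unburned vertex.} An ignited vertex is, by definition, still unburned at the ignition step of time $\beta(v)$.
\end{itemize}

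For (W1), let $v$ be non-ignited and put $t=g(v)$. At the spread step of time $t+1$, the burned neighbours of $v$ are exactly those $u$ with $\beta(u)=t$. There are one or two of them.

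\begin{itemize}
\item If there is one such neighbour, $v$ takes its colour at time $t+1$.
\item If there are two and they agree, $v$ takes their common colour at time $t+1$.
\item If there are two of opposite colour, $v$ stays unburned at $t+1$. Its burned neighbours then never change, since both sides of $v$ on the path are already burned. So every later spread step is again a tie, and $v$ can only become burned by ignition. This contradicts the assumption that $v$ is not ignited.
\end{itemize}

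Hence $\beta(v)=g(v)+1$, and $\nu(v)$ equals the colour of every neighbour $u$ with $\beta(u)=g(v)$.

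For (W2), let $v$ be ignited and suppose $\beta(v)>g(v)$. Let $u$ be a neighbour with $\beta(u)=g(v)=:t$. Then $v$ is unburned with a burned neighbour at the spread step of time $t+1\le\beta(v)$.

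\begin{itemize}
\item Suppose $v$ has only one neighbour burned by time $t$, or two neighbours of equal colour burned by time $t$. Then $v$ burns by spreading at time $t+1$. If $t+1<\beta(v)$, this contradicts $v$ being unburned at its ignition. If $t+1=\beta(v)$, $v$ is burned at the spread step of time $\beta(v)$ and so is not available for ignition at that step, again a contradiction.
\item Otherwise both neighbours have $\beta=t$ and opposite colours, with $t<\beta(v)$. This is exactly the definition of $v$ being held.
\end{itemize}

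So a free $v$ satisfies $\beta(v)\le g(v)$. The one subtle point is the boundary case $t+1=\beta(v)$, and it is resolved by noting that spreading precedes ignition within a time step.

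For (W3), exactly one vertex is ignited per ignition step. An ignited vertex is burned at the end of the step in which it is ignited, and it is never burned earlier. So its ignition time equals $\beta(v)$, and distinct ignited vertices occupy distinct time steps, giving distinct values of $\beta$.

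I expect the main obstacle to be the tie case in (W1) and (W2): one must argue carefully that an opposite-colour tie is permanent on a path, and relate held and free to the precise timing $t<\beta(v)$ versus $t=\beta(v)$. The remaining steps are routine unwinding of the definitions.
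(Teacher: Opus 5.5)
Your proof is correct and follows essentially the paper's argument. The paper likewise shows that if $\beta(v)\ge g(v)+1$, then either $v$ burns by spreading at time $g(v)+1$, so it cannot be ignited because spreading precedes ignition, or its two neighbours with $\beta=g(v)$ have opposite colours and the tie is permanent, so $v$ is ignited and held; (W1) and (W2) follow from this dichotomy, and (W3) from there being at most one ignition per step, the only difference being that you run the case analysis separately for (W1) and (W2) rather than once up front.
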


\begin{proof}
Put $g=g(v)$ and suppose that $\beta(v)\geq g+1$. Then $v$ is unburned before the spread step of time $g+1$, and its burned neighbours at that moment are the neighbours $u$ with $\beta(u)=g$. There is at least one such neighbour. If all of them have the same colour $c$, then $v$ burns by spreading at time $g+1$ with colour $c$. In that case $v$ is not ignited, because the ignition step only chooses unburned vertices. Otherwise $v$ has two neighbours $u,u'$ with $\beta(u)=\beta(u')=g$ and $\nu(u)\neq\nu(u')$. These are all the neighbours of $v$, and they remain burned, so their influences cancel at time $g+1$ and afterwards; and $v$ never burns by spreading. Hence $v$ is ignited, and it is held.

(W1) A vertex burned by spreading at time $t$ has a burned neighbour at the start of time $t$, so $\beta(v)\geq g+1$ when $v$ is not ignited. The first alternative above then occurs, and it gives both assertions.

(W2) If $v$ were ignited with $\beta(v)\geq g+1$, the first alternative would be impossible, so $v$ would be held. Hence a free vertex satisfies $\beta(v)\le g$.

(W3) At most one vertex is ignited per time step.
\end{proof}

\begin{lemma}\label{lem:short}
\begin{enumerate}[{\upshape(i)}]
\item If $\{x\}$ is a singleton block, then $x$ is free, and $\beta(x)\le\beta(u)$ for every neighbour $u$ of $x$.
\item Let $B$ be a block of order $3$ with middle $w$, put $M=\beta(w)$, let $u$ be an end of
$B$, and let $u'$ be the neighbour of $u$ outside $B$, assumed to exist. Then
$u$ is held if and only if $u$ is ignited with $\beta(u)\geq M+1$, and in that case $\beta(u')=M$.
If $u$ is not held, then $\beta(u)\le\beta(u')$; moreover $\beta(u)=M+1$ if $u$ is not ignited, and $\beta(u)\le M$ if $u$ is free. In particular, $\beta(u)\geq M+1$ if and only if $u$ is not free.
\item With $B$, $w$ and $M$ as in (ii), $w$ is never held, and $w$ is ignited if and only if $\beta(u)\ge M$
for both ends $u$ of $B$.
\item Let $u$ and $u'$ be adjacent vertices in different blocks. Suppose that each of $u$ and $u'$ is a singleton block or an end of a block of order 3. If neither $u$ nor $u'$ is held, then $\beta(u)=\beta(u')$. In particular, no two singleton blocks are adjacent.
\end{enumerate}
\end{lemma}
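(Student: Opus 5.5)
The proof should follow directly from Lemma~\ref{lem:wave} and the spread rule on $\Gamma_+$: an unburned vertex with exactly one burned neighbour copies its colour, and one with two burned neighbours of opposite colours stays unburned forever. Throughout we use the definitions of held and free, and the fact that a vertex with $\beta(v)\ge g(v)+1$ is either burned by spreading at time $g(v)+1$ or held (first paragraph of the proof of Lemma~\ref{lem:wave}).

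\emph{Part (i).} Let $\{x\}$ be a singleton block, so every neighbour $u$ of $x$ has $\nu(u)\ne\nu(x)$. If $x$ were not ignited, (W1) would give a neighbour $u$ with $\beta(u)=g(x)$ and $\nu(u)=\nu(x)$, a contradiction. So $x$ is ignited. If $x$ were held, some neighbour $u$ with $\beta(u)<\beta(x)$ would satisfy $\nu(u)=\nu(x)$, since the two tied neighbours carry opposite colours; again a contradiction. Hence $x$ is free, and (W2) gives $\beta(x)\le g(x)\le\beta(u)$ for every neighbour $u$.

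\emph{Part (ii).} Let $u$ be an end of $B$, with neighbours $w$ (same colour) and $u'$ (opposite colour). If $u$ is held, its two neighbours have equal burn times below $\beta(u)$, so $\beta(u')=M\le\beta(u)-1$, and $u$ is ignited. Conversely, suppose $u$ is ignited with $\beta(u)\ge M+1$. At time $M+1$ the vertex $u$ is unburned and sees $w$. If $\beta(u')>M$, then $u$ would copy $w$ by spreading at time $M+1$, contradicting that $u$ is ignited. If $\beta(u')<M$, then $u$ would burn from $u'$ before $M+1$, also a contradiction. So $\beta(u')=M$ and $u$ is held.

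If $u$ is not ignited, (W1) says the neighbours realizing $g(u)$ share $u$'s colour, so $g(u)$ is attained only at $w$. This forces $\beta(u)=M+1\le\beta(u')$, with equality impossible only if $\beta(u')=M$, which is excluded. If $u$ is free, (W2) gives $\beta(u)\le g(u)\le\min(M,\beta(u'))$. The ``in particular'' statement then collects these cases: $\beta(u)\ge M+1$ exactly when $u$ is not ignited or is held.

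\emph{Part (iii).} Both neighbours of $w$ lie in $B$ and share its colour, so $w$ cannot be held. If $w$ is ignited, it is free, and (W2) gives $M\le g(w)=\min\beta(u)$. Conversely, if $\beta(u)\ge M$ for both ends, suppose $w$ is not ignited. Then (W1) gives $\beta(w)=g(w)+1$, that is, $M=\min\beta(u)+1>\min\beta(u)$, contradicting the hypothesis.

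\emph{Part (iv).} This is the main obstacle. The plan is to show $\beta(u)\le\beta(u')$ from the hypotheses on $u$ and $\beta(u')\le\beta(u)$ symmetrically, which gives equality. If $u$ is a singleton block, part (i) gives $\beta(u)\le\beta(u')$. If $u$ is an end of a block of order $3$ and not held, part (ii) gives $\beta(u)\le\beta(u')$. The symmetric argument from $u'$ finishes. For the final claim, two adjacent singleton blocks $x,x'$ are both free by part (i), so $\beta(x)=\beta(x')$. But both are ignited, which contradicts (W3).

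The delicate points are checking that, in part (ii), the neighbour $u'$ exists where it is used, and handling the boundary case where $w$ has the relevant burn times equal.
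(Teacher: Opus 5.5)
Your proof is correct and follows essentially the same route as the paper. Parts (i), (iii) and (iv) match it step for step via (W1)--(W3). In (ii), your converse direction (the case split on $\beta(u')>M$ versus $\beta(u')<M$) unwinds the spread rule directly, where the paper instead invokes (W2) through $g(u)\le M<\beta(u)$. The wording of your not-ignited case in (ii) is a bit garbled, but the content is right: $g(u)$ is attained only at $w$, hence $\beta(u')\ge M+1=\beta(u)$.
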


\begin{proof}
\begin{enumerate}[{\upshape(i)}]
    \item Recall from (W1) that a vertex which is not ignited shares its colour with a neighbour. Since $n\ge 2$, the vertex $x$ has a neighbour, and every neighbour of $x$ has colour $-\nu(x)$. Hence $x$ is ignited. As its neighbours have equal colours, $x$ is not held. Now (W2) gives $\beta(x)\le g(x)$. 
    \item Let $d$ be the colour of $B$. The neighbours of $u$ are $w$ and $u'$, with $\nu(w)=d$ and $\nu(u')=-d$, since $u'$ lies in the block next to $B$. So $u$ is held exactly when $u$ is ignited and $\beta(u')=M<\beta(u)$. In particular, a held vertex $u$ is ignited with $\beta(u)\geq M+1$ and $\beta(u')=M$. Conversely, if $u$ is ignited with $\beta(u)\geq M+1$, then $g(u)\le M<\beta(u)$, so $u$ is not free by (W2), that is, $u$ is held. Now let $u$ be not held. If $u$ is not ignited, (W1) gives $\beta(u)=g(u)+1$, and every neighbour $z$ of $u$ with $\beta(z)=g(u)$ has colour $d$. So $\beta(u')\neq g(u)$. Hence $g(u)=M<\beta(u')$ and $\beta(u)=M+1\le\beta(u')$. If $u$ is free, (W2) gives $\beta(u)\le g(u)=\min\{M,\beta(u')\}$. The last assertion follows, because a vertex that is not free is either held or not ignited.
    \item Both neighbours of $w$ have colour $d$, so $w$ has no two neighbours of opposite
    colours and is never held. If $w$ is ignited, then $w$ is free, and (W2) gives $M\le\beta(u)$ for both ends $u$. If $w$ is not ignited, (W1) gives $M=g(w)+1$, so some end $u$ has $\beta(u)=M-1<M$.
    \item Note that $u'$ is the neighbour of $u$ outside the block of $u$, and vice versa. So (i) or (ii), applied to $u$, gives $\beta(u)\le\beta(u')$. Applied to $u'$, it gives $\beta(u')\le\beta(u)$. For the last assertion, let $u$ and $u'$ be adjacent singleton blocks. Both are free by (i), so they are distinct ignited vertices with $\beta(u)=\beta(u')$, contrary to (W3). \qedhere
\end{enumerate}
\end{proof}

\begin{lemma}\label{lem:chain}
Let $\{x\}$ and $\{y\}$ be singleton blocks, with $x$ to the left of $y$ and no singleton block between them. Then $x$ and $y$ are not adjacent, and some block between $x$ and $y$ has order $2$ or at least $4$.
\end{lemma}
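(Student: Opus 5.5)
The first assertion is immediate from Lemma~\ref{lem:short}(iv): two adjacent singleton blocks are impossible. For the second, I argue by contradiction. Suppose every block strictly between $x$ and $y$ has order $3$; none has order $1$ by the choice of $x$ and $y$. By the first assertion there is at least one such block. So the stretch from $x$ to $y$ reads $x, B_1, \ldots, B_q, y$ with $q\ge1$. Each $B_i$ has left end $a_i$, middle $w_i$, right end $b_i$, and $M_i=\beta(w_i)$. The junction pairs $(x,a_1)$, $(b_i,a_{i+1})$ and $(b_q,y)$ consist of adjacent vertices in different blocks, each a singleton block or an end of an order-$3$ block. Lemma~\ref{lem:short}(i)--(iv) therefore controls every burn time along the stretch.

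The core of the plan is a local analysis at each junction, carried out left to right. At the junction $(x,a_1)$, Lemma~\ref{lem:short}(i) says $x$ is free and $\beta(x)\le\beta(a_1)$. There are two cases.
\begin{itemize}
\item If $a_1$ is held, Lemma~\ref{lem:short}(ii) gives $\beta(x)=M_1$.
\item If $a_1$ is not held, Lemma~\ref{lem:short}(iv) gives $\beta(a_1)=\beta(x)$. By (W3), $a_1$ cannot then be ignited, since $x$ is ignited. So Lemma~\ref{lem:short}(ii) gives $\beta(a_1)=M_1+1$.
\end{itemize}
In both cases $a_1$ is not free, and $\beta(x)\in\{M_1,M_1+1\}$. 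Next I pass through $B_1$ using Lemma~\ref{lem:short}(iii). Since $\beta(a_1)\ge M_1+1$ and $w_1$ cannot be later than both ends unless it is ignited, the right end $b_1$ must supply the fire to $w_1$. Either $w_1$ is ignited, which needs $\beta(b_1)\ge M_1$, or $\beta(b_1)=M_1-1$ with $b_1$ free. This yields an invariant that I will propagate by induction on $i$:

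$(\ast)$ the left end $a_i$ is not free, and the right end $b_i$ is forced to satisfy $\beta(b_i)\le M_i$.

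By Lemma~\ref{lem:short}(ii), $\beta(b_i)\le M_i$ means exactly that $b_i$ is free. Then, at the junction $(b_i,a_{i+1})$, Lemma~\ref{lem:short}(ii) and (iv) together with (W3) should again force $a_{i+1}$ to be not free. This is the same argument as at $x$, with the free vertex $b_i$ playing the role of $x$, and it re-establishes $(\ast)$ for $i+1$.

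At the last junction $(b_q,y)$ I expect a contradiction. There $b_q$ is free by $(\ast)$ and $y$ is free by Lemma~\ref{lem:short}(i). Both are ignited and neither is held, so Lemma~\ref{lem:short}(iv) gives $\beta(b_q)=\beta(y)$, contradicting (W3).

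The main obstacle is making the passage through a block in $(\ast)$ airtight. When $w_i$ is ignited, Lemma~\ref{lem:short}(iii) only gives $\beta(b_i)\ge M_i$, not $\beta(b_i)\le M_i$. So I must separately exclude the possibility that both ends of $B_i$ are non-free while $w_i$ is ignited. My first attempt would be to combine this with the junction equalities: from $\beta(x)\in\{M_1,M_1+1\}$ and the chain of equalities or held-relations $\beta(b_i)=M_{i+1}$, I would show that the burn times along the stretch strictly decrease. Applying the mirror-image argument from $y$ leftwards gives burn times that strictly increase, and comparing the two directions yields the contradiction.

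If this bookkeeping becomes unwieldy, I will fall back on a minimality argument. Take an interior vertex $z$ of minimal $\beta$. By (W1), $z$ must be ignited, because its earlier neighbour would have to be $x$ or $y$, which have the opposite colour. Then I would count the ignitions needed per order-$3$ block against the available distinct time steps given by (W3).
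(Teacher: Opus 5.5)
\textbf{There is a genuine gap.} Your first assertion and your analysis of the junction at $x$ are correct. The induction breaks exactly where you feared, though, and not merely for lack of a proof: the invariant $(\ast)$ is false as a statement about the part of the stretch to the left of $b_i$, so no left-to-right local argument can establish it.

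Run $\Gamma_+$ on $P_7$ with the burning sequence $(v_7^{g},v_3^{y},v_1^{g})$. At time $2$ the vertex $v_6$ burns green. At time $3$ the vertices $v_2,v_4$ burn yellow from $v_3$, the vertex $v_5$ burns green from $v_6$, and then $v_1$ is ignited green. The final colouring is $g\,y\,y\,y\,g\,g\,g$, so $\{v_1\}$ is a singleton block followed by the order-$3$ blocks $\{v_2,v_3,v_4\}$ and $\{v_5,v_6,v_7\}$. Here $a_1=v_2$ is not free, as your junction analysis predicts. But $w_1=v_3$ is ignited with $M_1=2$, and $b_1=v_4$ burns by spreading at time $3>M_1$, so $b_1$ is not free.

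There is no second singleton block here, so this does not contradict the lemma. It does show that only the eventual $y$ can exclude this configuration, and a local step at $B_1$ never sees $y$. The same example has $\beta(v_3)=\beta(v_6)=2$, so the burn times of the middles need not decrease strictly. Your first fallback would therefore need a different monotone quantity, and the second fallback is not yet an argument.

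\textbf{The missing idea is to weaken $(\ast)$.} Instead of asking that $b_i$ be ignited, ask only that $\beta(b_i)$ be a used time, i.e.\ equal to $\beta(z)$ for some ignited $z\in\{x\}\cup B_1\cup\cdots\cup B_i$. In the example, $\beta(v_4)=3=\beta(v_1)$.

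This is all your junction step needs, with $x$ in the role of $b_0$. Suppose $a_{i+1}$ is not held. If $b_i$ is not held, then $\beta(a_{i+1})=\beta(b_i)$. If $b_i$ is held, then $\beta(a_{i+1})=M_i$, and $w_i$ is ignited by Lemma~\ref{lem:short}(ii),(iii), since $a_i$ is not free. Either way $\beta(a_{i+1})$ is a used time, so (W3) in Lemma~\ref{lem:wave} forbids $a_{i+1}$ from being free.

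The weakened invariant also survives your bad case. If $b_{i+1}$ is ignited there is nothing to show. Otherwise both ends of $B_{i+1}$ are non-free, so $w_{i+1}$ is ignited. Then $a_{i+1}$ cannot be held: Lemma~\ref{lem:short}(ii) would give $\beta(b_i)=M_{i+1}=\beta(w_{i+1})$, which contradicts (W3) because $\beta(b_i)$ is a used time and $w_{i+1}$ is ignited. Hence $a_{i+1}$ is not ignited, and $\beta(b_{i+1})=M_{i+1}+1=\beta(a_{i+1})$, which is a used time by the junction step.

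Applying the junction step at $(b_q,y)$ then shows that $y$ is not free, contradicting Lemma~\ref{lem:short}(i). This is precisely the paper's proof, where the sets $T_i$ record the used times.
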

\begin{proof}
By Lemma~\ref{lem:short}(iv), $x$ and $y$ are not adjacent. So the vertices strictly between them form blocks $B_1,\ldots,B_k$, listed from left to right, where $k\ge1$ and each $B_i$ has order more than 1. Suppose, for a contradiction, that all of them have order $3$, and write $B_i=\{a_i,b_i,c_i\}$ from left to right. Put $c_0=x$, $a_{k+1}=y$ and $M_i=\beta(b_i)$ for $1\le i\le k$. For $0\le i\le k$ the vertices $c_i$ and $a_{i+1}$ are adjacent and lie in different blocks, and each of them is a singleton block or an end of a block of order $3$. So Lemma~\ref{lem:short}~(ii) and (iv) apply to the pair $c_i,a_{i+1}$.

Let $X_i=\{c_0\}\cup B_1\cup\cdots\cup B_i$, and let $T_i$ be the set of values $\beta(z)$ over the ignited vertices $z\in X_i$. By (W3),
\begin{equation}\label{eq:chain_distinct}
  \text{if } z\notin X_i \text{ is ignited, then } \beta(z)\notin T_i .
\end{equation}
Moreover, for $1\le i\le k$,
\begin{equation}\label{eq:chain_middle}
  \text{if neither } a_i \text{ nor } c_i \text{ is free, then } b_i \text{ is ignited, and hence } M_i\in T_i .
\end{equation}
Indeed, the last assertion of Lemma~\ref{lem:short}~(ii) gives $\beta(a_i)\geq M_i+1$ and $\beta(c_i)\geq M_i+1$, and Lemma~\ref{lem:short}~(iii) then shows that $b_i$ is ignited.

We prove by induction on $i\in\{0,1,\ldots,k\}$ the two statements
\[
  (\mathrm{A}_i)\ \ a_i \text{ is not free, when } i\ge 1;
  \qquad\qquad
  (\mathrm{B}_i)\ \ \beta(c_i)\in T_i .
\]
Statement $(\mathrm{A}_0)$ is empty, and $(\mathrm{B}_0)$ holds because $c_0=x$ is ignited by Lemma~\ref{lem:short}(i).

Assume $(\mathrm{A}_i)$ and $(\mathrm{B}_i)$ for some $0\le i\le k$. We first show
\begin{equation}\label{eq:chain_next}
  \text{if } a_{i+1} \text{ is not held, then } \beta(a_{i+1})\in T_i .
\end{equation}
If $c_i$ is not held, Lemma~\ref{lem:short}(iv) gives $\beta(a_{i+1})=\beta(c_i)$, which lies in $T_i$ by $(\mathrm{B}_i)$. If $c_i$ is held, then $i\ge1$, since $c_0$ is free. Lemma~\ref{lem:short}(ii) gives $\beta(a_{i+1})=M_i$. As $c_i$ is held and $a_i$ is not free by $(\mathrm{A}_i)$, \eqref{eq:chain_middle} gives $M_i\in T_i$.

Next, $a_{i+1}$ is not free. Otherwise $a_{i+1}$ is ignited and not held, so $\beta(a_{i+1})\in T_i$ by \eqref{eq:chain_next}. Since $a_{i+1}\notin X_i$, this contradicts \eqref{eq:chain_distinct}. When $i<k$ this is $(\mathrm{A}_{i+1})$.

Let $i<k$. It remains to prove $(\mathrm{B}_{i+1})$. If $c_{i+1}$ is ignited, then $\beta(c_{i+1})\in T_{i+1}$ by definition. So assume $c_{i+1}$ is not ignited. Then $c_{i+1}$ is not free, and \eqref{eq:chain_middle} together with $(\mathrm{A}_{i+1})$ shows that $b_{i+1}$ is ignited, with $\beta(b_{i+1})=M_{i+1}$. If $a_{i+1}$ were held, Lemma~\ref{lem:short}(ii) would give $\beta(c_i)=M_{i+1}=\beta(b_{i+1})$. This is impossible by \eqref{eq:chain_distinct}, since $\beta(c_i)\in T_i$ and $b_{i+1}\notin X_i$. So $a_{i+1}$ is not held, and since it is not free, it is not ignited. Lemma~\ref{lem:short}(ii), applied to $a_{i+1}$ and to $c_{i+1}$, gives $\beta(a_{i+1})=M_{i+1}+1=\beta(c_{i+1})$. By \eqref{eq:chain_next}, $\beta(c_{i+1})=\beta(a_{i+1})\in T_i\subseteq T_{i+1}$. This completes the induction.

Now $(\mathrm{A}_k)$ and $(\mathrm{B}_k)$ hold, so the argument above with $i=k$ shows that $a_{k+1}=y$ is not free. This contradicts Lemma~\ref{lem:short}(i).
\end{proof}

We are now in a position to prove Lemma~\ref{lem:odd}, the final missing piece to the proof of Theorem~\ref{thm:upper}.

\begin{lemma}\label{lem:odd}
Let $S$ be a negative segment of order $m$, put $e=e(S)$, and let $q$ be the number of pieces of $\nu$ on $S$ of odd order. Then $3q\le m+6-2e$.
\end{lemma}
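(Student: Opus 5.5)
The plan is a weighted counting argument over the pieces of $\nu$ on $S$. I expect to need only Lemma~\ref{lem:chain}, together with the observation that a piece not containing an end of $S$ is a whole block.

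Write the pieces of $\nu$ on $S$ from left to right as $Q_1,\ldots,Q_p$, so that $\sum_j|Q_j|=m$.

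\emph{Step 1: which pieces are blocks.} A block meeting $V(S)$ but not contained in it must contain an end vertex of $S$, since blocks are subpaths. Hence every interior piece $Q_j$ with $1<j<p$ is an entire block of $\nu$. The end piece $Q_1$ is a whole block when the first vertex of $S$ is $v_1$, and likewise $Q_p$ when the last vertex of $S$ is $v_n$. So at least $e$ of the end pieces are whole blocks. Call a piece \emph{special} if it is a whole block; the number of non-special pieces is then at most $2-e$.

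\emph{Step 2: the deficit.} Give each piece $Q$ the deficit $\mathrm{def}(Q)=3[\,|Q|\text{ odd}\,]-|Q|$. Then
\[
3q-m=\sum_j \mathrm{def}(Q_j),
\]
so it suffices to show that this sum is at most $6-2e$. The deficit is at most $2$ for every piece, and it is positive only for pieces of order $1$. Pieces of order $3$ have deficit $0$. Pieces of order $2$, of even order at least $4$, and of odd order at least $5$ have deficit at most $-2$.

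\emph{Step 3: compensating singletons.} Consider the special pieces of order $1$, which are exactly the singleton blocks inside $S$ counted among the pieces. List them as $x_1,\ldots,x_k$ from left to right. For each $i<k$, the singleton blocks $x_i$ and $x_{i+1}$ have no singleton block between them, and every block strictly between them lies inside $S$, hence is an interior piece. Lemma~\ref{lem:chain} therefore supplies a piece between $x_i$ and $x_{i+1}$ of order $2$ or at least $4$, whose deficit is at most $-2$. These compensating pieces lie in pairwise disjoint gaps, so they are distinct, and none of them is one of the $x_i$. Consequently the special pieces contribute a total deficit of at most $2$: only the leftmost singleton is uncompensated, and if $k=0$ the contribution is at most $0$. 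The non-special pieces contribute at most $2$ each, hence at most $2(2-e)$. The total deficit is thus at most $2+4-2e=6-2e$, which gives $3q\le m+6-2e$.

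\emph{Main obstacle.} The delicate point is the bookkeeping in Step 3. I must check that the block supplied by Lemma~\ref{lem:chain} really is a piece on $S$, that it is distinct from the end pieces charged in the non-special bound, and that it is not reused across different gaps. This needs care when an end piece is itself a singleton block, which happens exactly in the cases with $e\ge1$. The negativity of $S$ does not seem to be needed for this inequality. I expect it to enter only later, when Lemma~\ref{lem:odd} is combined with the alternation of $\Phi_U$ on $S$ to bound $d_H(\nu,\Phi_U)$.
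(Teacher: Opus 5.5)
Your argument is correct and is essentially the paper's proof. Your deficit $3[\,|Q|\text{ odd}\,]-|Q|$ is the negative of the paper's weight $w(L)$, the compensating pieces between consecutive singleton blocks come from Lemma~\ref{lem:chain} exactly as in the paper, and your bound of at most $2-e$ non-special pieces, each with deficit at most $2$, plays the role of the paper's inequality $s_1\le t+2-e$. There is one harmless slip: when $p=1$, the piece $Q_1$ need not be a whole block even if the first vertex of $S$ is $v_1$, because the block can continue past the last vertex of $S$. The count of non-special pieces is still at most $2-e$ in that case, since there is only one piece and $e=2$ forces $S=P_n$.
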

\begin{proof}
Write $S=u_1u_2\cdots u_m$ from left to right, and let $L_1,\ldots,L_p$ be the orders of the pieces of $\nu$ on $S$. For a positive integer $L$ put $w(L)=L-3$ if $L$ is odd and $w(L)=L$ if $L$ is even. Since $\sum_j L_j=m$, we have $\sum_j w(L_j)=m-3q$, so it suffices to show that $\sum_j w(L_j)\ge 2e-6$. Note that $w(1)=-2$, $w(3)=0$, and $w(L)\ge 2$ for every other $L$.

Let $s_1$ be the number of pieces of order $1$
and let $t$ be the number of singleton blocks contained in $S$. Every singleton
block contained in $S$ is a piece of order $1$. Conversely, a piece of order $1$ at $u_j$ with $2\le j\le m-1$ is a singleton block, since the pieces on both sides of it have the opposite colour. The same holds at $u_1$ if $u_1=v_1$, and at $u_m$ if $u_m=v_n$, since then that vertex has only one neighbour, and this neighbour lies in $S$. The positions $u_1$ and $u_m$ are
distinct, and exactly $2-e$ of them fail to be endpoints of $P_n$. Hence
$s_1\le t+2-e$.

Let $\{x_1\},\ldots,\{x_t\}$ be the singleton blocks contained in $S$, from left to right. A singleton block between $x_i$ and $x_{i+1}$ would lie in $S$, so there is none. By Lemma~\ref{lem:chain}, for each $i<t$ some block $C_i$ between $x_i$ and $x_{i+1}$ has order $2$ or at least $4$. This block lies in $S$, so it is a piece, and $w(|C_i|)\ge 2$. The pieces $C_1,\ldots,C_{t-1}$ are distinct, and none of them has order $1$. Every other piece has weight $-2$ if it has order $1$, and weight at least $0$ otherwise. Therefore
\begin{align*}
  \sum_j w(L_j)&\;\ge\;-2s_1+2\max\{t-1,0\}\;\ge\;-2s_1+2(t-1)
  \;\ge\;-2s_1+2(s_1-2+e)-2\\ &\;=\;2e-6. \qedhere
\end{align*} 
\end{proof}

With Lemma~\ref{lem:odd}, we can prove Theorem~\ref{thm:upper}.

\begin{proof}[Proof of Theorem~\ref{thm:upper}]
By \eqref{eqn:path_reduction} it suffices to show that $d_H(\nu,\Phi_U)\ge\sum_{i}\lfloor (m_i-1+e_i)/3\rfloor$ for the final colouring $\nu$ of every $\mathcal{I}\in\mathcal{B}(\Gamma_+)$. The negative segments are pairwise vertex disjoint. So it is enough to consider a single negative segment $S$ of order $m$, with $e=e(S)$. Let $D_S$ be the number of vertices $v\in V(S)$ with $\nu(v)\neq\Phi_U(v)$. We
show that $D_S\ge\lfloor(m-1+e)/3\rfloor$.

Let $L_1,\ldots,L_p$ be the orders of the pieces of $\nu$ on $S$, and let $q$ be the number of odd $L_j$. On a piece, $\nu$ is constant while $\Phi_U$
alternates. So $\nu$ and $\Phi_U$ disagree on at least $\lfloor L_j/2\rfloor$ vertices of the $j$th piece. Since $\sum_j L_j=m$, and $L_j-2\lfloor L_j/2\rfloor$ equals $1$ if $L_j$ is odd and $0$ otherwise, Lemma~\ref{lem:odd} gives
\[
  D_S\;\ge\;\sum_{j=1}^{p}\Big\lfloor\frac{L_j}{2}\Big\rfloor\;=\;\frac{m-q}{2}\;\ge\;\frac12\Big(m-\frac{m+6-2e}{3}\Big)\;=\;\frac{m+e-3}{3}.
\]
As $D_S$ is an integer,
$D_S\ge\lceil (m+e-3)/3\rceil=\lfloor (m+e-1)/3\rfloor$, as required.
\end{proof}

\section{Computational Complexity}\label{sec:complexity}

In this section, we study the hardness of various graph burning problems on signed graphs. By varying the constraints placed on the graph structure and the burning process, we obtain a family of four related problems. The underlying graph could be directed or undirected, and the deadline by which the burning process is terminated could be unbounded (that is, the process continues until the entire graph is burned) or bounded below $n$, the number of vertices. 

We first highlight the changes needed in the burning process (see Definition \ref{defn:signed_burning}) to accommodate the directed graphs and deadlines. Let $\Gamma = (G, \sigma)$ be a signed graph, and let $k$ denote the deadline. If there are no deadlines, $k$ is simply set to $n$. In the case of a directed $G$, the fire can only spread in the direction of the arcs of $G$. To reflect that, we modify the spread rule by replacing $N^{+}(v)$ with $N_{\textsf{in}}^{+}(v)$ and $ N^{-}(v)$ with $ N_{\textsf{in}}^{-}(v)$ when calculating $s_t^{y}(v)$ and $s_t^{g}(v)$, the yellow and green influence of an unburned vertex $v$. Here, $N_{\textsf{in}}(v)$ denotes the set of vertices with a directed arc towards $v$. This process terminates either after the ignition step for time $t = k$ or when every vertex is burned yellow or green.

This section focuses on the plurality parameter, which we generalize here.

\begin{definition}[Generalized Plurality]
Let $\Gamma = (G,\sigma)$ be a signed graph, and let $k$ be the deadline for the burning process. Then the plurality number is
    \begin{align*}
        Y_k(\Gamma) \;=\; \max\nolimits_{\mathcal{I}\,\in\,\mathcal{B}_{\leq k}(\Gamma)}\left(\max\bigl\{|\mathcal{Y}_{\mathcal{I}}|,\, |\mathcal{G}_{\mathcal{I}}|\bigr\}\right),
    \end{align*}
where $\mathcal{B}_{\leq k}(\Gamma)$ denotes the set of all burning sequences of $\Gamma$ of length at most $k$.
\end{definition}
In the settings without deadlines, the definition of plurality remains unchanged (see Definition \ref{defn:parameters}). We study the problem of maximizing $Y(\Gamma)$ in settings without a deadline (for both directed and undirected graphs $G$), and maximizing $Y_k(\Gamma)$ for the settings with a deadline (again, for both directed and undirected $G$). These problems form the hierarchy depicted in Figure \ref{fig:burning-hierarchy}, ordered by Cook-reducibility. Our main objective from studying the complexity of the hierarchy is to show the hardness of computing $Y(\Gamma$) exactly in polynomial time for any given undirected signed graph $\Gamma$. 

\begin{figure}[t]
  \centering
  \resizebox{0.7\textwidth}{!}{
  \begin{tikzpicture}[
      node distance = 7mm and 11mm,
      box/.style = {draw, rounded corners, align=center,
                    inner sep=6pt, minimum height=10mm, font=\small},
      redge/.style = {-{Stealth[length=2mm]}, thick},
    ]
    \node[box] (top)   {Directed Signed Burning\\with Deadlines};
    \node[box] (left)  [below left=of top]  {Directed Signed Burning\\without Deadlines};
    \node[box] (right) [below right=of top] {Undirected Signed Burning\\with Deadlines};
    \node[box] (bot)   [below right=of left] {Undirected Signed Burning\\without Deadlines};

    \draw[redge] (left)  -- (top);
    \draw[redge] (right) -- (top);
    \draw[redge] (bot)   -- (left);
    \draw[redge] (bot)   -- (right);

  \end{tikzpicture}}
  \caption{Hierarchy of plurality optimization problems on signed graph burning problems.
  An arrow $A \to B$ indicates that $A$ Cook-reduces to $B$, so hardness transfers upward.}
  \label{fig:burning-hierarchy}
\end{figure}

First, we establish the hardness for the top of this hierarchy, computing $Y_k(\Gamma)$ for any given directed signed graph $\Gamma = (G, \sigma)$ and a deadline of $k$. In fact, we prove a stronger result by showing the hardness of approximation for the unsigned graph $G$, or equivalently, when all the signs in $\sigma$ are positive.

\begin{theorem}\label{thm:DSGBwD_hardness}
Unless $\textsf{P} = \textsf{NP}$, there is no polynomial-time $(1 - \frac{1}{e} + \varepsilon)$-approximation algorithm for computing $Y_k(\Gamma)$ for any $\varepsilon > 0$, given a directed signed graph $\Gamma$ and integer $k$ as input.
\end{theorem}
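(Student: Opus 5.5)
We plan a gap-preserving reduction from Maximum Coverage, which by Feige's theorem cannot be approximated within $1-\frac1e+\varepsilon$ in polynomial time unless $\textsf{P}=\textsf{NP}$. An instance is a universe $X=\{x_1,\dots,x_N\}$, a family $\mathcal{F}=\{F_1,\dots,F_M\}$ of subsets of $X$, and an integer $k$; the goal is to choose $k$ sets covering as many elements as possible. We take all signs positive, so $\Gamma=\Gamma_+$ on a directed graph $G$.

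\textbf{Construction.} The graph $G$ is bipartite, with a vertex $f_j$ for each set $F_j$ and a ``heavy'' gadget for each element $x_i$. The gadget for $x_i$ is an in-tree rooted at $x_i$ with $W$ further vertices, each receiving an arc from $x_i$, so $x_i$ passes its fire on to them. We add an arc $f_j\to x_i$ whenever $x_i\in F_j$. The vertices $f_j$ have no in-arcs, so they can only be burned by ignition. The deadline is $k+1$: the process stops after the ignition step of time $k+1$. The spread of the last round still happens, but the final ignition adds at most one vertex.

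\textbf{Analysis.} Because every sign is positive and in-influence only ever comes from burned vertices, every vertex ever burned receives the colour of its ignition sources. Ignite $k$ vertices $f_j$, all yellow, at times $1,\dots,k$. For element gadgets, arrival requires two hops ($f_j\to x_i\to$ leaves), which needs time; to avoid timing issues we instead put the heavy weight directly as $W$ parallel copies $x_i^{(1)},\dots,x_i^{(W)}$, each receiving arcs from every $f_j$ with $x_i\in F_j$. A copy burns at the spread step after its first in-neighbour burns. Ignitions occur at times $\le k$, and spread at time $k+1$ then reaches all copies of covered elements. Colours never conflict when all ignitions are yellow, and using green never helps for the yellow count: by the symmetry of Lemma~\ref{lem:symmetry}, we may take the majority colour to be yellow, and green ignitions can only cancel yellow influence.

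\textbf{Correctness.} We need two bounds.
\begin{itemize}
\item[] \emph{Completeness:} a cover of $c$ elements yields $Y_{k+1}(\Gamma)\ge cW$.
\item[] \emph{Soundness:} any burning sequence of length at most $k+1$ yields $|\mathcal{Y}|\le W\cdot c^\ast+(k+1)+M$, where $c^\ast$ is the coverage achievable by $k+1$ sets.
\end{itemize}
For soundness, there are two cases. If an ignition goes to a copy vertex, it contributes only $1$. If it goes to some $f_j$, it contributes that set's coverage. At most $k$ ignitions happen before the final spread, so only $k$ sets' worth of elements can be reached, and the $(k+1)$-th ignition adds a single vertex. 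Hence the yellow count is at most $W\cdot\mathrm{OPT}_k+k+1+M$. Choosing $W$ polynomially large, say $W=(N+M)^2$, makes the additive terms a negligible fraction of $W\cdot\mathrm{OPT}_k$; note that $\mathrm{OPT}_k\ge1$ in nontrivial instances.

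\textbf{Conclusion and main obstacle.} A $(1-\frac1e+\varepsilon)$-approximation for $Y_k$ would therefore give a $(1-\frac1e+\varepsilon/2)$-approximation for Maximum Coverage, which is impossible unless $\textsf{P}=\textsf{NP}$. The main obstacle is the soundness bookkeeping. We must verify that a mixed-colour strategy cannot outperform the all-yellow coverage structure. When yellow and green sources both point into a copy, that copy may stay unburned or take the majority colour, but it is still counted only within covered elements. We must also verify that the tie-break rule and the timing at the deadline do not allow extra vertices beyond those reached by $k$ chosen sets. We would handle this by charging each yellow copy to a yellow-ignited set vertex $f_j$ that is its in-neighbour, as it must have one.
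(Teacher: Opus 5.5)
Your proposal is correct and is essentially the paper's proof. Both use the same all-positive bipartite digraph, with many parallel copies of each element vertex fed by the set vertices, a deadline of $k+1$, and Feige's $(1-\frac{1}{e})$-hardness for Maximum Coverage. Your charging of each yellow copy to a yellow set vertex ignited by time $k$ gives $W\cdot\mathrm{OPT}_k\le Y_{k+1}(\Gamma)\le W\cdot\mathrm{OPT}_k+k+1$; the extra $+M$ is unnecessary, since set vertices burn only by ignition. The paper, using $\ell=(k+1)^2$ copies, shows this upper bound is attained exactly.

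When you write it up, drop the abandoned in-tree gadget. Also drop the stray $c^\ast$ defined via $k+1$ sets: only the $k$ ignitions before the final spread matter, so the bound must be stated in terms of $\mathrm{OPT}_k$.
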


\begin{proof}
We reduce from the Maximum Coverage problem. An instance consists of a set system $(U, \cS)$ and an integer $k$, where $U = \{e_1, e_2, \ldots, e_n \}$ is the ground set and $\cS = \{S_1, S_2, \ldots, S_m \}$ is a family of non-empty subsets of $U$. The task is to select a subfamily $\cS^* \subseteq \cS$ with $\abs{S^*} = k$ maximizing $\abs{\bigcup_{S \in \mathcal{S^*}} S}$. Let $\textsf{OPT}_\textsf{MC}$ denote the maximum. We may assume that $k < m$, otherwise $\cS^* = \cS$. Given such an instance, we construct the directed signed graph $\Gamma = (D, \sigma)$. Label the sets in $\cS$ with indices from $[m]$ and let $\ell = (k + 1)^2$. Consider a bipartite directed graph $D = (V, A)$ with $V = V_\cS \sqcup V_{U}$, where $V_{\cS} = \{a_1, a_2, \ldots, a_m\}$ has one vertex per set $S_i \in \cS$ and $V_U = V_U^1 \sqcup V_U^2 \sqcup \ldots \sqcup V_U^\ell$, where for each $t \in [\ell]$, $V_{U}^t = \{b_1^t, b_2^t, \ldots, b_n^t\}$ contains one vertex per element $e_j \in U$. For every $t \in [\ell]$, we add a directed arc $(a_i, b^t_j)$ to $A$ whenever $e_j \in S_i$ and set $\sigma_{(a_i, b^t_j)} = +1$. Finally, set the deadline to $k + 1$. First, we show the following.
\begin{claim} 
$Y_{k + 1}(\Gamma) = \ell \cdot \textsf{OPT}_{\textsf{MC}} + (k + 1) = \ell \left(\textsf{OPT}_{\textsf{MC}} + \frac{1}{k + 1}\right)$ 
\end{claim}
    
\begin{proof}[Proof of Claim]
\renewcommand{\qedsymbol}{$\Box$\ \emph{(Claim.)}}

We distinguish between two cases:
    
\textbf{Case 1:} Assume that $\textsf{OPT}_{\textsf{MC}} < n$. Clearly $Y_{k + 1}(\Gamma) \geq  \ell \cdot \textsf{OPT}_{\textsf{MC}} + (k + 1)$ since we can ignite $k$ vertices from $V_{\cS}$ corresponding to the optimum of the Maximum Coverage problem for the first $k$ rounds and burn some arbitrary vertex in the last round (note that the fire ignited in round $T = k + 1$ does not spread), resulting in exactly  $\ell \cdot \textsf{OPT}_{\textsf{MC}} + (k + 1)$  yellow vertices at the end of round $k + 1$. Now assume that $Y_{k + 1}(\Gamma) >  \ell \cdot \textsf{OPT}_{\textsf{MC}} + (k + 1)$. If all of the vertices burned by the optimum in the first $k$ rounds are from the set $V_{\cS}$, then by selecting the corresponding sets from $\cS$, we cover more than $\textsf{OPT}_{\textsf{MC}}$ elements from $U$, which is a contradiction (this is possible because we assumed $\textsf{OPT}_{\textsf{MC}} < n$). Consider a scenario in which an unburned $b_j^t$ is ignited. If the vertex has an in-degree of at least $1$, then burning any $a_i$ with an arc $(a_i, b^t_j)$ strictly increases the number of vertices burned yellow (if $b^t_j$ is unburned, no other $b^{t'}_j$ could be burned in $D$ either, and $\ell > 1$). If the in-degree of $b_j^t$ is $0$, burning any arbitrary $a_i$ strictly improves the number of yellows since we assumed $S \neq \varnothing$ for all $S \in \cS$, so $a_i$ has an out-degree of at least $\ell$ (note that this is possible since we assumed $k < m$). In any case, we get a contradiction.

\textbf{Case 2:} Next, let $\textsf{OPT}_{\textsf{MC}} = n$, and for the sake of contradiction, assume that $\textsf{OPT}_{\textsf{SGM}} > \ell \cdot n + k + 1$. Note that igniting the vertices corresponding to the sets in the optimum solution of the Maximum Coverage problem in the first $k$ rounds and igniting any arbitrary vertex from the set $V_{\cS}$ in round $k + 1$ will burn exactly the $\ell \cdot n + k + 1$ vertices yellow (i.e., the entire $V^t_U$ sets for the $t \in [\ell]$ and $k + 1$ vertices of $V_{\cS}$). If the optimum burns the entirety of $V_{U}^1, V_{U}^2, \ldots, V_{U}^\ell$, then the maximum number of vertices that could burn yellow is clearly $\ell \cdot n + k + 1$. This implies that in the first $k$ rounds, the optimum ignites a set of $k' \leq k$ vertices of $V_{\cS}$ that are distinct from the $k$ vertices corresponding to any optimum of the Maximum Coverage problem. Assume that these $k'$ sets cover at most $n' \le n - 1$ elements from $U$ in the Maximum Coverage problem. Therefore, the number of vertices burned yellow by the corresponding $V_{\cS}$ vertices is at most $k' + \ell \cdot n'$. By doing this, the optimum can ignite $k + 1 - k' \leq k + 1$ many vertices from outside $V_{\cS}$, making the total number of yellow vertices $k' + \ell \cdot n' + k + 1 \leq 2k + 1 + \ell \cdot (n - 1) = 2k + 1 + \ell \cdot n -  (k + 1)^2 < \ell \cdot n + k + 1$ for $k >= 1$. This is a contradiction to its optimality.
\end{proof}
    
Feige \cite{Feige98} showed that for any $\varepsilon > 0$, the Maximum Coverage problem cannot be approximated in polynomial time within a ratio of $(1 - 1/e + \varepsilon)$ unless $\textsf{P} = \textsf{NP}$. To show the hardness of approximation, we look further into Feige's hardness proof, where he provides a gap-producing reduction from the 3\textsf{SAT}-5\footnote{An instance of the the 3\textsf{SAT}-5 problem consists of a set of literals $x_1, x_2, \ldots, x_n$ and a set of $3$-\textsf{CNF} clauses $C_1, C_2, \ldots, C_m$ with the promise that no literal appears, positively or negatively, more than $5$ times in the clauses. The task is to decide whether a satisfying assignment of boolean values to literals exists.} problem to Maximum Coverage as follows: If the instance of the 3\textsf{SAT}-5 problem is satisfiable, then the instance of the Maximum Coverage problem has a solution that covers the entire set $U$. If the 3\textsf{SAT}-5 instance is not satisfiable, then any choice of $k$ sets from $\cS$ would cover at most a $1 - 1/e + \littleO{1}$ fraction of the elements of $U$. Following the chain of reduction, if the 3\textsf{SAT}-5 instance is satisfiable, we get an instance of the Directed Signed Graph Burning problem where $Y_{k + 1}(\Gamma) = \ell \cdot n + k + 1 = \ell \cdot \left(n + \frac{1}{k + 1}\right) = \ell \cdot (n + \littleO{1})$. Otherwise, $Y_{k + 1}(\Gamma) \leq \ell \left(\textsf{OPT}_{\textsf{MC}} + \frac{1}{k + 1}\right) = \ell \left((1 - 1/e + \littleO{1}) \cdot n + \frac{1}{k + 1}\right) = \ell (1 - 1/e + \littleO{1}) \cdot n$ (note that $\frac{1}{k + 1} = \littleO{1}$ since $k$ tends to infinity). Therefore, any algorithm that approximates $Y_{k + 1}(\Gamma)$ within a factor of $(1 - 1/e + \varepsilon)$ for any $\varepsilon > 0$ could decide whether the 3\textsf{SAT}-5 instance is satisfiable or not, which is not possible unless $\textsf{P} = \textsf{NP}$.
\end{proof}

\begin{theorem}\label{thm:USGBwD_hardness}
    Unless $\textsf{P} = \textsf{NP}$, there is no polynomial-time exact algorithm for computing $Y_k(\Gamma)$, given a undirected signed graph $\Gamma$ and integer $k$ as input.   
\end{theorem}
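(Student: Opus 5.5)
We reduce from an NP-hard covering problem on undirected graphs, mirroring the proof of Theorem~\ref{thm:DSGBwD_hardness}. Directed arcs are no longer available to stop back-propagation, so we use the deadline instead, together with the all-positive signature. We choose \textsc{Dominating Set} (equivalently, a unit-radius covering problem). Given a graph $H$ on $N$ vertices and an integer $k$, we build an undirected graph $G$ and set the deadline to $k+1$. The vertices ignited in rounds $1,\dots,k$ should be forced to be ``centres'' whose closed neighbourhoods get burned in round $k+1$. With the deadline $k+1$, a fire ignited at round $t$ spreads for exactly $k+1-t$ rounds, which is the usual source of difficulty in burning reductions. To neutralise the unequal radii, we subdivide and pad as follows.

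Take a vertex $a_x$ for every $x\in V(H)$. For every $y\in V(H)$, take $\ell$ copies $b^1_y,\dots,b^\ell_y$, where $\ell$ is polynomially large, for instance $\ell=(k+1)^2$ as before. Join $a_x$ to $b^t_y$ by a path whose length depends on the round. Because the rounds differ, it is cleaner to use the standard trick from burning hardness: we make every fire useful only through exactly one spreading step. To do so, we set $k$ rounds but attach to each $a_x$ pendant structures which ensure that an optimum ignites only $a$-vertices early.

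An alternative and cleaner route is to reduce directly from the known NP-hardness of the (unsigned) burning number on undirected graphs, e.g.\ on spiders or path forests (Bessy et al.~\cite{BessyBJRR17}). With the all-positive signature, the proof of Proposition~\ref{prop:burning_bound}(iii) shows that igniting everything yellow simulates standard burning. Moreover, any sequence achieving all $n$ vertices in one colour can be assumed monochromatic in its ignitions, since a differently coloured ignition would create a vertex of the other colour. Hence $Y_k(\Gamma_+)=n$ if and only if $b(G)\le k$, because the process with deadline $k$ burns everything exactly when a burning sequence of length at most $k$ exists. Thus an exact algorithm for $Y_k$ decides whether $b(G)\le k$, which is NP-complete. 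I would adopt this second route as the main proof, since it needs no gadget.

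The key steps, in order, are as follows.
\begin{enumerate}[{\upshape(1)}]
\item Take an instance $(G,k)$ of \textsc{Graph Burning} and output $\Gamma=\Gamma_+$ on $G$ with deadline $k$.
\item If $b(G)\le k$, the all-yellow optimal burning sequence, run via Proposition~\ref{prop:burning_bound}(iii), yields $n$ yellow vertices within $k$ rounds.
\item Conversely, suppose $Y_k(\Gamma_+)=n$. Then some sequence of length at most $k$ burns all vertices, and the proof of Proposition~\ref{prop:burning_bound}(ii) shows that its ignitions form a standard burning sequence finishing by time $k$.
\end{enumerate}

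The main subtlety lies in step (3) and the deadline semantics. Under the deadline rule the process stops after the ignition at $t=k$, so every vertex must already be burned by the end of round $k$. Consequently, $W_k\subseteq Z_k$ from Proposition~\ref{prop:burning_bound}(ii) gives $b(G)\le k$. I would also check that the burning-number convention, where the final ignition may occur at time $T$, matches this rule, and that NP-hardness holds for the restricted classes, since Bessy et al.\ prove it even for trees.
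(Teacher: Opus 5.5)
Your main route is correct and is essentially the paper's proof. It takes a \textsc{Graph Burning} instance $(G,k)$, signs every edge $+1$, keeps the deadline $k$, and uses that $Y_k(\Gamma)=n$ exactly when some ignition sequence of length at most $k$ burns all of $V(G)$; your steps (2) and (3) just spell out the simulation arguments of Proposition~\ref{prop:burning_bound}(ii)--(iii), which the paper treats as straightforward. The abandoned \textsc{Dominating Set} sketch in your first paragraph is incomplete and plays no role, so it should simply be deleted.
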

\begin{proof}
The result follows in a straightforward manner from the hardness  of the standard (unsigned) graph burning. In the decision version of the standard unsigned graph burning, we are given an undirected graph $G$ on $n$ vertices and an integer $k$. The task is to decide if a sequence of at most $k$ ignitions exist that can burn the entire vertex set by round $k$, which is an \textsf{NP}-complete problem \cite{BessyBJRR17}.

One can reduce any arbitrary instance of the standard unsigned graph burning problem given as graph $G$ to an instance of computing $Y_k(\Gamma)$ on an undirected signed graph $\Gamma = (G, \sigma)$ with a deadline by letting $\sigma(e) = +1$ for all $e \in E(G)$ and setting the deadline to $k$. Clearly, $Y_k(\Gamma) = n$ if and only if an ignition sequence of size at most $k$ in the original unsigned graph can burn the entire vertex set.
\end{proof}

\begin{theorem}\label{thm:DSGBwoD_hardness}
    Unless $\textsf{P} = \textsf{NP}$, there is no polynomial-time exact algorithm for computing $Y(\Gamma)$, given a directed signed graph $\Gamma$ as input.   
\end{theorem}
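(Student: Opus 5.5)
We reduce from the directed problem with deadlines; by Theorem~\ref{thm:DSGBwD_hardness} (or its exact version) computing $Y_k(\Gamma)$ for directed $\Gamma$ is hard. It suffices to remove the deadline. Since the graph is directed, a gadget can force the process to be worth little after round $k$. The cleanest route reduces from the Maximum Coverage instances used in Theorem~\ref{thm:DSGBwD_hardness}; exact computation of Maximum Coverage is \textsf{NP}-hard. Take the same bipartite digraph $D$ with $V_\cS$ and the $\ell$ copies $V_U^t$. To simulate the deadline, add a directed ``timer'' that punishes late ignitions. The timer is a directed path $p_1\to p_2\to\cdots\to p_{k+1}$, with positive arcs, together with a large set $W$ of $N$ vertices (with $N$ much larger than $\ell n+m$). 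These are attached so that $p_{k+1}$ has negative arcs to every $b_j^t$ and every $a_i$ that is still unburned when the timer fires. A cleaner variant: $p_{k+1}$ points to a hub $h$, and $h$ has negative arcs into every vertex of $V_\cS\cup V_U$. Then every vertex not yet burned when the wave arrives burns green. Also let $h$ have positive arcs into $W$, so that $W$ burns yellow, in the same colour as $p_1$.

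The key steps are as follows. (1) Show that any optimal sequence must ignite $p_1$ first in yellow. Otherwise $W$ (size $N$) is lost or delayed, or is burned green by a later ignition. This loses more than everything else is worth, because $W$ and the timer have in-degree only from the chain. By the symmetry of Lemma~\ref{lem:symmetry} we may fix the majority colour to be yellow. (2) Once $p_1$ burns at time $1$, the hub fires at a fixed time $k+2$ or so. Only the ignitions at rounds $2,\dots,k+1$ can act on $V_\cS$ before the negative wave. Vertices of $V_U$ not reached by yellow fire by then turn green. Vertices of $V_\cS$ are sources (in-degree only from $h$), so they burn green unless ignited. (3) After the hub fires everything is burned, so the process terminates naturally and no deadline is needed. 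Conclude that $Y(\Gamma)=N+(k+2)+\ell\cdot\textsf{OPT}_{\textsf{MC}}+k$, up to the exact timing constants. Thus computing $Y$ exactly determines $\textsf{OPT}_{\textsf{MC}}$, as in the Claim of Theorem~\ref{thm:DSGBwD_hardness}. Using $\ell$ copies makes the $V_U$ contribution dominate any wasted ignitions, exactly as in Case~1 there.

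The main obstacle is the timing bookkeeping and the exchange arguments in step (2). We must check that igniting a $b_j^t$ or a timer vertex instead of some $a_i$ never helps. We must also check that spreading from $h$ and from $V_\cS$ into $V_U$ at the same step cannot create ties favourable to the player. Simultaneous yellow and green influence leaves a vertex unburned, which then allows a later yellow ignition. To avoid this, I would make $h$ reach $V_U$ through one extra step, via an intermediate layer. That layer fires strictly after the yellow wave from $V_\cS$ has had its one step. I would also count the few possible later ``free'' ignitions of held vertices (at most a constant number, bounded as in Case~2). These are absorbed by choosing $\ell=(k+1)^2$ and $N$ large.
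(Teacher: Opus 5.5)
Your step (1) fails, and the whole reduction fails with it. Without a deadline, $Y(\Gamma)$ attaches no cost to time, so ``delaying'' $W$ loses nothing. $W$ receives arcs only from $h$, and $h$ only from the chain, so $W$ burns yellow whenever $p_1$ is eventually ignited yellow, no matter how late. Nothing in your construction forces the timer to start at round $1$. Concretely, the player can first ignite $a_1,\ldots,a_m$ in yellow (apart from $h$ they are sources), which burns every $b^t_j$ yellow; any uncovered ones can be ignited yellow individually. Only then does the player ignite $p_1$ yellow. The chain, $h$ and $W$ all burn yellow, and the negative arcs out of $h$ reach only vertices that are already burned. Hence $Y(\Gamma)=|V(\Gamma)|$ for every instance, and the value encodes nothing about $\textsf{OPT}_{\textsf{MC}}$.

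Your $W$ is not useless. It deters recolouring the chain or $h$: igniting $h$ green just before the wave arrives would otherwise turn every unburned vertex of $V_{\cS}\cup V_U$ yellow through the negative arcs. But it does not do the job step (1) assigns to it. The same defusal issue hits your intermediate layer if it is a single vertex. Igniting it green one round before the wave reaches it turns all remaining $V_U$ vertices yellow at the cost of one vertex, so it needs the same protection.

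The missing idea is a gadget that punishes a wrong \emph{first} move immediately and irrevocably. The paper uses a bait. This is a large set $B$ into which every vertex other than the timer root $r$ sends arcs, half positive and half negative, while $r$ sends positive arcs to all of $B$. At the spread step of time $2$ the first ignited vertex is the only burned vertex, so it alone fixes the colours of all of $B$. Igniting $r$ yellow makes all of $B$ yellow. Any other first ignition, or $r$ in green, makes about half of $B$ green, and since $B$ is then fully burned, no later ignition can repair this. Taking $|B|/2$ larger than the rest of the graph forces $r$ yellow at time $1$, and only then does the timer genuinely simulate a deadline.

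The paper also differs from your outline in two further ways. First, it uses one timer path per vertex of $H$, each with a negative last arc into that vertex, instead of a single hub, so no single vertex is a cheap defusal point. Second, it reduces from Set Cover with a threshold, and lets the vertices tied in the last round be ignited yellow for free afterwards, rather than eliminating ties. Once a bait is added, your timing and exchange arguments in steps (2)--(3) could plausibly be completed. As written, however, the reduction is unsound.
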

\begin{proof}
We reduce from the Set Cover problem. In an instance of the Set Cover problem, we are given a set $U$ of $n$ elements, a family $\cS$ of $m$ subsets of $U$, and a parameter $k$. The task is to decide whether there exists a set $\cS'$ of exactly\footnote{The standard Set Cover definition only requires $\cS'$ to be of size at most $k$, but it is well understood that it can be reduced to the case with cover size of exactly $k$ by adding new elements and singleton sets containing those elements.} $k$ sets in $\cS$ that collectively cover $U$, i.e., $\bigcup_{s \in \cS'} s = U$. Such an $\cS'$ is said to ``cover'' $U$. Without loss of generality, we can assume that no set in $\cS$ contains another set. Furthermore, we can assume that every set in $\cS$ has a cardinality of at least $2$ since all the empty sets could be removed and singleton sets must be added to the solution. We can also assume that $U$ has no isolated (not in any $S \in \cS$) elements.

We generate a directed signed graph $\Gamma$ with three main parts. The first is a bipartite digraph $H = (V, E)$ representing the set system with $V = V_U \sqcup V_{\cS}$, where $V_U$ contains one vertex per element in $U$ and $V_{\cS}$ contains one vertex per set in $\cS$. The edges are directed from $V_{\cS}$ to $V_U$. There is a positive edge from $s \in V_\cS$ to $u \in V_U$ if and only if $u \in s$. We let $v_1, v_2, \ldots, v_N$ collectively denote the nodes in $H$ with $N = m + n$. We add two gadgets to $H$ in this reduction to complete $\Gamma$. These gadgets are depicted in Figure \ref{fig:timer}. 
    
\emph{i) The timer gadget $T$:} It is depicted on the left side in Figure \ref{fig:timer}, consisting of a root node $r$ and directed paths $p_1, p_2, \ldots, p_N$, each of length $k - 1$ (i.e., $k$ vertices in each). For each $i \in [N]$, let $s_i$ and $t_i$ denote the first and last vertices of $p_i$, respectively. $r$ is connected to each $s_i$ via a positive edge and every edge of the path is also a positive edge. Each $t_i$ is connected to $v_i$ via a negative edge. Let $v_u \in V_U$ correspond to an element $u \in U$. We say a vertex $v_{u}$ is \emph{marked for coverage} in a given round if, in the previous round, a vertex $v_S \in V_{\cS}$ was ignited yellow, where $S \in \cS$ contains $u$. The function of the timer gadget is to ensure that every vertex of $V_U$ that has not been marked for coverage by round $k + 2$ turns green at the end of it. As a result, the timer gadget simulates a deadline for any burning sequence to burn vertices yellow by round $k + 2$. Note that the vertices in $V_U$ t hat are marked for coverage at time $k + 2$ remain unburned because of a tie in the influence they receive; a yellow influence from the $V_S$ that has marked them for coverage and a green influence from a $t_i$. The optimal burning sequence will then proceed to burn them yellow one by one, so they will eventually contribute to $Y(\Gamma)$.
    
\emph{ii) The bait gadget $B$:} It is shown on the right side in Figure \ref{fig:timer}, consisting of a large number $M$ of vertices (consider $M \geq N^3$). Every vertex $w \in (V(T) - \{r\}) \cup V(H) \cup V(B)$ is connected to $M/2$ arbitrarily chosen vertices of $B$ via positive edges and to the other $M/2$ in $B$ via negative edges. The root node $r$ is connected to all bait vertices via positive edges. The function of the bait is to force any optimal burning sequence to ignite $r$ yellow in the first round and set off a timer. In the following, we show that any other strategy would yield a much smaller objective value.

First, note that if $r$ is ignited yellow in the first round, one has all $M$  vertices of $B$ burned yellow by the second round. However, any strategy that ignites a vertex other than $r$ in the first round, regardless of its colour, will burn around $M/2$ vertices of $B$ green in the second round. When $M$ is large enough to satisfy 
$$M \geq  M/2 + \abs{T} + N \Rightarrow M/2 \geq N (k + 1) + 1,$$
the optimal strategy will burn $r$ yellow in the first round (a loss of $M/2$ in objective value cannot be compensated by future ignitions). $M \geq N^3$ certainly satisfies this condition.
    
Next, we show that if we can compute $Y_k(\Gamma)$ exactly in polynomial time, then we can also decide any given instance of the Set Cover problem in polynomial time.

\begin{claim}
$Y_k(\Gamma) \geq M + N \cdot k + n + k + 1$ if and only if the set cover instance is a $\textsf{Yes}$ case, that is, there exists a subset $\cS'$ of size $k$ for which $\bigcup_{s \in \cS'} s = U$.
\end{claim}

\begin{proof}[Proof of Claim]
\renewcommand{\qedsymbol}{$\Box$\ \emph{(Claim.)}}
We distinguish between the $\textsf{Yes}$ and $\textsf{No}$ instances of the Set Cover problem:
\begin{itemize}
\item \textbf{$\textsf{Yes}$ Case.} Assume there exists $\cS' \subset \cS$ of size $k$ for which $\bigcup_{s \in \cS'} s = U$. Consider the strategy where we ignite $r$ yellow in the first round and ignite $\cS$ vertices from $V(H)$ that correspond to the sets in $\cS'$ yellow in rounds $2, 3, \ldots, k + 1$. In round $k + 2$, a green fire burns every remaining vertex from $V_\cS$ as well as those $V_U$ vertices that have not been marked for coverage so far. The number of yellow vertices at the end of this strategy is at least $1 + N \cdot k$ (from $T$) plus $M$ (from $B$) plus $k$ (from $V_\cS$-vertices in $H$ ignited yellow) plus $n$ (all of the $V_U$) since the sets in $\cS'$ will cover the entirety of $U$, for a total of $M + N \cdot k + n + k + 1$.

\item \textbf{$\textsf{No}$ Case.} Assume that no $\cS' \subset \cS$ of size $k$ can cover the set  $U$. Consider an optimal sequence that burns the maximum number of nodes yellow. If multiple such optimal sequences exist, consider one with the least number of vertices ignited  yellow from $V_U$. This sequence ignites $r$ yellow in the first round, as discussed earlier. We argue that in this sequence, at most $k$ vertices from $V_{\cS}$ get to spread their fire before the process ends. To see why, observe that the paths of length $k - 1$ will spread the fire to all vertices of $H$ (except those marked for coverage in previous rounds) by round $k + 2$. Therefore, even if all the nodes after $r$ in the sequence are chosen from $V_{\cS}$, at most $k$ of them get to pass the fire along to $V_U$. Note that no $V_U$ vertex is ignited yellow. The reason is that if a vertex $u \in V_U$ is ignited yellow in the optimum sequence, by optimality we have no vertex $v \in V_{\cS}$ with an edge to $u$ (such a $v$ exists because we assumed no isolated elements) ignited yellow so far. Replacing $u$ in the sequence with $v$ does not decrease the number of yellow vertices, but it decreases the number of $V_U$ vertices in the sequence ignited yellow, which is a contradiction to the choice of the optimal sequence. Next, note that since we have a $\textsf{No}$ instance of the Set Cover problem, the set of $V_{\cS}$ vertices ignited yellow can burn at most $n - 1$ vertices from $V_U$ yellow. Even assuming that all the path vertices are burned yellow, at most $M + N \cdot k + n + k$ vertices ($M$ from $B$, $N \cdot k + 1$ from paths and $r$, $n - 1$ from $V_U$, and $k$ from $V_\cS$) are burned yellow. Note that in this analysis, we do take the vertices of $V_U$ that are marked for coverage in the last round into account by assuming that the optimal sequence burns them all yellow one by one after round $k + 2$. Altogether, it would be at least one less than what is achievable in the $\textsf{Yes}$ case. \qedhere
\end{itemize}
\end{proof}
The gap of at least $1$ in the objective value of the $\textsf{Yes}$ and $\textsf{No}$ cases means that if we can exactly compute $Y_k(\Gamma)$ for any given $\Gamma$ and $k$ in polynomial time, we could also correctly decide the Set Cover problem in polynomial time, which is not possible unless $\textsf{P} = \textsf{NP}$.
\end{proof}
\begin{figure}[t]
    \centering
    \resizebox{\textwidth}{!}{
\begin{tikzpicture}[
  >={Stealth[length=2.6mm]},
  every node/.style={font=\small},
  vtx/.style={circle,draw,fill=white,minimum size=6.5mm,inner sep=0pt},
  tvtx/.style={circle,draw,fill=yellow!14,minimum size=5.5mm,inner sep=0pt,font=\scriptsize},
  svtx/.style={circle,draw,fill=cyan!12,minimum size=5.5mm,inner sep=0pt,font=\scriptsize},
  src/.style={circle,draw,fill=black!10,minimum size=9mm,inner sep=0pt,font=\normalsize},
  bait/.style={circle,draw,fill=black!5,minimum size=6mm,inner sep=0pt,font=\scriptsize},
  pint/.style={circle,draw,fill=white,minimum size=4.4mm,inner sep=0pt,font=\scriptsize},
  pedge/.style={->,green!55!black,thick},
  nedge/.style={->,red!75!black,thick,dashed},
  pedgeu/.style={-,green!55!black,thick},
  nedgeu/.style={-,red!75!black,thick,dashed},
  fanp/.style={->,green!55!black,semithick,opacity=0.85},
  fann/.style={->,red!75!black,semithick,dashed,opacity=0.85},
  pedgelab/.style={green!45!black,font=\scriptsize},
  nedgelab/.style={red!70!black,font=\scriptsize},
  boxlab/.style={font=\footnotesize\bfseries},
]

% =================== TIMER GADGET (left) ===================
\node[src] (r) at (0,0) {$r$};

% path p_1
\node[svtx] (s1) at (1.7, 1.6) {$s_1$};
\node[pint] (m1) at (2.8, 1.6) {};
\node[tvtx] (t1) at (3.9, 1.6) {$t_1$};
\draw[pedge] (r) -- (s1); \draw[pedge] (s1) -- (m1);
\draw[pedge,dotted,line width=0.9pt] (m1) -- (t1);
\node[font=\scriptsize] at (2.8,2.05) {$p_1$};

% path p_2
\node[svtx] (s2) at (1.7, 0.0) {$s_2$};
\node[pint] (m2) at (2.8, 0.0) {};
\node[tvtx] (t2) at (3.9, 0.0) {$t_2$};
\draw[pedge] (r) -- (s2); \draw[pedge] (s2) -- (m2);
\draw[pedge,dotted,line width=0.9pt] (m2) -- (t2);
\node[font=\scriptsize] at (2.8,0.45) {$p_2$};

\node at (2.8,-1.0) {$\vdots$};

% path p_n
\node[svtx] (sn) at (1.7, -2.0) {$s_N$};
\node[pint] (mn) at (2.8, -2.0) {};
\node[tvtx] (tn) at (3.9, -2.0) {$t_N$};
\draw[pedge] (r) -- (sn); \draw[pedge] (sn) -- (mn);
\draw[pedge,dotted,line width=0.9pt] (mn) -- (tn);
\node[font=\scriptsize] at (2.8,-1.55) {$p_N$};

\begin{scope}[on background layer]
  \node[draw,rounded corners=8pt,fill=cyan!3,thick,
        fit=(r)(s1)(t1)(tn),inner sep=12pt] (timerbox) {};
\end{scope}
\node[boxlab,align=center] at ($(timerbox.north)+(0,0.45)$)
  {timer gadget $T$\\[-1pt]{\scriptsize\mdseries $r$ and paths $p_i$ (length $k - 1$)}};

% =================== GRAPH H (center) ===================
\def\vx{6.6}
\node[vtx] (v1) at (\vx, 1.6)  {$v_1$};
\node[vtx] (v2) at (\vx, 0.0)  {$v_2$};
\node at (\vx,-1.0) {$\vdots$};
\node[vtx] (vn) at (\vx, -2.0) {$v_N$};

\draw[-,black!55] (v1) -- (v2);
\draw[-,black!55] (v2) to[bend right=25] (vn);

\begin{scope}[on background layer]
  \node[draw,rounded corners=8pt,fill=black!2,thick,
        fit=(v1)(v2)(vn),inner sep=14pt] (gbox) {};
\end{scope}
\node[boxlab,align=center] at ($(gbox.north)+(0,0.45)$)
  {graph $H$\\[-1pt]{\scriptsize\mdseries vertex set $V=\{v_1,\dots,v_N\}$}};

% =================== BAIT GADGET (right) ===================
\def\px{10.6}
\node[bait] (b1) at (\px, 2.0) {$b_1$};
\node[bait] (b2) at (\px+1.0, 1.0) {$b_2$};
\node[bait] (b3) at (\px+0.9, -0.4) {$b_3$};
\node[bait] (b4) at (\px-0.1, -1.2) {};
\node[bait] (b5) at (\px-1.0, -0.2) {};
\node[bait] (b6) at (\px-0.6, 1.2) {};

\foreach \a/\b/\s in {%
  b1/b2/p, b2/b3/n, b3/b4/p, b4/b5/n, b5/b6/p, b6/b1/n,
  b1/b3/n, b1/b4/p, b2/b5/p, b2/b4/n, b3/b6/p, b5/b1/n, b6/b3/n}{
  \if p\s
    \draw[pedge,opacity=0.45] (\a) -- (\b);
  \else
    \draw[nedge,opacity=0.45] (\a) -- (\b);
  \fi
}

\begin{scope}[on background layer]
  \node[draw,rounded corners=8pt,fill=blue!3,thick,
        fit=(b1)(b2)(b3)(b4)(b5)(b6),inner sep=13pt] (baitbox) {};
\end{scope}
\node[boxlab,align=center] at ($(baitbox.north)+(0,0.72)$)
  {bait gadget $B$\\[-1pt]{\scriptsize\mdseries clique on $M$ vtcs; each $b_i$: deg $M-1$, half $+$/half $-$}};

% =================== INTER-BOX EDGES (representative only) ===================
% r -> B positive (to all): one representative high arc
\draw[pedge] (r) to[out=80,in=150] ($(baitbox.north west)+(0.3,0.0)$)
      to[out=-30,in=140] (b6);
\node[pedgelab] at (2.7,3.7) {$+$: $r\to B$ (all of $B$)};

% t_i -> v_i : negative last edge (representative on p_1 and p_n)
\draw[nedge] (t1) -- (v1);
\draw[nedge] (t2) -- (v2);
\draw[nedge] (tn) -- (vn);
\node[nedgelab] at (5.15,1.95) {$-$};

% path vertices -> B : half +/half - (a few representatives)
\draw[fanp] (t1) to[out=30,in=160] (b6);
\draw[fann] (t2) to[out=25,in=175] (b5);
\draw[fann] (mn) to[out=-25,in=210] (b3);
\draw[fanp] (mn) to[out=-35,in=210] (b4);
\node[align=center,font=\scriptsize] at (3.0,-3.05)
  {every path vertex $\to B$\\[-2pt](half $+$, half $-$)};

% v_i -> B : half +/half - (a few representatives)
\draw[pedge] (v1) to[out=20,in=150] (b1);
\draw[nedge] (vn) to[out=-15,in=225] (b4);
\node[align=center,font=\scriptsize] at (8.7,-2.9)
  {each $v_i\to B$\\[-2pt](half $+$, half $-$)};

% =================== LEGEND ===================
\begin{scope}[shift={(0.4,-4.4)}]
  \draw[pedge] (0,0) -- (0.9,0);
  \node[right=1pt,font=\scriptsize] at (0.9,0) {positive ($+$)};
  \draw[nedge] (3.4,0) -- (4.3,0);
  \node[right=1pt,font=\scriptsize] at (4.3,0) {negative ($-$)};
  \draw[-,black!55] (6.9,0) -- (7.8,0);
  \node[right=1pt,font=\scriptsize] at (7.8,0) {edge of $H$};
\end{scope}
\end{tikzpicture}    
}
\caption{The gadget allows us to simulate a deadline of $k$ in directed settings.}
\label{fig:timer}
\end{figure}

\begin{remark}
    The reduction to the signed graph $\Gamma$ in the proof of Theorem~\ref{thm:DSGBwoD_hardness} serves as a general template of gadget reductions that allow us to extend the hardness results from settings with a deadline to those without in directed or undirected graphs. We use this template in the proof of the following theorem.
\end{remark}

\begin{theorem}\label{thm1:USGBwoD_hardness}
Given an undirected signed graph $\Gamma$ and an integer $\tau$, it is $\textsf{NP}$-complete to
decide whether $Y(\Gamma)\ge\tau$. In particular, unless $\textsf{P}=\textsf{NP}$, there is no
polynomial-time exact algorithm for computing $Y(\Gamma)$, given an undirected signed graph
$\Gamma$ as input.
\end{theorem}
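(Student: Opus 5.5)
Membership in \textsf{NP} is routine. A certificate is a burning sequence $\mathcal{I}$, given as at most $n$ vertices with colours. By Remark~\ref{rem:arithmetic}, each spread step amounts to computing the sums $h_t(v)$ and their signs, so the whole process can be simulated in polynomial time. We then count $|\mathcal{Y}_{\mathcal{I}}|$ and compare it with $\tau$. Since $Y(\Gamma)$ is a maximum over $\mathcal{B}(\Gamma)$ by Lemma~\ref{lem:symmetry}, the answer is yes exactly when such a certificate exists. The second sentence of the theorem follows from the first, since an exact algorithm for $Y(\Gamma)$ decides $Y(\Gamma)\ge\tau$.

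For hardness I would follow the template of Theorem~\ref{thm:DSGBwoD_hardness}. The aim is to remove the deadline from the undirected problem of Theorem~\ref{thm:USGBwD_hardness}, that is, from standard graph burning with $k$ rounds~\cite{BessyBJRR17}. Given an instance $(G,k)$, I would build $\Gamma$ from three parts. The first is a copy of $G$ with all edges positive. The second is a timer: a root $r$ and, for each $v_i\in V(G)$, a positive path from $r$ whose last vertex $t_i$ is joined to $v_i$ by a negative edge. The path length is tuned so that green influence arrives at every $v_i$ exactly in round $k+2$. The third is a bait clique $B$ of size $M\ge N^3$, joined positively to $r$ and half-positively, half-negatively to every other vertex. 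The bait argument carries over verbatim: any first ignition other than $r^y$ sends about $M/2$ bait vertices green, a loss that no later play can recover. So every optimal sequence opens with $r^y$, and the timer starts.

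The claim to prove is $Y(\Gamma)\ge \tau:=M+|T|+|V(G)|$ if and only if $G$ can be burned in $k$ rounds. The threshold counts all of $B$, all timer vertices and all of $V(G)$ as yellow. For the forward direction, ignite the optimal burning sequence of $G$ in yellow during rounds $2,\dots,k+1$. Every vertex of $G$ is then yellow before the negative timer edges deliver green. For the converse, any vertex of $G$ not yellow by round $k+1$ meets green from $t_i$. It ends up green or tied, and a tied vertex can only be rescued by an ignition. An exchange argument, as in the \textsf{No} case of Theorem~\ref{thm:DSGBwoD_hardness}, then bounds the yellow count below $\tau$. The step ``ignite it yellow after a tie'' needs care: I would add a pendant gadget, or scale the timer paths so that the count of late ignitions needed exceeds the slack.

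The main obstacle is undirectedness. Fire now also travels from $G$ back into the timer paths and into the bait, and from the bait into everything. For example, a yellow $v_i$ burned early pushes green into $t_i$ through the negative edge, and this might disrupt the timer. I would first try to make the timer robust. Taking the paths through the $t_i$ to be burned from $r$ strictly before $G$ can reach them, by choosing lengths so that the wave from $r$ arrives at $t_i$ in round $k+1$, makes backward influence harmless. In addition, each bait vertex is made adjacent to all of $V(\Gamma)$ with balanced signs, and $B$ burns yellow already in round $2$. Each later vertex then receives zero net influence from $B$, so the bait is neutral after round $2$. Verifying these timing and tie claims round by round is where I expect most of the work.
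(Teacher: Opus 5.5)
Your \textsf{NP}-membership argument and the overall template (bait forcing $r^y$ first, timer paths with a negative last edge, reduction from Graph Burning) match the paper. The hardness part, however, has a genuine gap, and it sits exactly at the backward-influence issue you flagged.

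\textbf{The threshold fails in the \textsf{Yes} case.} Your threshold $\tau=M+|T|+|V(G)|$ equals $|V(\Gamma)|$, so it requires every timer vertex to end yellow. The sequence you propose ignites $y_1\in V(G)$ yellow at round $2$. At round $3$, the only burned non-bait neighbour of $t_{y_1}$ is $y_1$, across a negative edge, because the wave from $r$ reaches the predecessor of $t_{y_1}$ only at round $k$. Hence $t_{y_1}$ burns green as soon as $k\ge 3$, and $\tau$ is missed in a \textsf{Yes} instance.

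\textbf{Your proposed cure cannot work.} Letting the $r$-wave reach $t_i$ before $G$ does contradicts the timer's purpose. The vertex $t_i$ must not burn before about round $k+1$, or green would hit $v_i$ before the deadline, whereas vertices of $G$ are ignited from round $2$ on.

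\textbf{A single timer path does not enforce the deadline.} One path contributes only $-1$ to $h_{k+2}(v_i)$. A vertex still unburned after round $k+1$ is tied by one yellow $G$-neighbour burned in round $k+1$, and can then be ignited yellow later at no cost. With two such neighbours it simply burns yellow. So the converse direction fails as well.

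The paper resolves both problems with two ideas missing from your plan. Its indexing differs from yours by one: $k=m+1$, ignitions in $G$ at times $2,\dots,k$, and green arriving at time $k+1$.

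\emph{First, the timer is not counted.} The threshold $\tau=M+1+n(L+1)$ counts only $r$, $B$, $V(G)$ and the pendants. The $P=nw(k-1)$ internal timer vertices are slack, so $|V(\Gamma)|=\tau+P$ and it no longer matters that some of them turn green (the \textsf{Yes} case, Lemma~\ref{lem:yes_case}, never needs them yellow). To make a missed deadline cost more than this slack, every $v\in V(G)$ receives $L=P+k$ positive pendants.

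\emph{Second, every $v$ receives $w=\Delta+2k-1$ parallel timer paths.} Suppose $v$ is unburned at the end of time $k$, and the at most $k-1$ ignitions at times $2,\dots,k$ touch $a$ of its timer paths and $b$ of its pendants. Then $h_{k+1}(v)\le\Delta+b+a-(w-a)\le-1$, so $v$ burns strictly green and there is no tie to exploit. At least $L-k$ of its pendants then follow it to green. The loss is $P+1$, which exceeds the entire slack and is what drives Lemma~\ref{lem:no_case}.
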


The rest of this section is devoted to the proof. We reduce from the Graph Burning problem. An
instance of that problem is a graph $G$ on $n$ vertices together with a positive integer $m$, and
the question is whether $b(G)\le m$. This problem is $\textsf{NP}$-complete \cite{BessyBJRR17}.
Since $b(G)\le n$ holds for every graph, every instance with $m\ge n$ is a $\textsf{Yes}$ instance,
and we map all of those to one fixed $\textsf{Yes}$ instance of our problem. So we may assume
$1\le m<n$. Let $\Delta$ be the maximum degree of $G$ and put
\[
  k=m+1,\qquad w=\Delta+2k-1,\qquad P=nw(k-1),\qquad L=P+k,\qquad M=2P+2 .
\]
Throughout we use $Y(\Gamma)=\max_{\mathcal{I}\in\mathcal{B}(\Gamma)}
\abs{\mathcal{Y}_{\mathcal{I}}}$, as adopted after Lemma~\ref{lem:symmetry}, and for
$\mathcal{I}\in\mathcal{B}(\Gamma)$ and a vertex $v$ we write $\beta(v)=\min\{t:
\nu_{\mathcal{I},t}(v)\neq 0\}$, as in Section~\ref{sec:paths}. A burning sequence of $\Gamma$ is
written $\mathcal{I}=(z_1^{c_1},\ldots,z_q^{c_q})$, so that $z_s$ is the vertex ignited at time $s$.

The signed graph $\Gamma=\Gamma(G,m)$ is constructed as follows. Start with $G$ and let every edge
of $G$ be positive. For every $v\in V(G)$ add $L$ new vertices and join each of them to $v$ by a
positive edge. These $L$ vertices form the set $Q(v)$ of \emph{pendants} of $v$. Add a new vertex
$r$. For every $v\in V(G)$ and every $i\in[w]$ add a path
$r\,p^{v,i}_1p^{v,i}_2\cdots p^{v,i}_{k-1}\,v$ on $k-1$ new internal vertices, all of whose edges
are positive except $p^{v,i}_{k-1}v$, which is negative. These $w$ paths are the \emph{timer paths}
of $v$. Note that $k\ge 2$, so every timer path has at least one internal vertex and no vertex of $G$
is adjacent to $r$. Let $W$ be the set of all vertices constructed so far other than $r$, so that
$\abs{W}=N:=n(L+1)+P$, and list $W$ as $w_1,\ldots,w_N$. Finally add an independent set
$B=\{b_1,\ldots,b_M\}$ of new vertices, join $r$ to every $b_j$ by a positive edge, and join $w_i$
to $b_j$ by an edge of sign $(-1)^{i+j}$ for all $i\in[N]$ and $j\in[M]$. We call $B$ the
\emph{bait}. Put
\[
  \tau=M+1+n(L+1),
\]
so that $\abs{V(\Gamma)}=\tau+P$. Since $k\le n$ and $w<3n$, the signed graph $\Gamma$ has
$O(n^4)$ vertices and $O(n^7)$ edges, and it is constructed from $(G,m)$ in polynomial time.

Two properties of the bait will be used repeatedly. Since $M$ is even,
\begin{equation}\label{eqn:bait_rows}
  \text{every } x\in W \text{ has exactly } M/2=P+1 \text{ positive and } M/2 \text{ negative
  neighbours in } B .
\end{equation}
Since $N\ge L+1+P=2P+k+1\ge 2P+3$, each of the two parity classes of indices in $[N]$ has at least
$\lfloor N/2\rfloor\ge P+1$ members, so
\begin{equation}\label{eqn:bait_columns}
  \text{every } b\in B \text{ has at least } P+1 \text{ positive and at least } P+1 \text{ negative
  neighbours in } W .
\end{equation}
We write $\Gamma'=\Gamma-B$ and let $d'$ denote the distance in $\Gamma'$.

We first record a standard reformulation of the burning number.

\begin{lemma}\label{lem:burning_cover}
Let $G$ be a graph and let $m\ge1$. Then $b(G)\le m$ if and only if there are vertices
$y_1,\ldots,y_m$ of $G$ with $V(G)=\bigcup_{i=1}^{m}N^{m-i}_G[y_i]$, where
$N^{\rho}_G[y]=\{u\in V(G):d_G(y,u)\le\rho\}$.
\end{lemma}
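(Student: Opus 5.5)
We prove both implications by tracking, for each vertex, the time at which it becomes burned in standard burning. The guiding observation is that a fire ignited at time $i$ has spread to exactly the vertices within distance $t-i$ of its source by the end of time $t$, as long as nothing else interferes. The lemma is the standard ``covering by balls of radii $m-1,\dots,0$'' reformulation, and the only care needed concerns the convention that spreading precedes ignition and the case where a chosen vertex is already burned.

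For the forward direction, suppose $b(G)\le m$. Fix a standard burning sequence $(x_1,\ldots,x_k)$ with $k\le m$ that burns all of $G$ by time $T\le m$. An induction on $t$ shows that a vertex $u$ is burned by the end of time $t$ if and only if $d_G(x_i,u)\le t-i$ for some $i\le\min\{t,k\}$. Spreading extends every burned set by one step of distance, and the ignition at time $t$ adds $x_t$ at radius $0$. At $t=T\le m$ this gives $V(G)=\bigcup_{i\le k}N^{T-i}_G[x_i]\subseteq\bigcup_{i\le k}N^{m-i}_G[x_i]$. If $k<m$, we pad with arbitrary vertices $y_{k+1},\dots,y_m$ and set $y_i=x_i$ for $i\le k$; extra balls only enlarge the union, so the cover follows.

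For the converse, take $y_1,\ldots,y_m$ whose balls cover $V(G)$. We run standard burning that at time $i$ ignites $y_i$ if it is unburned, and otherwise ignites an arbitrary unburned vertex (or nothing, if everything is already burned). The claim, proved by induction on $t$, is that every vertex within distance $t-i$ of $y_i$, for $i\le t$, is burned by the end of time $t$.
\begin{itemize}
\item If $y_i$ was already burned at time $i$, it is burned at radius $0$ anyway.
\item If a vertex $u$ with $d_G(y_i,u)=t-i\ge1$ is unburned before the spread at time $t$, then its neighbour on a shortest path to $y_i$ lies at distance $t-1-i$ from $y_i$. That neighbour is burned by time $t-1$ by the induction hypothesis, so $u$ burns during the spread at time $t$.
\end{itemize}
At $t=m$ the cover gives that every vertex is burned, so the process terminates by time $m$ and $b(G)\le m$.

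The main (mild) obstacle is bookkeeping the off-by-one: a fire ignited at time $i$ has radius $t-i$ at the end of time $t$, not $t-i+1$, because spreading precedes ignition. This is exactly what yields the radii $m-i$.
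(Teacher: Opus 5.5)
Your proposal is correct and follows the same route as the paper, which dismisses the lemma with ``By definition of graph burning''. Your induction (the burned set at the end of time $t$ is $\bigcup_{i\le\min\{t,k\}}N^{t-i}_G[x_i]$, with padding by arbitrary vertices and substitute ignitions when $y_i$ is already burned) is exactly the routine unpacking of that definition, and it handles the spread-before-ignition off-by-one correctly.
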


\begin{proof}
By definition of graph burning.
\end{proof}

The next lemma shows that the bait forces the first ignition.

\begin{lemma}\label{lem:bait}
Let $\mathcal{I}\in\mathcal{B}(\Gamma)$ and suppose that $\mathcal{I}$ does not ignite $r$ in yellow
at time $1$. Then $\abs{\mathcal{Y}_{\mathcal{I}}}\le\tau-1$.
\end{lemma}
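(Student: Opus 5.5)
The plan is to count green vertices. Since $\abs{V(\Gamma)}=\tau+P$, it suffices to show that at least $P+1$ vertices end up burned green. Once burned, a vertex never changes colour, so it is enough to find $P+1$ vertices that burn green at the spread step of time $2$. At the end of time $1$ the only burned vertex is $z_1$, with colour $c_1$. So at the spread step of time $2$, every neighbour $x$ of $z_1$ has exactly one burned neighbour. Its influence is $h_2(x)=\sigma(xz_1)\nu_1(z_1)\neq 0$, and it burns with colour $\sigma(xz_1)c_1$, identifying colours with $\pm1$ as in Remark~\ref{rem:arithmetic}. I will split into three cases according to where $z_1$ lies. Since $\mathcal{I}$ does not ignite $r$ yellow at time $1$, these cases are exhaustive.

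\emph{Case $z_1=r$ with $c_1=g$.} The neighbours of $r$ include all of $B$, and every edge $rb_j$ is positive. So all $M=2P+2$ bait vertices burn green at time $2$, which gives well over $P+1$ green vertices.

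\emph{Case $z_1\in W$.} Here $z_1$ is adjacent to every vertex of $B$. By \eqref{eqn:bait_rows}, exactly $M/2=P+1$ of these edges have sign $-c_1$. The corresponding bait vertices burn green at time $2$.

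\emph{Case $z_1\in B$.} Here $z_1$ is adjacent to $r$ and to every vertex of $W$. By \eqref{eqn:bait_columns}, at least $P+1$ vertices of $W$ are joined to $z_1$ by an edge of sign $-c_1$, whichever colour $c_1$ is. These vertices burn green at time $2$.

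In every case at least $P+1$ vertices are burned green. Hence $\abs{\mathcal{Y}_{\mathcal{I}}}\le \tau+P-(P+1)=\tau-1$.

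I expect no real obstacle. The only points to check are that the bait counts \eqref{eqn:bait_rows} and \eqref{eqn:bait_columns} give at least $P+1$ regardless of the colour $c_1$, and that $B$ being independent does not matter: in the last case we only count vertices of $W$.
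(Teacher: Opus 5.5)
Your proof is correct and follows the paper's argument essentially verbatim. You use the same three-way case split on $z_1$ ($z_1\in W$, $z_1\in B$, or $z_1=r$ ignited green) and the same appeal to \eqref{eqn:bait_rows} and \eqref{eqn:bait_columns} to exhibit at least $P+1$ vertices burned green at the spread step of time $2$, concluding with the same count $\abs{V(\Gamma)}-(P+1)=\tau-1$.
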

\begin{proof}
At the end of time $1$ the vertex $z_1$ is the only burned vertex, and we write $c\in\{\pm1\}$ for
its colour. Suppose $z_1\in W$. At the spread step of time $2$ every $b\in B$ has $z_1$ as its only
burned neighbour, so $b$ is burned with the colour $\sigma(bz_1)c$, and by \eqref{eqn:bait_rows}
exactly $M/2=P+1$ vertices of $B$ are burned green. Suppose $z_1\in B$. At the spread step of time
$2$ every $x\in W$ has $z_1$ as its only burned neighbour, so $x$ is burned with the colour
$\sigma(xz_1)c$, and by \eqref{eqn:bait_columns} at least $P+1$ vertices of $W$ are burned green.
Suppose finally that $z_1=r$ and $c=-1$. Then all $M\ge P+1$ vertices of $B$ are burned green at
the spread step of time $2$. Burned vertices never change their colour, so in each case
$\abs{\mathcal{Y}_{\mathcal{I}}}\le\abs{V(\Gamma)}-(P+1)=\tau-1$.
\end{proof}

From now on we consider burning sequences that ignite $r$ in yellow at time $1$.

\begin{lemma}\label{lem:after_r}
Let $\mathcal{I}\in\mathcal{B}(\Gamma)$ ignite $r$ in yellow at time $1$ and write
$\nu_t=\nu_{\mathcal{I},t}$. At the end of the spread step of time $2$ the burned vertices are
exactly $r$, the vertices of $B$, and the vertices $p^{v,i}_1$ with $v\in V(G)$ and $i\in[w]$, and
all of them are yellow. Moreover $z_s\in W$ for every $s\ge2$, and
$\sum_{b\in B}\sigma(xb)\nu_{t-1}(b)=0$ for every $x\in W$ and every $t\ge3$.
\end{lemma}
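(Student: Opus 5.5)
The plan is a direct simulation of the first two time steps, followed by an induction on $t\ge3$ for the bait-balance identity. At time $1$ only $r$ is ignited, and it is yellow. The neighbours of $r$ are exactly the vertices of $B$ and the first internal vertices $p^{v,i}_1$ of the timer paths, all joined to $r$ by positive edges. This uses the fact that $k\ge2$, so no vertex of $G$ is adjacent to $r$. At the spread step of time $2$, any unburned vertex with a burned neighbour must be adjacent to $r$, and $r$ is its only burned neighbour. Hence every $b\in B$ and every $p^{v,i}_1$ is burned with colour $\sigma(r\cdot)\cdot 1=1$, that is, yellow. Every other vertex of $\Gamma$ has no burned neighbour and stays unburned. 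This settles the first assertion.

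For $z_s\in W$ with $s\ge2$: the vertex set of $\Gamma$ is $\{r\}\sqcup W\sqcup B$. At the end of the spread step of time $2$, both $r$ and all of $B$ are already burned, and burned vertices stay burned. So every later ignition, which must choose an unburned vertex, lies in $W$.

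For the balance identity, I will show that $\nu_{t-1}(b)=1$ for all $b\in B$ whenever $t\ge3$. Indeed $t-1\ge2$, and at the end of time $2$ (after the ignition step, which does not touch $B$) every $b\in B$ is yellow. Colours never change afterwards. Then for $x\in W$,
\[
\sum_{b\in B}\sigma(xb)\nu_{t-1}(b)=\sum_{b\in B}\sigma(xb)=(P+1)-(P+1)=0
\]
by \eqref{eqn:bait_rows}. There is no real obstacle here. The only point needing care is the ordering within time step $2$: the spread step burns $B$ before the ignition step. As a result, the claim for $t\ge3$ refers to the configuration $\nu_{t-1}$ at the end of a time $\ge2$, in which all of $B$ is burned yellow. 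I would also note that no vertex of $B$ could be targeted by the ignition at time $2$, since it is already burned, which confirms $z_2\in W$.
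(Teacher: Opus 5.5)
Your proposal is correct and follows essentially the same route as the paper. You simulate the spread step of time $2$ from $r$, whose neighbours are exactly $B$ and the $p^{v,i}_1$, all joined by positive edges. You deduce $z_s\in W$ from $V(\Gamma)=W\sqcup B\sqcup\{r\}$, and you reduce the bait sum to $\sum_{b\in B}\sigma(xb)=0$ via \eqref{eqn:bait_rows}. The induction on $t$ announced in your plan is not needed, since, as you note yourself, colours never change once assigned.
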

\begin{proof}
At the spread step of time $2$ the vertex $r$ is the only burned vertex. Its neighbours are the
vertices of $B$ and the vertices $p^{v,i}_1$, and all the corresponding edges are positive. So each
of them has $h_2=1$ and is burned yellow, while every other vertex has $h_2=0$ and stays unburned.
The ignition step only chooses unburned vertices, and $r$ is burned at time $1$ while every vertex
of $B$ is burned at the spread step of time $2$. Since $V(\Gamma)=W\sqcup B\sqcup\{r\}$, this gives
$z_s\in W$ for every $s\ge2$. For $t\ge3$ every vertex of $B$ is burned yellow at the end of time
$t-1$, so the sum in question equals $\sum_{b\in B}\sigma(xb)$, which vanishes by
\eqref{eqn:bait_rows}.
\end{proof}

\begin{lemma}\label{lem:witness}
Let $\mathcal{I}\in\mathcal{B}(\Gamma)$ ignite $r$ in yellow at time $1$ and let $u\notin B$ be a
burned vertex. Then there is a time $s$ with $d'(z_s,u)\le\beta(u)-s$.
\end{lemma}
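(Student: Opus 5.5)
We argue by strong induction on $\beta(u)$, tracing the fire that burns $u$ backwards in time along edges of $\Gamma'$. Lemma~\ref{lem:after_r} shows that bait vertices never contribute to this tracing, so the path stays inside $\Gamma'$.

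\emph{Base cases.} If $u$ is ignited, say $u=z_s$, then $\beta(u)=s$, and $s$ itself works since $d'(z_s,u)=0$. In particular this covers $u=r$, with $s=1$.

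\emph{Inductive step.} Suppose $u$ is not ignited. Then $u$ is burned in the spread step of time $t=\beta(u)$, so $h_t(u)\neq 0$. By Lemma~\ref{lem:after_r}, the vertices burned by spreading at time $2$ are exactly $B$ and the $p^{v,i}_1$. Since $u\notin B$, if $t=2$ then $u=p^{v,i}_1$ is a neighbour of $r=z_1$ in $\Gamma'$. In that case $d'(z_1,u)=1=\beta(u)-1$, as required.

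If $t\ge3$, then $u\in W$ because $u\ne r$. Lemma~\ref{lem:after_r} gives that the bait contribution $\sum_{b\in B}\sigma(ub)\nu_{t-1}(b)$ vanishes. Hence
\[
h_t(u)=\sum_{x\in N(u)\setminus B}\sigma(ux)\nu_{t-1}(x)\neq0,
\]
so some neighbour $x\notin B$ of $u$ satisfies $\nu_{t-1}(x)\neq0$, that is, $\beta(x)\le t-1$. The edge $ux$ lies in $\Gamma'$.

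By the induction hypothesis applied to $x$ (note $\beta(x)<\beta(u)$ and $x\notin B$), there is a time $s$ with $d'(z_s,x)\le\beta(x)-s\le t-1-s$. The triangle inequality then gives
\[
d'(z_s,u)\le d'(z_s,x)+1\le t-s=\beta(u)-s.
\]

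The only delicate point is that spreading through bait vertices could in principle shortcut distances in $\Gamma'$. This is exactly what the cancellation in Lemma~\ref{lem:after_r} excludes for $t\ge3$. The case $t=2$ is handled directly, because at that time $r$ is the sole burned vertex.
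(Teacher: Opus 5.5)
Your proof is correct and is the argument the paper intends. The paper's proof consists only of the remark that, by Lemma~\ref{lem:after_r}, every ignited vertex lies in $W\cup\{r\}$, so that $d'(z_s,u)$ makes sense, and it leaves the backward tracing of the fire implicit. Your strong induction on $\beta(u)$ makes that tracing explicit, and in particular it supplies the point the paper passes over: the fire cannot shortcut through $B$, because at the end of time $1$ only $r$ is burned, and for $t\ge 3$ the bait contributions to $h_t$ cancel.
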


\begin{proof}
By Lemma~\ref{lem:after_r} every ignited vertex lies in $W\cup\{r\}$.
\end{proof}

\begin{lemma}\label{lem:distance}
Define $\pi\colon W\to V(G)$ by $\pi(x)=x$ for $x\in V(G)$, and $\pi(x)=v$ if $x\in Q(v)$ or $x$ is
an internal vertex of a timer path of $v$. Then $d'(r,u)=k$ for every $u\in V(G)$. Moreover, if
$x\in W$ and $u\in V(G)$ satisfy $d'(x,u)\le k$, then $d_G(\pi(x),u)\le d'(x,u)$.
\end{lemma}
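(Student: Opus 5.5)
We need to prove Lemma~\ref{lem:distance}, which has two parts: that $d'(r,u)=k$ for every $u\in V(G)$, and a distance comparison for $x\in W$ with $d'(x,u)\le k$. Both are purely structural facts about $\Gamma'=\Gamma-B$, whose vertex set is $W\cup\{r\}$, and I would argue from its explicit structure. The edges of $\Gamma'$ are of four kinds: edges of $G$; pendant edges $vq$ with $q\in Q(v)$; edges along timer paths; and the edges $rp^{v,i}_1$. Removing $r$ leaves each vertex $v\in V(G)$ attached to its own pendants and to its own timer path interiors. Those attachments are pendant trees and paths hanging off $v$. Within each timer path, the interior is a path from $p^{v,i}_1$ to $p^{v,i}_{k-1}$, and only $p^{v,i}_{k-1}$ touches $V(G)$, namely at $v$.

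For the first claim, the upper bound $d'(r,u)\le k$ is witnessed by any timer path of $u$, which has exactly $k$ edges. For the lower bound, take a shortest $r$--$u$ path in $\Gamma'$. Its first step goes to some $p^{v,i}_1$, because the only neighbours of $r$ in $\Gamma'$ are these vertices. The internal vertices of a timer path have degree $2$ in $\Gamma'$. So the path must traverse $p^{v,i}_1,\dots,p^{v,i}_{k-1}$ and reach $v$, having already used $k$ edges. Hence the distance is at least $k$.

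For the second claim, fix a shortest path $x=y_0,y_1,\dots,y_d=u$ in $\Gamma'$ with $d\le k$. The first thing to establish is that this path avoids $r$. If it passed through $r$, then the segment from $r$ to $u$ would already have length $k$ by the first claim. This forces $x=r$, but $r\notin W$. Next, define the first index $j$ with $y_j\in V(G)$; such an index exists because $u\in V(G)$. I claim $y_j=\pi(x)$ and $j\ge 0$. The point is that before reaching $V(G)$, a walk starting at a non-$G$ vertex and avoiding $r$ stays inside the pendants and timer interiors of $\pi(x)$. Those attach only to $\pi(x)$, so $V(G)$ is first entered at $\pi(x)$. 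From $y_j$ onward the walk could in principle leave $V(G)$ again, into pendants or timer interiors, but any such excursion that avoids $r$ must return through the same vertex of $G$. Deleting these excursions therefore yields a walk in $G$ from $\pi(x)$ to $u$ of length at most $d-j\le d$. This gives $d_G(\pi(x),u)\le d'(x,u)$.

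The main obstacle is making the excursion argument rigorous. The cleanest route is to observe that $\Gamma'-r$ is $G$ with trees (pendant stars and paths) attached at single vertices, so the map $\pi$, extended by the identity on $V(G)$, is a graph homomorphism onto $G$ up to loops. Concretely, every edge of $\Gamma'-r$ either maps to an edge of $G$ or collapses to a single vertex. Applying $\pi$ to the $r$-free shortest path then gives a walk in $G$ of length at most $d$ from $\pi(x)$ to $\pi(u)=u$, which is exactly what is needed.
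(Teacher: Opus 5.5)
Your proof is correct and follows the paper's argument. You get $d'(r,u)=k$ from the degree-$2$ structure of timer interiors, rule out shortest walks through $r$ because they have length at least $1+k$, and then use that $\Gamma'-r$ is $G$ with pendants and timer interiors attached at single vertices. Your observation that $\pi$ sends every edge of $\Gamma'-r$ to an edge of $G$ or collapses it to a vertex is a tidier formalization of the paper's remark that an $r$-avoiding walk from $x$ to $u$ passes through $\pi(x)$ and has length at least $d_G(\pi(x),u)$.
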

\begin{proof}
In $\Gamma'$ the neighbours of $r$ are the vertices $p^{v,i}_1$, and the neighbours of an internal
vertex of a timer path lie on that path. So a walk in $\Gamma'$ from $r$ to a vertex of $G$
traverses a whole timer path and has length at least $k$, with equality along a timer path of $u$.
Hence $d'(r,u)=k$. Deleting $r$ from $\Gamma'$ leaves $G$ together with the paths
$p^{v,i}_1\cdots p^{v,i}_{k-1}$ and the pendants, each attached to $G$ at the single vertex $v$ with
$\pi$-value $v$. So a walk from $x$ to $u$ avoiding $r$ passes through $\pi(x)$ and has length at
least $d_G(\pi(x),u)$, while a walk from $x$ to $u$ through $r$ has length at least
$1+d'(r,u)=k+1$. Thus $d'(x,u)\le k$ forces a shortest $x,u$ walk to avoid $r$, and the claim
follows.
\end{proof}

The following is straightforward to see.

\begin{lemma}\label{lem:timer_wave}
Let $\mathcal{I}\in\mathcal{B}(\Gamma)$ ignite $r$ in yellow at time $1$, let $v\in V(G)$, let
$i\in[w]$ and write $p_j=p^{v,i}_j$. Let $1\le t\le k$ and suppose that $v$ is unburned at the end
of time $t-1$ and that no $p_j$ is ignited at a time in $\{2,\ldots,t\}$. Then at the end of time
$t$ the vertices $p_1,\ldots,p_{t-1}$ are burned yellow and the vertices $p_t,\ldots,p_{k-1}$ are
unburned.
\end{lemma}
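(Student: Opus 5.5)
We argue by induction on $t$, tracking the state of the path $r\,p_1\cdots p_{k-1}\,v$ at the end of each time step; the claim at time $t$ is exactly the induction hypothesis. The hypotheses are monotone in $t$: if $v$ is unburned at the end of time $t-1$, it was unburned at the end of every earlier time, and the set $\{2,\ldots,s\}$ is contained in $\{2,\ldots,t\}$ for $s\le t$. So the statement for $t$ can be deduced from the statement for $t-1$ under the same assumptions.

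For $t=1$ only $r$ is ignited. Since $z_1=r$, no $p_j$ is burned at the end of time $1$, which is the claim, the list $p_1,\ldots,p_0$ being empty. For $t=2$, Lemma~\ref{lem:after_r} says that at the end of the spread step of time $2$ the vertex $p_1$ is burned yellow. It also says that no other $p_j$ with $j\ge2$ is burned, since those lie in $W$ but are not of the form $p_1^{v',i'}$. The ignition at time $2$ is not a $p_j$ by hypothesis, so the state at the end of time $2$ is as claimed.

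For the step from $t-1$ to $t$, with $3\le t\le k$, assume that at the end of time $t-1$ the vertices $p_1,\ldots,p_{t-2}$ are yellow and $p_{t-1},\ldots,p_{k-1}$ are unburned. We compute $h_t(p_j)$ for every unburned $p_j$ using Remark~\ref{rem:arithmetic}. The neighbours of $p_j$ are $p_{j-1}$ and $p_{j+1}$, where $p_0=r$ and $p_k=v$ (the edge $p_{k-1}v$ is negative), together with the bait. By Lemma~\ref{lem:after_r} the bait contributes $0$ for $t\ge3$. For $j=t-1$ the left neighbour $p_{t-2}$ (or $r$ when $t=2$) is yellow along a positive edge. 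The right neighbour is either an unburned $p_t$ or, when $t-1=k-1$, the vertex $v$, which is unburned at the end of time $t-1$ by hypothesis. Hence $h_t(p_{t-1})=1$ and $p_{t-1}$ burns yellow. For $j\ge t$ both path-neighbours are unburned: $p_{j-1}$ is unburned because $j-1\ge t-1$, and $p_{j+1}$, or $v$, is unburned as well. Hence $h_t(p_j)=0$ and $p_j$ stays unburned. The ignition at time $t$ avoids every $p_j$ by hypothesis, so burned vertices keep their colours and the claim holds at the end of time $t$.

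The only point needing care is the contribution of $v$ at the end of the path, which is handled by the hypothesis that $v$ is unburned at the end of time $t-1$. Equally important is the cancellation of the bait for $t\ge3$, supplied by Lemma~\ref{lem:after_r}. Once these two facts are in place, the rest of the argument is routine.
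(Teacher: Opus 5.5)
Your proof is correct and follows the paper's argument: an induction on $t$ with $p_0=r$ and $p_k=v$, using Lemma~\ref{lem:after_r} to cancel the bait for $t\ge3$ and the hypothesis on $v$ for the last path vertex. The only difference is that you treat $t=2$ directly via Lemma~\ref{lem:after_r}, whereas the paper folds it into the inductive step by noting that $B$ is still unburned at the end of time $1$; also, your parenthetical ``or $r$ when $t=2$'' is vacuous, since your step only covers $3\le t\le k$.
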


\begin{proof}
Put $p_0=r$ and $p_k=v$, so that the neighbours of $p_j$ outside $B$ are $p_{j-1}$ and $p_{j+1}$
for $1\le j\le k-1$. We induct on $t$. For $t=1$ nothing but $r$ is burned at the end of time $1$,
which is the assertion. Let $2\le t\le k$. The induction hypothesis applies at $t-1$ and gives that
at the end of time $t-1$ the vertices $p_1,\ldots,p_{t-2}$ are burned yellow and
$p_{t-1},\ldots,p_{k-1}$ are unburned. Consider the spread step of time $t$. The vertices of $B$
are unburned at the end of time $1$, and they contribute $0$ to $h_t$ for $t\ge3$ by
Lemma~\ref{lem:after_r}. So $h_t(p_j)$ is determined by $p_{j-1}$ and $p_{j+1}$. For $j=t-1$ the
vertex $p_{t-2}$ is yellow and joined to $p_{t-1}$ by a positive edge, while $p_t$ is unburned,
where for $t-1=k-1$ we use that $v$ is unburned at the end of time $t-1$. Hence $h_t(p_{t-1})=1$
and $p_{t-1}$ is burned yellow. For $j\ge t$ both $p_{j-1}$ and $p_{j+1}$ are unburned at the end
of time $t-1$, so $h_t(p_j)=0$ and $p_j$ stays unburned at the spread step. It is not ignited at
time $t$ by hypothesis.
\end{proof}

Finally, we show that our reduction mapping maps \textsf{Yes} cases to \textsf{Yes} cases (Lemma~\ref{lem:yes_case}) and \textsf{No} cases to \textsf{No} cases (Lemma~\ref{lem:no_case}).

\begin{lemma}\label{lem:yes_case}
If $b(G)\le m$, then $Y(\Gamma)\ge\tau$.
\end{lemma}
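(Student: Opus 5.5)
The plan is to turn a burning sequence of $G$ witnessing $b(G)\le m$ into a burning sequence of $\Gamma$ that burns everything outside the timer paths yellow. That set has exactly $|V(\Gamma)|-P=\tau$ vertices, so this gives $Y(\Gamma)\ge\tau$.

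By Lemma~\ref{lem:burning_cover}, fix $y_1,\ldots,y_m$ with $V(G)=\bigcup_i N^{m-i}_G[y_i]$. Define $\mathcal{I}$ in three phases.
\begin{itemize}
\item At time $1$, ignite $r$ in yellow.
\item At time $i+1$, for $i=1,\ldots,m$ (so at times $2,\ldots,k$), ignite $y_i$ in yellow. If $y_i$ is already burned, ignite in yellow some unburned pendant instead.
\item After time $k$, ignite every remaining unburned vertex in yellow, one per step, until the process terminates.
\end{itemize}
A pendant substitute exists: at time $s\le k$ a pendant of $v$ can only be burned if $v$ is, and $L\ge k$, so some pendant of $v$ is still unburned. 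Such a pendant substitute is harmless, since its only non-bait neighbour is $v$ and the edge is positive.

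The core claim is that every $u\in V(G)$ is burned yellow by the end of time $k$. First I would pin down the neighbourhood of $u$ before time $k$.
\begin{itemize}
\item By Lemma~\ref{lem:after_r}, the bait contributes $0$ to $h_t(u)$ for all $t\ge3$.
\item By Lemma~\ref{lem:timer_wave}, applied with $t\le k-1$ (no path vertex is ignited before time $k+1$), the last vertex $p^{v,i}_{k-1}$ of each timer path of $u$ is unburned at the end of every time $t\le k-1$. So the negative edges exert no influence at any spread step up to time $k$.
\item Pendants of $u$ are either unburned, ignited yellow, or burned from $u$ itself.
\end{itemize}
Hence, at every spread step $t\le k$, the only nonzero contributions to $h_t(u)$ come from burned $G$-neighbours via positive edges.

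An induction on $t$ then shows that every vertex of $V(G)\cup\bigcup_v Q(v)$ burned by time $k$ is yellow. Next, if $d_G(y_i,u)=d\le m-i$, then $u$ is burned by time $i+1+d\le m+1=k$. This follows by induction along a shortest path: a vertex with a yellow neighbour and no green neighbour has $h>0$. Lemma~\ref{lem:witness} is not needed.

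After time $k$, every vertex of $G$ is already yellow. Each pendant of a yellow $v$ is then burned yellow at the next spread step, or is ignited yellow in the third phase. Vertices of $B$ and $r$ are yellow by Lemma~\ref{lem:after_r}. Every other vertex is a timer vertex, and its colour does not matter. It remains to check that the third phase terminates, which it does because each step ignites one vertex. Altogether the yellow vertices include $\{r\}\cup B\cup V(G)\cup\bigcup_v Q(v)$, of size $1+M+n(L+1)=\tau$.

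The step I expect to be the main obstacle is the timing bookkeeping at the boundary time $k$. There I must make sure that no vertex of $G$ sees the green influence of a timer path (via the negative edge from $p^{v,i}_{k-1}$) at a spread step before it is burned. This rests on applying Lemma~\ref{lem:timer_wave} with $t=k-1$, which needs $v$ unburned at the end of time $k-2$. If $v$ was already burned earlier, it is yellow by the induction and nothing more is needed. So I would organize the argument as a single induction on $t\le k$, carrying both statements together.
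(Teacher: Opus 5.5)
Apart from one step, your argument is the paper's. You ignite $r$ yellow and then the $y_i$ in yellow. You invoke Lemma~\ref{lem:timer_wave} with $t-1$ in place of $t$ to keep the negative timer edges silent at every still-unburned vertex of $G$. The yellow fire then crosses $G$ at unit speed, so $G$ is yellow by time $k$, and finally you collect the pendants, $B$ and $r$.

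The step that fails is your filler rule. The claim that an unburned pendant is available whenever $y_i$ is already burned is false. Your justification (pendants of an unburned $v$ can only burn by ignition, and $L\ge k$) needs some vertex of $G$ to be unburned, but $G$ can be completely burned well before time $k$. Take $G=K_{1,3}$ with centre $c$ and leaves $\ell_1,\ell_2,\ell_3$, and $m=3$, so $k=4$. Choose $y_1=c$, $y_2=\ell_1$, $y_3=\ell_2$, which is admissible in Lemma~\ref{lem:burning_cover}.
\begin{enumerate}
\item At the spread step of time $3$, the leaves and the pendants of $c$ burn. So your filler at time $3$ is a pendant of a leaf.
\item At the spread step of time $4$, every remaining pendant of the leaves burns.
\item At the ignition step of time $4$, the vertex $y_3$ is burned and no pendant is unburned. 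Your sequence is therefore not defined.
\end{enumerate}
A smaller point: a yellow ignited pendant of an unburned $u$ contributes $+1$ to $h_t(u)$. So your sentence that only $G$-neighbours contribute is inaccurate, though harmless, since that contribution is positive.

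The repair is easy. Suppose no unburned pendant exists at the ignition step of some time $s\le k$. Then every vertex of $G$ is burned, because an unburned $v\in V(G)$ has at most $s-2\le k-2<L$ ignited pendants, and its other pendants are still unburned. By your induction all of $G$ is then yellow, so you may ignite an arbitrary unburned vertex. The condition that no timer vertex is ignited is only needed for vertices of $G$ that are still unburned, and there are none. Each pendant sees only its own vertex of $G$, which is yellow.

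The paper builds this in from the start. Its filler is an unburned vertex of $G$ while one exists, and an arbitrary unburned vertex otherwise. This keeps every ignition at times $2\le s<T$ inside $V(G)$, so no pendant or timer vertex is touched before $G$ is fully burned. As a result $h_t(v)$ is exactly the number of burned $G$-neighbours of $v$.
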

\begin{proof}
Let $y_1,\ldots,y_m$ be as in Lemma~\ref{lem:burning_cover}. Define $\mathcal{I}$ as follows.
Ignite $r$ yellow at time $1$. At a time $s\ge2$, ignite $y_{s-1}$ yellow if $s\le m+1$ and
$y_{s-1}$ is unburned; otherwise ignite an unburned vertex of $G$ yellow if one exists, and an
arbitrary unburned vertex yellow if not. Let $T$ be the first time at the end of which every vertex
of $G$ is burned. If $2\le s<T$, then some vertex of $G$ is unburned at the ignition step of time
$s$, so $z_s\in V(G)$.

We claim that for every $t\le k$, every vertex of $G$ that is burned at the end of time $t$ is
yellow. The case $t=1$ holds because only $r$ is burned then. Let $2\le t\le k$ and let $v\in V(G)$
be unburned at the end of time $t-1$. Then $t-1<T$, so no pendant and no internal vertex of a timer
path is ignited at a time in $\{2,\ldots,t-1\}$. Every $x\in Q(v)$ has $v$ as its only neighbour
outside $B$, and $B$ is unburned at the end of time $1$ and contributes $0$ to $h_{t'}$ for
$t'\ge3$ by Lemma~\ref{lem:after_r}. So every pendant of $v$ is unburned at the end of time $t-1$.
Lemma~\ref{lem:timer_wave}, applied with $t-1$ in place of $t$, shows that $p^{v,i}_{k-1}$ is
unburned at the end of time $t-1$ for every $i\in[w]$, because $t-1\le k-1$. Hence all burned
neighbours of $v$ at the spread step of time $t$ lie in $V(G)$ and are joined to $v$ by positive
edges. By the induction hypothesis they are yellow, so $h_t(v)$ is the number of burned neighbours
of $v$ in $G$. Therefore $v$ is burned yellow at time $t$ if it has a burned neighbour in $G$ at the
end of time $t-1$, and it stays unburned otherwise. As the ignitions are yellow, the claim follows.

The same reasoning shows that the fire spreads inside $G$ at unit speed up to time $k$. The
vertex $y_i$ is burned at the end of time $i+1$ at the latest, so every vertex of
$N^{m-i}_G[y_i]$ is burned at the end of time $(i+1)+(m-i)=k$. By Lemma~\ref{lem:burning_cover},
every vertex of $G$ is burned yellow at the end of time $k$.

Let $v\in V(G)$ and $x\in Q(v)$. If $x$ is ignited, then it is ignited yellow. Otherwise, $v$ is
its only neighbour outside $B$, so $x$ is burned at the spread step of time $\beta(v)+1\ge3$ with
the colour $\nu_{\mathcal{I}}(v)=1$, the edge $xv$ being positive. So every pendant is yellow.
Consequently $r$, the $M$ vertices of $B$, the $n$ vertices of $G$ and the $nL$ pendants are all
yellow, whence $\abs{\mathcal{Y}_{\mathcal{I}}}\ge M+1+n(L+1)=\tau$.
\end{proof}

\begin{lemma}\label{lem:no_case}
If $b(G)>m$, then $Y(\Gamma)\le\tau-1$.
\end{lemma}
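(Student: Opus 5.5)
By Lemma~\ref{lem:bait} we only need to consider burning sequences $\mathcal{I}\in\mathcal{B}(\Gamma)$ that ignite $r$ in yellow at time $1$. Since $\abs{V(\Gamma)}=\tau+P$, it then suffices to exhibit at least $P+1$ green vertices in the final colouring. The plan is to show three things in turn. Some vertex $u\in V(G)$ is still unburned after time $k$. The timer paths of $u$ then force $u$ green at time $k+1$. Finally $u$ drags almost all of its $L=P+k$ pendants to green.

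\emph{Step 1: an unburned vertex of $G$.} Suppose every $u\in V(G)$ has $\beta(u)\le k$. By Lemma~\ref{lem:witness} there is a time $s$ with $d'(z_s,u)\le\beta(u)-s\le k-s$. The choice $s=1$ is impossible, because $z_1=r$ and $d'(r,u)=k$ by Lemma~\ref{lem:distance}. Hence $2\le s\le k$ and $d'(z_s,u)\le k-s<k$. The second part of Lemma~\ref{lem:distance} then gives $d_G(\pi(z_s),u)\le k-s=m-(s-1)$. Put $y_i=\pi(z_{i+1})$ for $1\le i\le m$. Then $V(G)=\bigcup_i N^{m-i}_G[y_i]$, so $b(G)\le m$ by Lemma~\ref{lem:burning_cover}, a contradiction. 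So we fix $u\in V(G)$ with $\beta(u)\ge k+1$.

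\emph{Step 2: $u$ burns green at time $k+1$.} Among the $w$ timer paths of $u$, at most $k-1$ contain a vertex ignited at a time in $\{2,\ldots,k\}$, since only $k-1$ ignitions occur then. Every other timer path satisfies the hypotheses of Lemma~\ref{lem:timer_wave} with $t=k$, so its vertex $p^{u,i}_{k-1}$ is yellow at the end of time $k$. Across the negative edge it contributes $-1$ to $h_{k+1}(u)$, giving at least $w-(k-1)$ negative contributions. The bait contributes $0$ by Lemma~\ref{lem:after_r}. The positive contributions come from three sources:
\begin{itemize}
\item at most $\Delta$ neighbours in $G$;
\item pendants of $u$, which are only burned if ignited, because their only non-bait neighbour is the unburned $u$;
\item the $p^{u,i}_{k-1}$ of touched paths.
\end{itemize}
The last two are charged to the at most $k-1$ ignitions at times $2,\ldots,k$. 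Hence
\[
  h_{k+1}(u)\le \Delta+(k-1)-(w-k+1)=\Delta+2k-2-w=-1,
\]
and $u$ burns green at time $k+1$ because it is unburned before that spread step. This counting, in particular checking that a touched path contributes at most $+1$ and that pendants cannot burn before $u$ except by ignition, is the delicate step. I would verify it carefully using the neighbourhood structure recorded in the proof of Lemma~\ref{lem:timer_wave}.

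\emph{Step 3: pendants.} A pendant $x\in Q(u)$ has $u$ as its only neighbour outside $B$, and the bait contributes nothing to $h$ at times $t\ge3$. So $x$ stays unburned until it is ignited, or until the spread step of time $\beta(u)+1=k+2$, when it takes the colour of $u$ via the positive edge, that is, green. Only the ignitions at times $2,\ldots,k+1$, at most $k$ of them, can hit $Q(u)$ earlier. Thus at least $L-k=P$ pendants of $u$ are green. Together with $u$ itself this gives at least $P+1$ green vertices, so $\abs{\mathcal{Y}_{\mathcal{I}}}\le\tau+P-(P+1)=\tau-1$, and hence $Y(\Gamma)\le\tau-1$.
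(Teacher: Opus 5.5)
Your proposal is correct and follows the paper's proof essentially step for step. It has the same three parts: the covering contradiction via Lemmas~\ref{lem:witness}, \ref{lem:distance} and \ref{lem:burning_cover}; the timer-path count $h_{k+1}(u)\le\Delta+2(k-1)-w=-1$, which the paper writes as $\Delta+b+2a-w$ with $a+b\le k-1$; and the at least $L-k$ green pendants. Two small details are worth adding, as the paper does: choose $y_i$ arbitrarily when there is no ignition at time $i+1$, and cite Lemma~\ref{lem:after_r} to get $z_s\in W$ before applying $\pi$.
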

\begin{proof}
Let $\mathcal{I}\in\mathcal{B}(\Gamma)$. By Lemma~\ref{lem:bait} we may assume that $\mathcal{I}$
ignites $r$ yellow at time $1$.

Suppose first that every vertex of $G$ is burned at the end of time $k$. Let $u\in V(G)$. By
Lemma~\ref{lem:witness} there is a time $s$ with $d'(z_s,u)\le\beta(u)-s\le k-s$, so $s\le k$. If
$s=1$, then $z_1=r$ and $d'(r,u)\le k-1$, contradicting Lemma~\ref{lem:distance}. So $s\ge2$, and
$z_s\in W$ by Lemma~\ref{lem:after_r}. Hence Lemma~\ref{lem:distance} gives
$d_G(\pi(z_s),u)\le k-s$. Put $y_i=\pi(z_{i+1})$ for $1\le i\le m$, which is defined by
Lemma~\ref{lem:after_r}, taking for $y_i$ an arbitrary vertex of $G$ if $\mathcal{I}$ has no
ignition at time $i+1$. Since $k-s=m-i$ for $s=i+1$, we obtain
$V(G)=\bigcup_{i=1}^{m}N^{m-i}_G[y_i]$, so $b(G)\le m$ by Lemma~\ref{lem:burning_cover}, a
contradiction. 

So let $v\in V(G)$ be unburned at the end of time $k$. Call a timer path of $v$ \emph{touched} if
one of its internal vertices is ignited at a time in $\{2,\ldots,k\}$. Let $a$ be the number of
touched timer paths of $v$ and let $b$ be the number of pendants of $v$ ignited at a time in
$\{2,\ldots,k\}$. Distinct timer paths of $v$ have disjoint sets of internal vertices, and these
are disjoint from $Q(v)$. So each of the $a+b$ objects counted uses up one of the $k-1$ ignitions
at times $2,\ldots,k$, and $a+b\le k-1$.

Let the $i$-th timer path of $v$ be untouched. Lemma~\ref{lem:timer_wave}, applied with $t=k$,
gives that $p^{v,i}_{k-1}$ is burned yellow at the end of time $k$. The edge $p^{v,i}_{k-1}v$ is
negative, so this vertex contributes $-1$ to $h_{k+1}(v)$. The neighbours of $v$ are its at most
$\Delta$ neighbours in $G$, its $L$ pendants, the $w$ ends of its timer paths, and the vertices of
$B$. The vertices of $B$ contribute $0$ by Lemma~\ref{lem:after_r}. A pendant of $v$ has $v$ as its
only neighbour outside $B$, so it is burned at the end of time $k$ only if it is ignited at a time
in $\{2,\ldots,k\}$. Every contribution other than those of the $w-a$ untouched timer paths is at
most $1$. Hence
\[
  h_{k+1}(v)\;\le\;\Delta+b+a-(w-a)\;=\;\Delta+b+2a-w\;\le\;\Delta+2(k-1)-w\;=\;-1 ,
\]
and $v$ is burned green at the spread step of time $k+1$.

At most $k$ pendants of $v$ are ignited at times $2,\ldots,k+1$. Let $x\in Q(v)$ be a pendant that
is not one of them. At the spread step of time $k+1$ the vertex $x$ has no burned neighbour outside
$B$, since $v$ is unburned at the end of time $k$, so $x$ is unburned at the end of time $k+1$. At
the spread step of time $k+2$ its only burned neighbour outside $B$ is $v$, which is green, and the
edge $xv$ is positive. So $x$ is burned green. Hence at least $L-k$ pendants of $v$ are green, and
together with $v$ this gives
\[
  \abs{\mathcal{Y}_{\mathcal{I}}}\;\le\;\abs{V(\Gamma)}-(L-k+1)\;=\;\tau+P-(P+1)\;=\;\tau-1 .
\]
As $\mathcal{I}$ was arbitrary, $Y(\Gamma)\le\tau-1$.
\end{proof}

\begin{figure}[t]
    \centering
\begin{tikzpicture}[
  every node/.style={font=\small},
  gvtx/.style={circle,draw,fill=white,minimum size=6mm,inner sep=0pt,font=\scriptsize},
  pend/.style={circle,draw,fill=green!8,minimum size=4.2mm,inner sep=0pt,font=\tiny},
  tint/.style={circle,draw,fill=white,minimum size=4.0mm,inner sep=0pt,font=\tiny},
  src/.style={circle,draw,fill=black!12,minimum size=8.5mm,inner sep=0pt,font=\normalsize},
  bait/.style={circle,draw,fill=black!5,minimum size=5.4mm,inner sep=0pt,font=\scriptsize},
  pedge/.style={-,green!55!black,thick},
  nedge/.style={-,red!75!black,thick,dashed},
  pedgelab/.style={green!45!black,font=\scriptsize},
  nedgelab/.style={red!70!black,font=\scriptsize},
  boxlab/.style={font=\footnotesize\bfseries},
]

% ---------- ROOT ----------
\node[src] (r) at (0,0) {$r$};

% ---------- GRAPH G (two representative vertices u, v with an edge) ----------
\def\gx{7.8}
\node[gvtx] (v) at (\gx, 1.4) {$v$};
\node[gvtx] (u) at (\gx, -1.4) {$u$};
\draw[pedge] (v) -- (u);   % edge of G, positive
\node[pedgelab] at (\gx+0.35,0.0) {$+$};

\begin{scope}[on background layer]
  \node[draw,rounded corners=7pt,fill=black!2,thick,fit=(v)(u),inner sep=13pt] (gbox) {};
\end{scope}
\node[boxlab,align=center] at ($(gbox.north)+(0,0.4)$)
  {graph $G$ (edges $+$)};

% ---------- TIMER PATHS for v (w of them; show 2 + dots) ----------
% each path: r - p1 - ... - p_{k-1} - v, all + except last edge (p_{k-1} v) negative.
% show k-1 = 3 internal vertices schematically (dotted middle).
\newcommand{\timerpath}[5]{% #1=yshift #2=name-prefix #3=target(v/u) #4=label side #5=pathlabel
  \node[tint] (#2a) at (2.0,#1) {};
  \node[tint] (#2b) at (3.4,#1) {};
  \node[tint] (#2c) at (5.0,#1) {};
  \draw[pedge] (r) -- (#2a);
  \draw[pedge] (#2a) -- (#2b);
  \draw[pedge,dotted,line width=0.9pt] (#2b) -- (#2c);
  \draw[nedge] (#2c) -- (#3);
}
% two timer paths to v
\timerpath{2.5}{tva}{v}{}{}
\timerpath{1.7}{tvb}{v}{}{}
\node at (3.4,1.15) {$\vdots$};
% one more to v (bottom of its bundle) to suggest w of them
\node[tint] (tvc a) at (2.0,0.55) {};
\node[tint] (tvc b) at (3.4,0.55) {};
\node[tint] (tvc c) at (5.0,0.55) {};
\draw[pedge] (r) -- (tvc a); \draw[pedge] (tvc a) -- (tvc b);
\draw[pedge,dotted,line width=0.9pt] (tvc b) -- (tvc c);
\draw[nedge] (tvc c) -- (v);

% two timer paths to u
\timerpath{-0.55}{tua}{u}{}{}
\timerpath{-1.7}{tub}{u}{}{}
\node at (3.4,-1.15) {$\vdots$};

% brace: w timer paths, length k
\draw[decorate,decoration={brace,amplitude=5pt},thin]
  (1.7,2.9) -- (5.2,2.9)
  node[midway,above=4pt,align=center]{\scriptsize $w=\Delta+2k-1$ timer paths per vertex;\\[-1pt]\scriptsize each length $k$ ($k-1$ internal vtcs), all $+$ except last edge};
\node[nedgelab] at (5.75,1.15) {$-$};
\node[nedgelab] at (5.75,-1.75) {$-$};
\node[pedgelab] at (1.0,2.05) {$+$};

% ---------- PENDANTS Q(v), Q(u) ----------
\node[pend] (qv1) at (\gx+1.5, 2.3) {};
\node[pend] (qv2) at (\gx+1.9, 1.6) {};
\node[pend] (qv3) at (\gx+1.9, 0.8) {};
\node at (\gx+2.25,1.55) {$\cdots$};
\draw[pedge] (v)--(qv1); \draw[pedge] (v)--(qv2); \draw[pedge] (v)--(qv3);
\node[pend] (qu1) at (\gx+1.5, -2.3) {};
\node[pend] (qu2) at (\gx+1.9, -1.6) {};
\node[pend] (qu3) at (\gx+1.9, -0.8) {};
\node at (\gx+2.25,-1.55) {$\cdots$};
\draw[pedge] (u)--(qu1); \draw[pedge] (u)--(qu2); \draw[pedge] (u)--(qu3);
\node[pedgelab,align=center] at (\gx+2.55,2.55) {$L$ pendants\\[-2pt]$Q(v)$ ($+$)};

% ---------- BAIT B ----------
\def\bx{1.2}
\node[bait] (b1) at (\bx,-2.7) {$b_1$};
\node[bait] (b2) at (\bx+1.0,-3.1) {$b_2$};
\node[bait] (b3) at (\bx+2.0,-2.8) {$\cdots$};
\node[bait] (b4) at (\bx+2.9,-3.15) {$b_M$};
\begin{scope}[on background layer]
  \node[draw,rounded corners=7pt,fill=blue!3,thick,fit=(b1)(b2)(b4),inner sep=8pt] (baitbox) {};
\end{scope}
\node[boxlab,align=center] at ($(baitbox.south)+(0,-0.3)$)
  {bait $B$: $M=2P+2$ independent vtcs;\ $r$--$B$ all $+$;\ each $w_i$--$b_j$ sign $(-1)^{i+j}$};

% r -- B positive (representative)
\draw[pedge] (r) to[out=-90,in=150] (b1);
\node[pedgelab] at (0.35,-1.6) {$+$};
% W -- B mixed (representative: one + one -)
\draw[pedge,opacity=0.6] (v) to[out=-140,in=60] (b4);
\draw[nedge,opacity=0.6] (u) to[out=180,in=35] (b3);

% ---------- LEGEND ----------
\begin{scope}[shift={(-0.3,-4.55)}]
  \draw[pedge] (0,0) -- (0.8,0);
  \node[right=1pt,font=\scriptsize] at (0.8,0) {positive ($+$)};
  \draw[nedge] (3.1,0) -- (3.9,0);
  \node[right=1pt,font=\scriptsize] at (3.9,0) {negative ($-$)};
  \draw[pedge,dotted,line width=0.9pt] (6.4,0) -- (7.2,0);
  \node[right=1pt,font=\scriptsize] at (7.2,0) {timer-path interior};
\end{scope}

\end{tikzpicture}
    \caption{The undirected signed graph $\Gamma$ has the property that any exact calculation of $Y(\Gamma)$ encodes a solution to the burning number of the given graph~$G$.}
    \label{fig:ultimate}
\end{figure}

\begin{proof}[Proof of Theorem~\ref{thm1:USGBwoD_hardness}]
A burning sequence of $\Gamma$ has at most $\abs{V(\Gamma)}$ entries, and the burning process it
determines is simulated in polynomial time. So a burning sequence $\mathcal{I}$ with
$\abs{\mathcal{Y}_{\mathcal{I}}}\ge\tau$ is a certificate, and the problem lies in $\textsf{NP}$.

For the hardness, map an instance $(G,m)$ of Graph Burning with $1\le m<n$ to the instance
$(\Gamma(G,m),\tau)$, and map every instance with $m\ge n$ to a fixed $\textsf{Yes}$ instance. The
construction takes polynomial time. By Lemmas~\ref{lem:yes_case} and \ref{lem:no_case} we have
$Y(\Gamma)\ge\tau$ if and only if $b(G)\le m$. Since Graph Burning is $\textsf{NP}$-complete
\cite{BessyBJRR17}, deciding whether $Y(\Gamma)\ge\tau$ is $\textsf{NP}$-complete as well. Finally,
a polynomial-time algorithm computing $Y(\Gamma)$ exactly would decide Graph Burning in polynomial
time, which is impossible unless $\textsf{P}=\textsf{NP}$.
\end{proof}

Undirected signed burning without deadlines sits at the bottom of the hierarchy in
Figure~\ref{fig:burning-hierarchy}, and the two reductions leaving it preserve the objective value.
So the hardness of the two problems above it follows from
Theorem~\ref{thm1:USGBwoD_hardness}.

\begin{corollary}\label{thm1:USGBwD_hardness}
Unless $\textsf{P} = \textsf{NP}$, there is no polynomial-time exact algorithm for computing
$Y_k(\Gamma)$, given an undirected signed graph $\Gamma$ and an integer $k$ as input.
\end{corollary}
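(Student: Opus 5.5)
The plan is a value-preserving reduction from undirected signed burning without deadlines to undirected signed burning with deadlines. Given an undirected signed graph $\Gamma$ on $n$ vertices, I would output the instance $(\Gamma, k)$ with $k = |V(\Gamma)| = n$. I would then show that $Y_n(\Gamma) = Y(\Gamma)$. A polynomial-time exact algorithm for $Y_k$ would therefore compute $Y(\Gamma)$ exactly in polynomial time, contradicting Theorem~\ref{thm1:USGBwoD_hardness} unless $\textsf{P}=\textsf{NP}$.

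The key step is the identity $\mathcal{B}_{\le n}(\Gamma)=\mathcal{B}(\Gamma)$ with identical final colourings.

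First I would check that every burning sequence $\mathcal{I}\in\mathcal{B}(\Gamma)$ has length at most $n$. Each ignition burns a distinct, previously unburned vertex, so $|\mathcal{I}|\le n$.

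Next I would check that the deadline never cuts a sequence short. The deadline process stops after the ignition step of time $k=n$, or earlier once everything is burned. As long as some vertex remains unburned, every time step up to $n$ ignites one new vertex, so at least $t$ vertices are burned by the end of time $t$. Hence all $n$ vertices are burned by the end of time $n$, and the unrestricted process has already terminated by then. The two processes therefore run identically, and their final colourings coincide.

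Finally, I would note that without deadlines the paper's convention already sets $k=n$. The maxima in the definitions of $Y_n(\Gamma)$ and $Y(\Gamma)$ are thus taken over the same set of sequences with the same outcomes, which gives $Y_n(\Gamma)=Y(\Gamma)$.

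I expect the main obstacle to be the bookkeeping in the termination argument. The spread step precedes ignition, so a sequence may terminate at time $T=|\mathcal{I}|+1$ with no final ignition. I must make sure that such a final spread step still happens within the deadline. This follows because the process with $|\mathcal{I}|<n$ ignitions ends at time $|\mathcal{I}|+1\le n$. When $|\mathcal{I}|=n$, every vertex was ignited and the process ends exactly at time $n$. The reduction is clearly computable in polynomial time, so the hardness transfers.
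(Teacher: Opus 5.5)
Your proposal is correct and follows the paper's own proof: set $k=n$, note that $\mathcal{B}_{\le n}(\Gamma)=\mathcal{B}(\Gamma)$, conclude $Y_n(\Gamma)=Y(\Gamma)$, and invoke Theorem~\ref{thm1:USGBwoD_hardness}. Your extra check that the deadline never cuts off a final spread step (every vertex is burned by time $n$) is a detail the paper leaves implicit.
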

\begin{proof}
Let $\Gamma$ be an undirected signed graph on $n$ vertices and set $k=n$. Every burning sequence of
$\Gamma$ has at most $n$ entries, so $\mathcal{B}_{\le n}(\Gamma)=\mathcal{B}(\Gamma)$ and
$Y_n(\Gamma)=Y(\Gamma)$. Now apply Theorem~\ref{thm1:USGBwoD_hardness}.
\end{proof}

\begin{corollary}\label{thm1:DSGBwoD_hardness}
Unless $\textsf{P} = \textsf{NP}$, there is no polynomial-time exact algorithm for computing
$Y(\Gamma)$, given a directed signed graph $\Gamma$ as input.
\end{corollary}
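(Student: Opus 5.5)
We reduce the undirected problem without deadlines, which is hard by Theorem~\ref{thm1:USGBwoD_hardness}, to the directed problem without deadlines, and we do it in a way that preserves the objective value exactly. Given an undirected signed graph $\Gamma=(G,\sigma)$, we let $\vec\Gamma$ be the directed signed graph on the same vertex set. For every edge $uv\in E(G)$ it contains the two arcs $(u,v)$ and $(v,u)$, both carrying the sign $\sigma(uv)$. The construction clearly takes polynomial time.

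The key step is to show that the burning processes on $\Gamma$ and $\vec\Gamma$ coincide. For every vertex $v$ we have $N^{+}_{\textsf{in}}(v)=N^{+}(v)$ and $N^{-}_{\textsf{in}}(v)=N^{-}(v)$ in $\vec\Gamma$. So the yellow and green influences $s^y_t(v)$ and $s^g_t(v)$ computed in the directed spread rule equal those of Definition~\ref{defn:signed_burning} for $\Gamma$, whenever the two configurations agree at the end of time $t-1$. We then induct on $t$, in the style of Lemma~\ref{lem:switching_burning}. For any ignition sequence $\mathcal{I}$, the configurations $\nu_{\mathcal{I},t}$ on $\Gamma$ and on $\vec\Gamma$ agree for every $t$. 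Indeed, the spread step produces the same burned set and the same colours, so the same vertices remain available for ignition, and the ignition step is identical.

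Two consequences follow. First, $\mathcal{B}(\Gamma)=\mathcal{B}(\vec\Gamma)$, because both processes terminate at the same time. Second, the final colourings coincide. Since there is no deadline in either problem, Definition~\ref{defn:parameters} gives $Y(\vec\Gamma)=Y(\Gamma)$.

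Hence a polynomial-time exact algorithm for $Y$ on directed signed graphs would compute $Y(\Gamma)$ for every undirected $\Gamma$ in polynomial time. That contradicts Theorem~\ref{thm1:USGBwoD_hardness} unless $\textsf{P}=\textsf{NP}$. There is no real obstacle here. The only point needing care is that the directed termination rule, with no deadline (that is, $k=n$), is the same as the undirected one, so that the sets of burning sequences match. This holds because every sequence has at most $n$ entries.
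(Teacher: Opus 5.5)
Your proposal is correct and matches the paper's proof: you replace each undirected edge $uv$ by the two arcs $(u,v)$ and $(v,u)$ of sign $\sigma(uv)$, observe that in-neighbourhoods coincide with neighbourhoods so the two processes agree step by step and $Y$ is preserved, and then invoke Theorem~\ref{thm1:USGBwoD_hardness}. Your remark that the deadline $k=n$ is never binding is a small detail the paper leaves implicit; it holds because each time step before termination burns at least one new vertex.
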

\begin{proof}
Let $\Gamma=(G,\sigma)$ be an undirected signed graph, and let $D$ be the digraph obtained from $G$
by replacing every edge $uv$ by the two arcs $(u,v)$ and $(v,u)$, both of sign $\sigma(uv)$. Then
$N^{+}_{\textsf{in}}(v)$ and $N^{-}_{\textsf{in}}(v)$ in $(D,\sigma)$ coincide with $N^{+}(v)$ and
$N^{-}(v)$ in $\Gamma$ for every vertex $v$. So the two burning processes agree step by step, and
$Y((D,\sigma))=Y(\Gamma)$. Now apply Theorem~\ref{thm1:USGBwoD_hardness}.
\end{proof}

\section{Conclusions}
Motivated by the notion of social networks with positive and negative ties, this paper introduced signed graph burning with two colours, as well as a related parameter called the \emph{plurality number}.

We derived exact values for the plurality number for special classes of signed paths, while showing that computing or approximating it in general (in certain variants of the problem) is $\textsf{NP}$-hard. It would be interesting to extend both these types of results, for example, by finding upper and lower bounds on the plurality number of classes of signed trees, as well as by identifying non-trivial subproblems of our $\textsf{NP}$-hard problems that are solvable in polynomial time.

Finally, exploring connections between signed burning and covering radii of codes might lead to new coding-theoretic tools applicable in graph burning.

\section{Acknowledgements}

S.\ Fallat was supported in part by an NSERC Discovery Grant, application no.\ RGPIN-2025-05272.
K.\ Khodamoradi was supported through the NSERC Discovery Grants program, application no.\
RGPIN-2024-06360.
S.\ Zilles was supported through a Canada CIFAR AI Chair at the Alberta Machine Intelligence Institute (Amii), through an NSERC Canada Research Chair, through the New Frontiers in Research Fund under grant no.\ NFRFE-2023-00109 and through the NSERC Discovery Grants program under application no.\ RGPIN-2026-05856.

\section{AI Disclosure}
The proof of Theorem \ref{thm1:USGBwoD_hardness} as well as some parts of the proof of Theorems \ref{thm:path-lower} and \ref{thm:upper} were produced in collaboration with Claude Opus 5.5. All mathematical arguments and results were independently verified by the authors, who take full responsibility for the content of the paper. All other proofs, as well as the signed burning model and related parameters are devised by the four authors.

\end{document}